\numberwithin{equation}{section}
\newtheorem{Theorem}{Theorem}[section]
\newtheorem{Lemma}[Theorem]{Lemma}
\newtheorem{Proposition}[Theorem]{Proposition}
\newtheorem{Assumption}{H.\!\!}
\newtheorem{Definition}{Definition}[section]
\newtheorem{Remark}{Remark}[section]
 \def\p{\partial} 
\def\to{\rightarrow}
\def\Om{\Omega}   %\def\I{ {\rm (I) } }
\newcommand{\q}{\quad}   \newcommand{\qq}{\qquad}
  \def\fa{\forall}
  \def\a{\alpha} 
\def\eps{\varepsilon}
 \def\t{\times}
\def\cF{\mathcal{F}}
\def\cH{\mathcal{H}}
\def\cO{\mathcal{O}}
\def\cS{\mathcal{S}}
\def\d{{\mathrm{d}}}
\def\sE{{\mathbb{E}}}
\def\sF{{\mathbb{F}}}
\def\sN{{\mathbb{N}}}
\def\sP{\mathbb{P}}
\def\sR{{\mathbb R}}
\def\sS{{\mathbb{S}}}
\def\sX{{\mathbb{X}}}
\def\sY{{\mathbb{Y}}}
\def\trans{\mathsf{T}}
\newcommand{\tr}{\textnormal{tr}}
\DeclareMathOperator*{\argmin}{arg\,min}
\newcommand{\lc}
{\mathrel{\raise2pt\hbox{${\mathop<\limits_{\raise1pt\hbox
{\mbox{$\sim$}}}}$}}}
\newcommand{\gc}
{\mathrel{\raise2pt\hbox{${\mathop>\limits_{\raise1pt\hbox{\mbox{$\sim$}}}}$}}}
\newcommand{\ec}
{\mathrel{\raise2pt\hbox{${\mathop=\limits_{\raise1pt\hbox{\mbox{$\sim$}}}}$}}}
\def\bb{\begin{equation}} \def\ee{\end{equation}}
\def\bbn{\begin{equation*}} \def\een{\end{equation*}}
\def\beqn{\begin{eqnarray}}  \def\eqn{\end{eqnarray}}
\def\beqnx{\begin{eqnarray*}} \def\eqnx{\end{eqnarray*}}
\def\bn{\begin{enumerate}} \def\en{\end{enumerate}}
\def\bd{\begin{description}} \def\ed{\end{description}}
\begin{document}

\title{
Logarithmic Regret  for
Episodic 
Continuous-Time
Linear-Quadratic Reinforcement Learning
over a Finite-Time  Horizon }

%\author{\name Marina Meil\u{a} \email mmp@stat.washington.edu \\
%       \addr Department of Statistics\\
%       University of Washington\\
%       Seattle, WA 98195-4322, USA
%       \AND
%       \name Michael I.\ Jordan \email jordan@cs.berkeley.edu \\
%       \addr Division of Computer Science and Department of Statistics\\
%       University of California\\
%       Berkeley, CA 94720-1776, USA}
       
\author{\name Matteo Basei \email  matteo.basei@edf.fr\\
\addr EDF R\&D and FIME, Paris, France. \\
\AND
\name Xin Guo \email  xinguo@berkeley.edu\\
\addr Department of Industrial Engineering and Operations Research\\
University of California, Berkeley, USA.  \\
\AND
\name Anran Hu \email  anran\_hu@berkeley.edu\\
\addr Department of Industrial Engineering and Operations Research\\
 University of California, Berkeley, USA. \\
\AND
\name Yufei Zhang \email  yufei.zhang@maths.ox.ac.uk\\
\addr Mathematical Institute\\
University of Oxford, UK. 
}

%\editor{Kevin Murphy and Bernhard Sch{\"o}lkopf}
\editor{}

%\title{
%Logarithmic regret  for
%episodic 
%continuous-time
%linear-quadratic reinforcement learning
%over a finite-time  horizon }
%
%
%\author{
%Matteo Basei\thanks{EDF R\&D Department, Paris, France. \textbf{Email}: \texttt{matteo.basei@edf.fr}}
%\and Xin Guo\thanks{Department of Industrial Engineering and Operations Research, University of California, Berkeley, USA. \textbf{Email}: \texttt{xinguo@berkeley.edu}}
%\and Anran Hu\thanks{Department of Industrial Engineering and Operations Research, University of California, Berkeley, USA. \textbf{Email}: \texttt{anran\_hu@berkeley.edu}}
%\and Yufei Zhang\thanks{Mathematical Institute, University of Oxford, UK. \textbf{Email}: \texttt{yufei.zhang@maths.ox.ac.uk}}
%}
%
%\author{
%\and Xin Guo\thanks{Department of Industrial Engineering and Operations Research, University of California, Berkeley, USA. \textbf{Email}: \texttt{xinguo@berkeley.edu}}
%\and Anran Hu\thanks{Department of Industrial Engineering and Operations Research, University of California, Berkeley, USA. \textbf{Email}: \texttt{anran\_hu@berkeley.edu}}
%\and Yufei Zhang\thanks{Mathematical Institute, University of Oxford, UK. \textbf{Email}: \texttt{yufei.zhang@maths.ox.ac.uk}}
%}
\date{}
\maketitle

\begin{abstract}%
We study finite-time horizon continuous-time linear-quadratic reinforcement learning problems
in an episodic setting,
where both  the state and control coefficients are unknown to the controller. 
We first propose a least-squares  algorithm based on continuous-time observations and controls,
 and establish a logarithmic 
regret bound of magnitude  $\cO((\ln M)
(\ln\ln M) )$, with $M$ being the number of learning episodes.
The analysis consists of two components:  perturbation analysis, which exploits the regularity and  robustness
of the associated Riccati differential equation;
and parameter
estimation error, which relies on  sub-exponential properties of  continuous-time least-squares estimators.
We further  propose a practically implementable least-squares  algorithm based on discrete-time observations and piecewise constant controls,
 which achieves similar logarithmic  regret 
 with an additional  term 
 depending explicitly on  the  time stepsizes used in the algorithm.
\end{abstract}
%
 %\tableofcontents
%
%\medskip
%\noindent
%\textbf{Key words.} 
%%  Continuous-time reinforcement learning,  
%%%episodic learning,
%% linear-convex,  jump-diffusion, Lipschitz stability,
%%  least-squares estimation, sub-Weibull random variable 
%
%%%
%%%
%\ms
%\noindent
%\textbf{AMS subject classifications.} 
%93E35, 62G35, 93E24, 68Q32

%	93E35  	Stochastic learning and adaptive control
% 	62G35  	Nonparametric robustness
%	93E24  	Least squares and related methods for stochastic control systems
%	68Q32  	Computational learning theory 

\medskip

\section{Introduction}\label{sec:intro}
Reinforcement learning (RL) for  linear quadratic (LQ) control problems  has been one of the most active areas  for both the control and the reinforcement learning communities. Over the last few decades,  significant progresses have been made in the discrete-time setting. 
\subsection{Discrete-time RL}
\label{sec:discrete_time_rl}
%\paragraph{Adaptive control.} 
{\color{black}
In the area  of adaptive control  %\cite{sastry1990adaptive,aastrom2013adaptive} 
with unknown dynamics parameters, the goal  is to find optimal stationary policy that stabilizes the unknown dynamics and minimizes the long term average cost (\cite{ioannou2006adaptive,landau2011adaptive}).
For an infinite-time horizon LQ system, it has been shown that persistent excitation conditions \cite{green1986persistence} are critical to the parameter identification. 
Meanwhile, algorithms with asymptotic convergence in both the parameter estimation and the optimal control have been developed in \cite{goodwin1981discrete}, \cite{kumar1983optimal} and \cite{campi1998adaptive}: the first one assumes that costs only depend on state variables and the other two consider both state and control costs and use a cost-biased least-squared estimation method. 
See
\cite{faradonbeh2018finite,faradonbeh2019randomized}
 and references therein for recent developments of (randomised) adaptive control algorithms for LQ systems.
}

%\paragraph{Regret analysis in discrete time.} 
Following the seminal works of  
 \cite{auer2007logarithmic, auer2009near} and \cite{osband2013more}, 
 \textit{non-asymptotic} regret bound analysis for RL algorithms
has been one of the main topics, and has been developed for tabular  Markov decision problems.

%\paragraph{Non-asymptotic sub-linear regret.} 
The non-asymptotic analysis of adaptive LQ problem %was first studied in
by \cite{abbasi2011regret} utilizes the Optimism in the Face of Uncertainty   principle
to construct a sequence of improving confidence regions for the unknown model parameters, and  solves a non-convex constrained optimization problem for   each  confidence region; their algorithm  achieves an $ \cO(\sqrt{T})$ regret bound, with $T$ being  
the  number of time steps. To reduce the computational complexity and to avoid the non-convexity issue, \cite{abeille2018improved} and \cite{ouyang2017learning} propose Thompson-sampling-based algorithms and derive $\cO(\sqrt{T})$ regret bounds in the Bayesian setting; \cite{Robust_adacont} proposes a robust adaptive control algorithm to solve a convex sub-problem in each step and achieves an $\cO(T^{2/3})$ regret bound.
 The gap between these regret bounds is removed by \cite{certainty_ben} and \cite{cohen2019learning}  via two different approaches for the same $\cO(\sqrt{T})$ frequentist regret bound.  Later, \cite{simchowitz2020naive} establishes a lower bound on the regret of order $\cO(\sqrt{d_u^2d_xT})$, where $d_u$ and $d_x$ are the dimensions of the actions and the states, and shows that a simple
variant of certainty equivalent control matches the lower bound in both $T$ and the dimensions. Similar regret bounds have also been established under different settings and assumptions, such as \cite{chen2021black} in the adversarial setting and \cite{lale2020explore} without a stabilizing controller at the early stages of agent-environment interaction. 

{\color{black}
All the analyses are in discrete-time with an infinite time horizon.
In all these  problems,  adaptive control algorithms are shown to achieve logarithmic regret bounds when additional information regarding the parameters of the system 
(often referred to as identifiability conditions)
 is available.
Indeed,
\cite{kazem2018adaptive,faradonbeh2020input} prove that certainty equivalent adaptive regulator
achieves logarithmic regret bounds
if the system parameter satisfies certain sparsity or low-rankness conditions. \cite{cassel2020logarithmic} establishes logarithmic regret bounds when either the state transition matrix is unknown, or when the state-action transition matrix is unknown and the optimal policy is non-degenerate. In partially observable linear dynamical systems, which takes linear-quadratic Gaussian problem as a special case, \cite{lale2020logarithmic} proposes an algorithm with a logarithmic regret bound, under the assumption that one has access to a set in which all controllers persistently excite the system to  approximate the optimal control. Logarithmic regret bounds in the adversarial setting with known dynamics parameters have been established in \cite{agarwal2019logarithmic,foster2020logarithmic}. 
}

\subsection{Continuous-time RL.}
Most real-world control systems, such as those in aerospace, automotive industry and robotics,
are naturally  continuous-time dynamical systems.
% See OpenAI Gym \cite{brockman2016openai} and DeepMind Control Suite \cite{tassa2018deepmind} for several representative
% examples of such physical tasks.
{\color{black}
So are their related physical tasks, such as inverted pendulum problems,
cart-pole balancing problems, and legged robot problems. 
Continuous-time finite-time horizon LQ control problems can be found in 
  portfolio optimization \cite{wang2020continuous},  algorithmic trading \cite{cartea2018algorithmic}, 
 production management of 
 exhaustible resources \cite{graber2016linear}, and biological movement systems
\cite{bailo2018optimal}.
}

Analysis for continuous-time LQ-RL and general RL problems, however,  is fairly limited.
The primary approach is to develop learning algorithms after discretizing both the time and the space spaces, and establish the convergence as discretization parameters tend to zero. For instance, \cite{munos2006policy} proposes a policy gradient
algorithm and shows the convergence of the policy gradient estimate to the true gradient.
\cite{munos1998reinforcement, munos2000study} 
design  learning algorithms
 by discretizing Bellman  equations of the underlying control problems
 and prove the  asymptotic convergence of their algorithms. 
For the  LQ
system, attentions have been mostly on algorithms designs, including the integral reinforcement learning algorithm in \cite{modares2014linear}, and the policy iteration algorithm in \cite{rizvi2018output}. Yet, very little 
 is known regarding the convergence rate or the regret bound of all these algorithms.
 Indeed, despite the natural analytical connection between LQ control and RL, the best known theoretical
work for continuous-time LQ-RL is still due to \cite{duncan1999adaptive}, where an asymptotically sublinear regret for  an ergodic model has been derived via a weighted least-squares-based estimation approach.
Nevertheless, the exact order of the regret bound has not been studied.

\paragraph{Issues and challenges from {non-asymptotic analysis}.}%continuous-time LQ-RL.}

% As agents in a continuous-time environment can only make  observations and take actions at  discrete time points,

{\color{black}
It is insufficient and improper to rely solely on the analysis and algorithms  for the discrete-time RL to solve the continuous-time problems. There is a mismatch between the algorithms
timescale for the former and the underlying systems timescale for the latter.
When designing algorithms that make  observations and take actions at  discrete time points,
  it is important to take the model mismatch into consideration.
For instance, the empirical studies in \cite{tallec2019making} suggest  that
 vanilla 
  $Q$-learning methods exhibit degraded performance
  as the time stepsize decreases, 
  while 
   a proper scaling of learning rates with stepsize 
% a learning algorithm with well-defined continuous-time limit
  leads to more robust performance.

The questions are therefore: A) How to  quantify the precise impacts of 
%such mismatch %the reaction time
the observation stepsize 
and action stepsize
on algorithm performance? B)  How to derive non-asymptotic regret analysis for learning algorithms in continuous-time LQ-RL (or general RL) system,
analogous to the discrete-time LQ-RL counterpart?
}

 There are technical reasons behind the limited   theoretical progress in the continuous-time domain for RL, including LQ-RL. 
 In addition to the known difficulty   for analyzing
 stochastic control problems, the learning component  compounds the problem complexity and poses new challenges. 
 
For instance, the counterpart in the continuous-time problem  to  the algebraic equations in \cite{certainty_ben} for the discrete-time version is the regularity and stability of the continuous-time
Riccati  equation and the regularity of feedback controls. While Riccati equation and its robustness and existence and uniqueness of optimal controls have been well studied in the control literature,
regularity of feedback controls
with respect to underlying models
is completely new for control theory and crucial for algorithm design and its robustness analysis. 
Moreover, deriving the {\it exact}
order of the regret bound requires developing new and different  techniques than  those used for the  {\it asymptotic} regret analysis in \cite{duncan1999adaptive}.
  
% \todo{Move the discretization and mismatch argument here? In addition, XXX} 
% {\color{brown} In addition, in practice, agents in a continuous-time environment can only make  observations and take actions at  discrete time points. }

\paragraph{Our work and contributions.}
This paper studies  finite-time horizon continuous-time LQ-RL problems  in an episodic setting.
 \begin{itemize}[leftmargin=*]
\item

 It  first proposes a   greedy least-squares   algorithm
 based on  continuous-time observations and controls. 
 At each iteration,
the algorithm   estimates the unknown 
parameters by a regularized least-squares estimator based on observed trajectories,
then designs
linear feedback controls via the Riccati differential equation
for  the estimated model.
% if and only if the coefficient matrix of the control variable is full column rank
It identifies conditions under which the unknown state transition matrix and state-action transition matrix  are uniquely identifiable 
under the optimal policies. 
(Remark \ref{rmk:identificable}
and Proposition \ref{prop:non_degenerate_statement}).
By exploiting the identifiability of coefficients, 
this continuous-time least-squares  algorithm is shown to have a 
logarithmic  regret 
of the magnitude $\cO((\ln M)(\ln\ln M) )$, with $M$ being the number of learning episodes
(Theorem \ref{thm:regret_continuous}).
 To the best of our knowledge, this is the first  non-asymptotic logarithmic regret bound for continuous-time LQ-RL problems
% and the first  regret bound for LQ-RL problems
with unknown   state and control coefficients.

\item 
 It  then proposes a   practically implementable  least-squares   algorithm
 based on  discrete-time observations and controls. 
 At each iteration,
 the algorithm estimates 
the unknown  parameters
by  observing  continuous-time trajectories
at discrete time points,
then designs 
 a piecewise constant 
linear feedback control  via  Riccati difference equations
for an associated discrete-time LQ-RL problem.
It shows that 
  the regret of the discrete-time least-squares  algorithm is 
of the magnitude $\cO\big((\ln M)(\ln\ln M)+\sum_{\ell=0}^{\ln M} 2^\ell \tau_\ell^{2}\big)$, where
$\tau_\ell$ is the  time stepsize used in the $(\ell+1)$-th update of model parameters
(Theorem \ref{thm:regret_discrete}).
{\color{black}
Our analysis shows that  scaling the regularization parameter of  the discrete-time least-squares estimator with respect to time stepsize is critical for a robust performance of the algorithm in different timescales
(Remark \ref{rmk:scaling}).}
 To the best of our knowledge, this is the first discrete-time algorithm with rigorous regret bound for  continuous-time LQ-RL problems.

\end{itemize}

Different from the    least-squares algorithms  for the 
 ergodic  LQ problems (see e.g., \cite{duncan1999adaptive,certainty_ben}), 
   our continuous-time least-squares algorithm constructs feedback controls via Riccati differential 
   equations instead of  
   the algebraic equations in \cite{certainty_ben}.
 Here, the regularity and stability of the continuous-time
Riccati  equation is analyzed in 
 order to establish the robustness of feedback controls.
 
Moreover, our analysis for the estimation error exploits extensively the sub-exponential tail behavior of the least-squares estimators. This probabilistic approach differs from the asymptotic sublinear regret analysis in \cite{duncan1999adaptive}; it %enables us to 
establishes the exact
order of the logarithmic regret bound by the concentration inequality for the error bound.

{\color{black}
 In addition,  our   analysis
also 
 exploits an important self-exploration property of finite-time horizon
 continuous-time 
 LQ-RL problems, for which
the  time-dependent optimal feedback matrices  %for finite-time horizon problems
ensure that  the optimal state and control processes span the entire parameter space.
This property allows us to design exploration-free learning algorithms
with  logarithmic regret bounds.
%Such a  property in general does not hold for infinite-time horizon problems due to the stationarity of feedback policies.
Furthermore,
we  provide explicit conditions on models that guarantees the successful identification of the unknown parameters
 with optimal feedback  policies.
This is in contrast to the   identification conditions
  for logarithmic regret bounds in discrete-time infinite-time-horizon LQ problems. 
 Our conditions apply to arbitrary finite time-horizon problems, without
 imposing sparsity or low-rankness conditions on system parameters as in 
\cite{kazem2018adaptive,faradonbeh2020input}
 or requiring these parameters to be partially known to the controller as in 
  \cite{cassel2020logarithmic,foster2020logarithmic}.
}

Finally,  our analysis 
 provides the precise parameter estimation error in terms of the sample size and time stepsize, and
 quantifies the performance gap between applying a  piecewise-constant policy from an incorrect model
and applying the optimal policy. The
misspecification error 
 scales  linearly with respect to the stepsize, and the performance gap
 depends quadratically with respect to  the time stepsize and the magnitude of  parameter perturbations. 
Our analysis is based on the first-order convergence of Riccati difference equations and a uniform sub-exponential tail bound of discrete-time least-squares estimators.

\paragraph{Notation.} 

For each  $n\in \sN$, we denote by $I=I_n$ the $n\t n$ identity matrix,
and
by $\sS^n_0$ (resp.~$\sS^n_+$) the space of symmetric positive semidefinite (resp.~definite) matrices.
We denote by $|\cdot|$ the  Euclidean norm of a given Euclidean space, 
by $\|\cdot\|_2$ the matrix norm induced by Euclidean norms,
and 
 by $A^\top$ and $\tr(A)$ the transpose  and trace of a matrix $A$, respectively.
For each $T> 0$, filtered probability space $(\Om,\cF,\sF=\{\cF_t\}_{t\in[0,T]},\sP)$ 
 satisfying the usual condition and  Euclidean space $(E,|\cdot|)$, we introduce the following spaces:
 \begin{itemize}[leftmargin=*,noitemsep,topsep=0pt]
% \item 
%  $C([0,T];E)$ (resp.~$C^1([0,T];E)$) 
%  is the space of continuous (resp.~continuously differentiable) functions
%  $\phi:[0,T]\to E$ satisfying 
%  $\|\phi\|_{C([0,T];E)}=\sup_{t\in [0,T]}|\phi_t|<\infty$
%  (resp.~$\|\phi\|_{C^1([0,T];E)}=\sup_{t\in [0,T]}(|\phi_t|+|\phi'_t|)<\infty$.
\item 
 $C([0,T];E)$ is the space of continuous  functions
 $\phi:[0,T]\to E$ satisfying 
 $\|\phi\|_{C([0,T];E)}=\sup_{t\in [0,T]}|\phi_t|<\infty$;
 \item
 $C^1([0,T];E)$ is the space of continuously differentiable  functions
 $\phi:[0,T]\to E$ satisfying 
 $\|\phi\|_{C^1([0,T];E)}=\sup_{t\in [0,T]}(|\phi_t|+|\phi'_t|)<\infty$;
\item
 $\cS^2(E)$
  is the space of 
 $E$-valued
$\sF$-progressively  measurable c\`{a}dl\`{a}g
processes
$X: \Om\t [0,T]\to E$ %$Y: [t,T]\t \Om\to E$ 
satisfying $\|X\|_{\cS^2(E)}=\sE[\sup_{t\in [0,T]}|X_t|^2]^{1/2}<\infty$;
\item
 $\cH^2(E)$  is the space of 
   $E$-valued $\sF$-progressively measurable
 processes 
$X: \Om\t [0,T]\to E$ 
 satisfying $\|X\|_{\cH^2(E)}=\sE[\int_0^T|X_t|^2\,\d t]^{1/2}<\infty$.
\end{itemize}
For notation simplicity, we denote by $C\in [0,\infty)$ a generic constant, which depends only on the constants appearing in the assumptions and may take a different value at each occurrence.

\section{Problem formulation and main results}

\subsection{Linear-quadratic reinforcement learning problem}
In this section, we consider  the linear-quadratic reinforcement learning (LQ-RL) problem,
where the drift coefficient of the state dynamics 
is unknown to the controller.
%and the objective is to control the system optimally
% while simultaneously 
% learning the dynamics.
%
%

More precisely, 
let 
 $T\in (0,\infty)$ be a given terminal time,
 $W$ be an $n$-dimensional  standard Brownian motion defined on a complete probability space $(\Omega,\mathcal{F},\mathbb{P})$, 
and  $\sF=(\cF_t)_{t\in [0,T]}$ be the  filtration
generated by $W$   augmented by the $\sP$-null sets.
Let 
$x_0\in \sR^n$ be a given initial state
and 
$(A^\star, B^\star)
\in \sR^{n\t n}\t \sR^{n\t d}$
be
fixed but unknown matrices,
 consider the following problem:
\bb\label{LQ}
%V(x_0;\theta^\star)=
\inf_{U\in \cH^2 (\sR^d)} J^{\theta^\star}(U),
\q \textnormal{with}\q 
J^{\theta^\star}(U)=
\sE\left[
\int_0^T\big((X^{\theta^\star,U}_t)^\top QX^{\theta^\star,U}_t+(U_t)^\top RU_t\big)\,\d t
\right],
\ee
where for each $U\in  \cH^2(\sR^d)$, the process $X^{\theta^\star,U}\in \cS^2(\sR^n)$ satisfies the following controlled dynamics
 associated with the parameter $\theta^\star=(A^\star, B^\star)^\top$: 
\bb\label{dynamics}
\d X_t =(A^\star X_t+B^\star U_t)\,\d t+\, \d W_t, \q t\in [0,T];
\q X_0=x_0,
\ee
with 
given matrices $Q\in \sS^n_0$ and $R\in \sS^d_+$.
Note that 
we assume the  loss functional 
\eqref{LQ} only involves 
a time homogeneous 
running cost
to allow a direct comparison with  infinite-time horizon 
RL problems (see e.g., \cite{duncan1992least}), 
but similar analysis can be performed 
if the cost functions are time inhomogeneous, 
 a terminal cost is included,
or the Brownian motion $W$ in \eqref{dynamics} is scaled by an known nonsingular diffusion matrix.

If the  parameter $\theta^\star=(A^\star, B^\star)^\top$ are known to the controller,
then \eqref{LQ}-\eqref{dynamics}
reduces  to the classical   LQ control problems.
In this case, 
it is well known that (see e.g., \cite{yong1999stochastic} and the references therein),  the optimal control $U^{\theta^\star}$ 
of \eqref{LQ}-\eqref{dynamics} 
is given in a  feedback form by
\begin{equation}
\label{K1}
U^{\theta^\star}_t=\psi^{\theta^\star}(t, X^{\theta^\star}_t), \q
\textnormal{
with $\psi^{\theta^\star}(t,x)=
K^{\theta^\star}_t x$, $\fa (t,x)\in [0,T]\t \sR^n$,}
\end{equation}
where $K^{\theta^\star}_t= -R^{-1}(B^{\star})^\top P^{\theta^\star}_t$ for all $t\in [0,T]$,
 $(P_t^{\theta^\star})_{t\in [0,T]}$ solves the Riccati equation
\begin{equation}
\label{riccati1}
%\begin{cases}
\tfrac{\d}{\d t}P_t + (A^\star)^\top P_t + P_tA^\star  - P_t(B^\star R^{-1}(B^\star)^\top )P_t + Q=0, \q t \in [0,T];
\q
P_T=0,
%\end{cases}
\end{equation}
and $X^{\theta^\star}$ is the   state process governed by the following dynamics: 
%associated with $\theta^\star$:
\bb\label{eq:state_psi_theta}
\d X_t =(A^\star X_t+B^\star K^{\theta^\star}_t X_t)\,\d t+\, \d W_t, \q t\in [0,T];
\q X_0=x_0.
\ee

To solve  the LQ-RL problem 
 \eqref{LQ}-\eqref{dynamics}
 with unknown $\theta^\star$,
 the controller  searches for the optimal control while simultaneously learning the system, i.e., the matrices $A^\star, B^\star$.
 In an episodic (also known as reset or restart) learning framework, the controller improves her knowledge of the underlying  dynamics $X_t$ through successive learning episodes, in order to find a  control that is close to the optimal one.

Mathematically, it goes as follows. Let $M\in\mathbb{N}$ be the total number of learning episodes. In the $i$-th learning episode, $i=1,\ldots,M$, a feedback control $\psi^i$ is exercised, and  the  state process  $X^{\psi^i}$ evolves according to  the dynamics \eqref{dynamics} controlled by  the policy  $\psi^i$:
\begin{equation}\label{dynamics-n}
    \begin{split}
        \d X_t&=(A^\star X_t+B^\star \psi^i(t,X_t))\d t+\d W_t^i,
        \q t\in [0,T];
         \quad X_0=x_0. \\
    \end{split}
\end{equation}
 Here $W^i ,i =1,2,\ldots, M$ are independent $n$-dimensional Brownian motions defined on the same  probability space $(\Omega,\mathcal{F},\mathbb{P})$.
The (expected) cost of learning in the $i$-th episode is then given by
\begin{equation}
\label{J-n}
J^{\theta^\star}(U^{\psi^i}) = \mathbb{E} \left[\int_0^T
\big(
(X^{\psi^i}_t)^\top QX^{\psi^i}_t+(U^{\psi^i}_t)^\top R U^{\psi^i}_t\big)\, \d t\right],
\;
\textnormal{with $U^{\psi^i}_t\coloneqq \psi^i(t,X^{\psi^i}_t)$,  $t\in [0,T]$,}
\end{equation}
and 
the (expected) regret of learning 
up to $M \in \sN$ episodes  (with the sequence of controls $(U^{\psi^i})_{i=1}^{M}$)
is defined  as follows:
\begin{equation}\label{reg}
    R(M)=\sum_{i=1}^M \Big(J^{\theta^\star}(U^{\psi^i})-J^{\theta^\star}(U^{\theta^\star})\Big),
\end{equation}
where $J^{\theta^\star}(U^{\theta^\star})$ is the optimal cost 
of \eqref{LQ}-\eqref{dynamics}
 when $A^\star, B^\star$ are known. 
 Intuitively, the regret characterizes the cumulative loss from taking sub-optimal policies in all episodes.

In the following, we shall propose several   least-squares-based learning algorithms 
to solve \eqref{LQ}-\eqref{dynamics},
and prove that they achieve   logarithmic   regrets if  $\theta^{\star}$ is identifiable
(see Remark \ref{rmk:identificable} for details).

\subsection{Continuous-time least-squares algorithm and its regret bound}
\label{sec:conts_ls}

In this section, we consider a 
continuous-time least-squares algorithm, 
which chooses the optimal feedback control based on the current estimation of the parameter, and 
updates the parameter estimation based on the whole trajectories of the state dynamics.

More precisely, 
let  
$\theta =(A,B)^\top\in \sR^{(n+d)\t n}$
be the current estimate of the unknown parameter $\theta^\star $,
then the controller would exercise the optimal feedback control $\psi^\theta$ 
 for   \eqref{LQ}-\eqref{dynamics} 
 with $\theta^\star$ replaced by $\theta$, i.e.,
\begin{equation}
\label{K2nonoise}
 \psi^\theta(t,x)= K^\theta_t x, 
 \q   K^\theta_t\coloneqq -R^{-1} B^\top  P^\theta_t, \q \fa (t,x)\in [0,T]\t \sR^n,
\end{equation}
where  $ P^\theta$ satisfies   the Riccati equation \eqref{riccati1}
  with $\theta^\star$ replaced by $\theta$:
\begin{equation}
\label{riccati2}
%\begin{cases}
\tfrac{\d}{\d t}P_t + A^\top P_t + P_tA   - P_t(B  R^{-1}B ^\top )P_t + Q=0, \q t \in [0,T];
\q
P_T=0.
%\end{cases}
\end{equation}
This leads to the  state process $X^{\psi^\theta}$ satisfying 
(cf.~\eqref{dynamics-n}):
\begin{equation}\label{lin_dyn}
   \d  X_t=
   (A^\star X_t+   B^\star \psi^\theta(t, X_t))\,\d t+\d W_t,
   \q t\in[0,T];
    \quad X_0=x_0. 
\end{equation}

We proceed to derive 
an $\ell_2$-regularized least-squares estimation for $\theta^\star$ based on sampled trajectories 
of $X^{\psi^\theta}$.
Observing from \eqref{lin_dyn} that 
 $$ Z^{\psi^\theta}_t(\d X^{\psi^\theta}_t)^\top =Z^{\psi^\theta}_t (Z^{\psi^\theta}_t)^\top \theta^\star \d t+
  Z^{\psi^\theta}_t (\d W_t)^\top,
\q 
\textnormal{
with
$Z^{\psi^\theta}_t=
\begin{psmallmatrix} X^{\psi^\theta}
\\
\psi^{\theta}(t,X^{\psi^\theta}_t)
 \end{psmallmatrix}
$ for all $t\in [0,T]$.
}
 $$
Hence the martingale property of the It\^{o} integral implies that 
\begin{equation}
\label{rewrTheta}
\theta^\star = \left(\mathbb{E}\left[\int_0^T Z^{\psi^\theta}_t (Z^{\psi^\theta}_t)^\top\, \d t\right]\right)^{-1} \mathbb{E}\left[\int_0^T Z^{\psi^\theta}_t(\d X^{\psi^\theta}_t)^\top \right],
\end{equation}
provided that $\mathbb{E}\big[\int_0^T Z^{\psi^\theta}_t (Z^{\psi^\theta}_t)^\top\, \d t\big]$ is invertible. 
This 
 suggests a practical rule to improve one's estimate $\theta$ for the true parameter
$\theta^\star$, 
by replacing the expectations in  \eqref{rewrTheta}
with  empirical averages over independent realizations.
More precisely, let 
$m\in \sN$ and 
$(X^{\psi^\theta,i}_t, \psi^\theta(t,X^{\psi^\theta,i}_t))_{t\in [0,T]}$, $i=1,\ldots,m$,
be trajectories of $m$ independent  realizations of the state and control processes,
we shall update the estimate $\theta$  by the following rule, inspired by \eqref{rewrTheta}:
\bb
\label{reg_ls}
\theta
\longleftarrow
\bigg(
\frac{1}{m}\sum_{i=1}^m
\int_0^T
Z^{\psi^\theta,i}_t (Z^{\psi^\theta,i}_t)^\trans
\,\d t
+\frac{1}{m}I
\bigg)^{-1}
\bigg(
\frac{1}{m}\sum_{i=1}^m
\int_0^T
Z^{\psi^\theta,i}_t (\d X^{\psi^\theta,i}_t)^\trans
\bigg),
\ee
where 
 $Z^{\psi^\theta,i}_t\coloneqq
\begin{psmallmatrix} X^{\psi^\theta,i}_t \\ \psi^\theta(t,X^{\psi^\theta,i}_t)
\end{psmallmatrix}
$
for all $t\in [0,T]$ and $i=1,\ldots,m$,
and 
$I$ is the $(n+d)\t (n+d)$ identity matrix.

The regularization term $\frac{1}{m}I$ in \eqref{reg_ls}   guarantees the required matrix inverse  and  vanishes as $m \to \infty$. 
The estimator \eqref{reg_ls} can be equivalently  expressed as  an $\ell_2$-regularized least-squares estimator, as pointed out in \cite{duncan1992least} for the ergodic LQ-RL problem.

% {\color{black}\sout{In fact, let 
%  ${\psi^\theta}$  be a given
% policy,  and define the following $\ell_2$-regularized least-squares estimation error
% for $m$ episodes:}
% \begin{equation}
% \cE(\theta)=\sum_{i=1}^m
% \bigg(
% \int_0^T(\dot{ X}^{\psi^\theta,i}_t-\theta^\top Z^{\psi^\theta,i}_t)^\top (\dot{ X}^{\psi^\theta,i}_t-\theta^\top Z^{\psi^\theta,i}_t)    
% \,\d t
% \bigg)
% +
%  \tr(\theta^\top \theta),
% \q 
% \fa \theta\in \sR^{(n+d)\t n},
% \end{equation}
% \sout{with $\dot{ X}^{\psi^\theta,i}_t\d t=\d { X}^{\psi^\theta,i}_t$ for all $i$, 
% then one can easily verify that 
% the estimator   \eqref{reg_ls} 
% minimizes the above estimation error  over all
% $\theta\in \sR^{(n+d)\t n}$.}
% }
 
We summarize the continuous-time least-squares algorithm as follows.

\begin{algorithm}[H]
  \caption{{Continuous-time least-squares  algorithm}}
  \label{alg:conts_ls}
\begin{algorithmic}[1]
  \STATE \textbf{Input}: 
  Choose an initial estimation  
 $ \theta_0$ of $\theta^\star$
 and numbers of learning episodes $\{m_\ell\}_{\ell\in \sN\cup\{0\}}$. 
% the Number of episodes in each iteration $m_j>0$ for $j=0,1,\dots$.%exploration sequence $\sigma_j>0$ ($j\geq 0$).

 \FOR {$\ell=0, 1, \cdots$}
  \STATE 
  Obtain the   feedback control $ \psi^{ \theta_\ell}$
  as \eqref{K2nonoise} with $\theta=\theta_\ell$.
 % \STATE Solve \eqref{eq:lc_fbsde} with $A^\star$, $B^\star$ and $\phi^\star$ replaced by $\hat{A}$, $\hat{B}$ and $\hat{\phi}$, respectively, to get $(\hat{X},\hat{Y},\hat{Z})$.
  \STATE  Execute the feedback control $\psi^{ \theta_\ell}$ for $m_\ell$ independent episodes, 
   and collect the trajectory data
   $(X^{\psi^{ \theta_\ell},i}_t, \psi^{\theta_\ell}(t,X^{\psi^{ \theta_\ell},i}_t))_{t\in [0,T]}$, $i=1,\ldots,m_\ell$.

    \STATE Obtain an updated estimation ${\theta}_{\ell+1}$ by
    using  \eqref{reg_ls} and the $m_\ell$ trajectories collected above.
\ENDFOR 
\end{algorithmic}
\end{algorithm}

{\color{black}
Note that Algorithm \ref{alg:conts_ls} operates in cycles, with $m_\ell$ the number of episodes in the $\ell$-th cycle.
Hence,
%for each $M\in\sN$,
the regret of learning up to $M$ episodes (cf.~\eqref{reg})
can be upper bounded by the accumulated 
regret at the end of the $L$-th cycle, 
where $L$ is the smallest integer such that 
$\sum_{\ell=0}^L m_\ell\ge M$.

% Notice that under this framework, when we calculate the accumulated regret for $M$ episodes for any given number $M$, the $M$ episodes are split into different iterations where in iteration $\ell=0,1,\dots,L$, there are $m_\ell$ episodes (\textit{i.e.}, from episode $\sum_{j=0}^{\ell-1} m_j+1$ to episode $\sum_{j=0}^{\ell}m_j$). The exact choices of the learning episodes will be specified later in Theorem \ref{thm:regret_continuous}.
}

In this section, we analyze the regret of Algorithm \ref{alg:conts_ls} based on the following assumptions of the learning problem \eqref{LQ}-\eqref{dynamics}.

\begin{Assumption}
\phantomsection
\label{assum:ls}

\begin{enumerate}[(1)]
\item 
\label{assum:pd}
$T\in (0,\infty)$,  $n,d\in \sN$,
$x_0\in \sR^n$,
$A^\star\in \sR^{n\t n}$,
$B^\star\in \sR^{n\t d}$,
$Q\in \sS^n_0$ and $R\in \sS^d_+$.
\item 
{\color{black}
\label{assum:non_degenerate}
  $\{v\in \sR^d\mid (K^{\theta^\star}_t)^\top v=0,
\;
\fa t\in [0,T]\}=\{0\}$,
with $K^{\theta^\star}$
defined in \eqref{K1}.
% $(B^\star)^\top Q B^\star \in \sS_+^n$.
}
\end{enumerate}

\end{Assumption}

Before discussing  the regret  of Algorithm \ref{alg:conts_ls},
 we make the following remark
 of (H.\ref{assum:ls}).

\begin{Remark}
[\textbf{Self-exploration of finite-time horizon RL problems}]
\label{rmk:identificable}
 (H.\ref{assum:ls}\ref{assum:pd}) is the standard assumption 
 for finite-time horizon LQ-RL problems
(see e.g., \cite{hambly2020policy}),
{\color{black}
except that H.\ref{assum:ls}\ref{assum:pd} 
allows  $Q$ to be 
positive semidefinite,
which is important for costs depending on partial states.
}
(H.\ref{assum:ls}\ref{assum:non_degenerate}) corresponds to the identifiability of the true parameter  $\theta^\star$
by executing   the optimal policy  $K^{\theta^\star}$.
%(see e.g., \cite[Proposition 3]{duncan1992least}). 
In fact, 
as shown
in  Proposition \ref{prop:non_degenerate},
under  (H.\ref{assum:ls}\ref{assum:pd}), 
  (H.\ref{assum:ls}\ref{assum:non_degenerate}) 
  is equivalent to 
%   either of the following two statements:
 the following statement:
\begin{enumerate}%[(a)]
\item[(2')] \label{item:linear_span}
if $u\in \sR^n$ and $v\in \sR^d$ satisfy
$u^\top X^{\theta^\star}+v^\top U^{\theta^\star}=0$
 for
 $\d \sP\otimes \d t$-almost everywhere
 in $\Om\t [0,T]$, then $u=0$ and $v=0$,
 where 
 $X^{\theta^\star}$ and  $U^{\theta^\star}$ are
the optimal state and control processes of \eqref{LQ}-\eqref{dynamics}
defined by
\eqref{eq:state_psi_theta}
and 
\eqref{K1}, respectively,

% \item \eqref{rewrTheta} is well-defined for all $\theta$ sufficiently close to $\theta^\star$.
\end{enumerate}
% and they are in fact equivalent
% provided that $Q\in \sS_+^n$;
% in this case 
% (H.\ref{assum:ls}\ref{assum:non_degenerate}) holds if and only if $B$ is full column rank.
% This implies that $\theta^\star$ can be uniquely identified if we observe sufficiently many trajectories of 
% the state process $X^{\psi^\theta}$ % \eqref{lin_dyn}
% with a sufficiently accurate estimation $\theta$.

 Item 
 (2')
%  \ref{item:linear_span} 
 indicates an important self-exploration  property of   
  finite-time horizon
 continuous-time  RL problems.
%which does not appear in infinite-time horizon problems.
In particular, 
the time-dependent optimal feedback matrix 
$K^{\theta^\star}$
and the non-degenerate  noises
 guarantee the non-degeneracy of the space spanned by 
 $X^{\theta^\star}$ and 
  $U^{\theta^\star}$, 
enabling learning the parameters sufficiently well.
This self-exploration property is critical for our design  of exploration-free  learning algorithms for \eqref{LQ}-\eqref{dynamics}
with a logarithmic regret (see Theorems \ref{thm:regret_continuous} and \ref{thm:regret_discrete}).

{\color{black}
One can easily show that  (H.\ref{assum:ls}\ref{assum:non_degenerate}) 
holds if the optimal policy $(K^{\theta^\star})_{t\in [0,T]}$ is nondegenerate, i.e.,
$\sup_{t\in [0,T]}\lambda_{\min}\left((K^{\theta^\star}_t) (K^{\theta^\star}_t)^\top\right)>0$.
Similar nondegeneracy condition has been imposed in  \cite{cassel2020logarithmic} for discrete-time ergodic LQ-RL problems. In particular, by assuming that the optimal stationary policy satisfies  $\lambda_{\min}(K^\star(K^\star)^\top)>0$ (along with other controllablity conditions), they propose learning algorithms with a logarithmic regret,
under the assumption that only
the control coefficient
$B^\star$ is unknown. 
In contrast, 
we allow both the state coefficient $A^\star$ and the control coefficient $B^\star$ to be unknown. 

% for infinite-horizon problems, as the optimal feedback matrix $K^{\theta^\star}$ is time independent, 
% the optimal state-and-control pair 
% $(X^{\theta^\star}, U^{\theta^\star})$
% is not persistently exciting
% (\cite{green1986persistence}).
% Indeed,
% one can always find 
% nonzero vectors $u$ and $v$
% such that $u^\top I+ v^\top K^{\theta^\star} =0$,
% which implies that 
% the covariance matrix 
% of $(X^{\theta^\star}, K^{\theta^\star}X^{\theta^\star})$
% always has zero eigenvalue.
% This  prevents us from uniquely identifying 
% the true parameter $\theta^\star$
% based on greedy policies
% and from developing learning algorithms with  logarithmic non-asympototic performance guarantee
% (see \cite{cassel2020logarithmic,foster2020logarithmic, simchowitz2020naive}).
}

%Note that 
%we assume the  loss functional 
%\eqref{LQ} only involves 
%a time homogenous 
%running cost
%for a direct comparison with  infinite-time horizon 
%RL problems (see e.g., \cite{duncan1992least}), 
%but similar analysis can be performed 
%if the cost functions are time inhomogeneous 
%or a terminal cost is included. 

\end{Remark}

{\color{black}
Moreover, the following proposition provides sufficient conditions of  (H.\ref{assum:ls}\ref{assum:non_degenerate}),
which are special cases of 
Proposition
\ref{prop:non_degenerate_sufficient condition}.

\begin{Proposition}
\label{prop:non_degenerate_statement}
Let
$n,d\in \sN$, 
$Q\in \sS_0^n$ and $R\in \sS_+^d$.
\begin{enumerate}[(1)]
\item\label{item:sufficient_arbitraryT_statement}
 If
$(B^\star)^\top Q B^\star\in \sS^d_+$,
then (H.\ref{assum:ls}\ref{assum:non_degenerate}) holds for all  $T>0$.

\item
\label{item:sufficient_largeT_statement}
Assume that  the algebraic Riccati equation 
$
(A^\star)^\top P+PA^\star -P (B^\star R^{-1}(B^\star)^\top )P+Q=0
$
admits a 
unique  maximal  solution
$P^\star_\infty\in \sS^n_+$.
Let $K^\star_\infty=-R^{-1}(B^\star)^\top P^\star_\infty$, and
for each $T>0$, let $P^{\star,(T)}\in C([0,T];\sS_0^n)$ be defined in \eqref{riccati1}.
Assume that $\lim_{T\to \infty} P^{\star,(T)}_0=P^\star_\infty$ and $K^\star_\infty(K^\star_\infty)^\top\in \sS_+^d $.
Then there exists $T_0>0$, such that 
 (H.\ref{assum:ls}\ref{assum:non_degenerate}) holds 
for all $T\ge T_0$.
\end{enumerate}

\end{Proposition}

Proposition \ref{prop:non_degenerate_statement}
provides two sets of conditions for 
(H.\ref{assum:ls}\ref{assum:non_degenerate})
under two different scenarios: 
Item 
\ref{item:sufficient_arbitraryT_statement}
applies to an arbitrary finite $T>0$, 
and 
Item \ref{item:sufficient_largeT_statement}
only applies to 
sufficiently large $T$. Item \ref{item:sufficient_largeT_statement}
 assumes the asymptotic behavior of solutions to Riccati differential equations,
 which  can be ensured by 
 the stabilizability of 
 the pair
 $(A^\star,B^\star)$ and 
 detectability of  the pair
 $(A^\star, Q^{1/2})$
 (see \cite[Theorems 10.9 and 10.10]{bitmead1991riccati}).
 Note that our subsequent analysis is based on 
 (H.\ref{assum:ls}), 
 and does not require 
 stabilizability assumptions.
}

\begin{Remark}[\textbf{Stabilizability of  $(A^\star,B^\star)$
and dependence on $T$}]
Since  the LQ-RL problem \eqref{LQ}-\eqref{dynamics}
is
over the time horizon  $[0,T]$ with a fixed $T<\infty$, 
in general
one does not need  additional conditions on  
$(A^\star,B^\star)$ for  the well-definedness of \eqref{LQ}-\eqref{dynamics}.
If $T=\infty$, then some 
 stabilizability/controllability conditions of $(A^\star,B^\star)$ may be required 
for \eqref{LQ}-\eqref{dynamics} to ensure a well-defined solution
(see e.g., \cite{dean2019sample}).
{\color{black} Under these conditions, different algorithms have been shown to achieve sub-linear regret with respect to the number of decision steps (see e.g., \cite{certainty_ben,cohen2019learning}), and   even logarithmic regrets
provided that further identifiability assumptions are satisfied
(see e.g., \cite{kazem2018adaptive,faradonbeh2020input,cassel2020logarithmic,lale2020logarithmic});
see Section \ref{sec:discrete_time_rl} for more details.}
For $T<\infty$, 
the  regrets of learning algorithms 
for \eqref{LQ}-\eqref{dynamics}
 in general depend exponentially on the time horizon $T$
(e.g., the constants $C_0,C'$ in Theorem \ref{thm:regret_continuous}),
as
the moments of the optimal state process $X^{\theta^\star}$ and control process $U^{\theta^\star}$
may grow   exponentially with respect to $T$.
It would be interesting to quantify the precise dependence
of the regret bounds
 on $T$. This would entail
deriving precise \textit{a priori} bounds of solutions to \eqref{riccati2}
and 
estimating the norm 
$\|(\mathbb{E}[\int_0^T Z^{\theta^\star}_t (Z^{\theta^\star}_t)^\top\, \d t])^{-1}\|_2$ 
%(cf.~Proposition \ref{prop:non_degenerate})
in terms of $(A^\star,B^\star,Q,T)$, and is  left for future research.

\end{Remark}
 
We are now ready to state the main result of this section, which shows that the regret of Algorithm \ref{alg:conts_ls} 
grows logarithmically   with respect to the number of episodes.

\begin{Theorem}
\label{thm:regret_continuous}
Suppose (H.\ref{assum:ls}) holds
and let
 $\theta_0=(A_0,B_0)^\top\in \sR^{(n+d)\t d}$
%   with
%  $B_0\in \sR^{n\t d}$
% being full column rank. 
such that 
 (H.\ref{assum:ls}\ref{assum:non_degenerate}) holds with $\theta_0$.
 Then
there exists a constant $C_0>0$ such that 
 for all 
 $C\ge C_0$, and $\delta\in (0,\frac{3}{\pi^2})$,
 if one sets 
 $m_0=C(-\ln \delta)$ and $m_\ell=2^\ell m_0$ for all $\ell\in \sN$,
 then  with probability at least $1-\frac{\pi^2\delta}{3}$,
  the regret of Algorithm \ref{alg:conts_ls}
  given by 
\eqref{reg}
 satisfies
 $$
 R(M)\le C'\big((\ln M)(\ln\ln M)+(-\ln \delta) (\ln M) \big),
 \q \fa M\in \sN,
 $$ 
where  $C'$ is  a constant  independent of $M$ and $\delta$. 

\end{Theorem}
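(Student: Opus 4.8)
The plan is to decompose the regret into a sum over the update epochs $\ell=0,1,\dots,\ln_2 M$, and within each epoch to control the per-episode suboptimality gap $J^{\theta^\star}(U^{\psi^{\theta_\ell}})-J^{\theta^\star}(U^{\theta^\star})$ by a quantity that is quadratic in the parameter error $|\theta_\ell-\theta^\star|$. The first ingredient I would establish is a deterministic \emph{perturbation estimate}: for $\theta$ in a neighborhood of $\theta^\star$, the cost gap from running the greedy feedback $\psi^\theta$ (which is optimal for the wrong model) against the true dynamics satisfies $J^{\theta^\star}(U^{\psi^\theta})-J^{\theta^\star}(U^{\theta^\star})\le C|\theta-\theta^\star|^2$. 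This follows from the regularity and stability of the Riccati differential equation \eqref{riccati2}: $\theta\mapsto P^\theta$ is Lipschitz (in fact $C^1$) on $C([0,T];\sS^n_0)$, hence $\theta\mapsto K^\theta$ is Lipschitz, so the closed-loop state process $X^{\psi^\theta}$ is close to $X^{\theta^\star}$ in $\cS^2(\sR^n)$; because $U^{\theta^\star}$ is the \emph{minimizer} of the quadratic functional $J^{\theta^\star}$, the first-order term in the expansion of the gap vanishes, leaving only the quadratic term. One must be careful to confine $\theta_\ell$ to a fixed compact neighborhood of $\theta^\star$ on which all constants are uniform; this will be guaranteed by the estimation step with high probability.

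The second ingredient is the \emph{estimation error bound}: if the feedback $\psi^\theta$ with $\theta$ in the good neighborhood is executed for $m$ independent episodes and the update rule \eqref{reg_ls} is applied, then the new estimate $\theta^+$ satisfies $|\theta^+-\theta^\star|\le C\sqrt{(-\ln\delta)/m}$ with probability at least $1-\delta$, provided $m$ is large enough. Here I would start from the identity \eqref{rewrTheta}, write $\theta^+-\theta^\star$ as the product of an empirical-Gram-matrix inverse (plus the $\frac{1}{m}I$ regularization) and an empirical martingale-noise average $\frac1m\sum_i\int_0^T Z^{\psi^\theta,i}_t(\d W^i_t)^\top$. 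The Gram matrix concentrates around $\mathbb{E}[\int_0^T Z^{\psi^\theta}_t(Z^{\psi^\theta}_t)^\top\d t]$, which by Proposition \ref{prop:non_degenerate} (equivalently Item \ref{assum:non_degenerate}) is uniformly invertible in a neighborhood of $\theta^\star$; and the noise term is a sum of i.i.d.\ mean-zero terms whose components are products of a Gaussian-type process with an independent Brownian increment, hence \emph{sub-exponential}, so a Bernstein-type tail bound gives the $\sqrt{(-\ln\delta)/m}$ rate. This is the step I expect to be the main obstacle: establishing the \emph{uniform} (over $\theta$ near $\theta^\star$) sub-exponential tail bounds for the continuous-time least-squares estimator, including controlling the Gram matrix's smallest eigenvalue away from zero and handling the interplay between the random denominator and numerator, requires the careful probabilistic estimates the introduction alludes to.

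Given these two pieces, I would close the induction as follows. Choosing $C_0$ so that the estimation bound applies, set $m_\ell=2^\ell m_0$ with $m_0=C(-\ln\delta)$. A union bound over the (at most $\ln_2 M +1$) epochs — each failing with probability $\le\delta/(\ell+1)^2$, say, so that $\sum_\ell \delta/(\ell+1)^2\le \frac{\pi^2}{6}\delta$, accounting for the stated $\frac{\pi^2\delta}{3}$ — shows that on a good event all estimates $\theta_\ell$ stay in the neighborhood and satisfy $|\theta_\ell-\theta^\star|^2\le C(-\ln\delta)/m_\ell = C/2^\ell$. The regret contributed by epoch $\ell$ is then at most $m_\ell\cdot C|\theta_\ell-\theta^\star|^2 \le C\cdot 2^\ell m_0\cdot \frac{C'}{2^\ell m_0}\cdot(\text{a factor from the }\ell\text{-dependence})$. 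The crude bound $m_\ell|\theta_\ell-\theta^\star|^2 \le C$ per epoch, summed over $O(\ln M)$ epochs, already gives $O(\ln M)$; tracking the $(\ell+1)^2$ factor in the per-epoch failure probability (needed for the union bound) replaces $-\ln\delta$ by $-\ln\delta + 2\ln(\ell+1)$, and summing $\sum_{\ell=0}^{\ln_2 M}(\ell+1)^{-1}\cdot\ell \asymp \ln M$ while the $\ln(\ell+1)$ terms contribute the extra $(\ln M)(\ln\ln M)$; the initial-estimate contribution from $\theta_0$ (the one epoch where we only know $B_0$ is full column rank, not that $\theta_0$ is close to $\theta^\star$) is bounded separately by a constant times $m_0$. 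Collecting terms yields $R(M)\le C'\big((\ln M)(\ln\ln M)+(-\ln\delta)(\ln M)\big)$ on the good event, as claimed.
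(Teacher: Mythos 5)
Your proposal is correct and follows essentially the same route as the paper: a quadratic performance-gap bound via Riccati stability and the vanishing first-order term (Proposition \ref{prop:performance_gap}), a Bernstein-type estimation bound built on uniform sub-exponential tails and the uniform invertibility of the Gram matrix near $\theta^\star$ (Theorem \ref{thm:concentration_ls}, Propositions \ref{prop:non_degenerate} and \ref{theta_conc}), and an induction/union bound with per-epoch tolerances $\delta/(\ell+1)^2$ whose $\ln(\ell+1)$ terms sum via Stirling to the $(\ln M)(\ln\ln M)$ contribution. The only cosmetic differences are in bookkeeping (the paper allots $2\delta_\ell$ per epoch, giving exactly $\tfrac{\pi^2}{3}\delta$, and bounds $\hat\theta-\theta^\star$ through the resolvent identity $E^{-1}-F^{-1}=F^{-1}(F-E)E^{-1}$ rather than isolating the martingale-noise average), neither of which changes the argument.
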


To simplify the presentation, we 
analyze the performance of  Algorithm \ref{alg:conts_ls}  
by assuming 
  the number of learning episodes $\{m_\ell\}_{\ell}$ 
  is doubled 
 between two successive updates of the estimation of $\theta^\star$.
Similar regret results can be established 
for Algorithm \ref{alg:conts_ls} with different choices of $\{m_\ell\}_{\ell}$.
{\color{black}
Under this specific choice of $\{m_\ell\}_{\ell}$, for any $M\in \sN$, Algorithm \ref{alg:conts_ls}  splits  $M$ episodes into $L=\lceil \log_2(\frac{M}{m_0}+1)\rceil-1$ %$L=\lceil \log_2(\frac{M}{m_0}+1)\rceil-1$ 
cycles, 
where the $\ell$-th cycle, 
 $\ell=0,1,\ldots,L-1$,
contains
 $m_\ell$ episodes,
 and the remaining 
 $M-\sum_{\ell=0}^{L-1}m_\ell$
 episodes are in the last cycle.
 
 }

\paragraph{Sketched proof of Theorem \ref{thm:regret_continuous}.}

We  outline the key steps of the proof of 
Theorem \ref{thm:regret_continuous}, and present the detailed arguments 
to Section \ref{sec:regret_continuous}.
By exploiting the  regularity and robustness of solutions to 
\eqref{riccati2},
we prove that 
the performance gap 
$J^{\theta^\star}(U^{\psi^{\theta}})-J^{\theta^\star}(U^{\theta^\star})$
is of the magnitude $\cO(|\theta-\theta^\star|^2)$, for all 
 \textit{a-priori} bounded $\theta$ 
(Proposition \ref{prop:performance_gap}).
We then establish 
a uniform  sub-exponential property for 
 the
 (deterministic and stochastic)
  integrals in  \eqref{rewrTheta},
which 
along with 
(H.\ref{assum:ls}\ref{assum:non_degenerate}) 
and Bernstein's inequality
leads to the following estimate of the parameter estimation error: for all $\delta\in (0,1/2)$, 
all sufficiently large $m\in \sN$,
and all $\theta$ sufficiently close to $\theta^\star$,
\bb\label{eq:parameter_error_conts}
|\hat{\theta}-\theta^\star|\leq \cO\Big(\sqrt{\frac{-\ln\delta}{m}}+\frac{-\ln\delta }{m}+\frac{(-\ln\delta)^{2}}{m^2}\Big),
\q 
\textnormal{with probability $1-2\delta$,}
\ee
where $\hat{\theta}$ is generated by 
\eqref{reg_ls} with $\psi^\theta$
(Proposition \ref{theta_conc}). 
%Finally, based on the results of the above  steps, we show  the choices of hyper-parameters in Algorithm \ref{alg:conts_ls}  ensure that $\{\theta_\ell\}_{\ell\in \sN}$ remains in a suitable neighbourhood of $\theta^\star$ with high probability, and then estimate the precise regret of  Algorithm \ref{alg:conts_ls}.
{\color{black} 
Then for each $\delta>0$,
applying \eqref{eq:parameter_error_conts} with $\delta_\ell=\delta/(\ell+1)^2$ for all  $\ell\in \sN\cup\{0\}$
shows that
with probability $1-2\sum_{\ell=0}^\infty\delta_\ell=1-\tfrac{\pi^2\delta}{3}$,
 \begin{equation}
 \label{eq:parameter_error_conts_sketched}
|\hat{\theta}_{\ell+1}-\theta^\star|^2\lesssim %\cO\Big(
\frac{-\ln\delta_\ell}{m_\ell}
+\frac{(-\ln\delta_\ell)^2 }{m^2_\ell}
+\frac{(-\ln\delta_\ell)^{4}}{m_\ell^4},
%\Big),
\quad \forall \ell\in \sN,
\end{equation}
where $\lesssim$ means the inequality holds 
with a multiplicative constant independent of $\delta$ and $\ell$.
By the quadratic performance gap
and the choice of $\{m_\ell\}_\ell$, the regret of Algorithm \ref{alg:conts_ls}
up to the $M$-th episode can be bounded by
the  regret at the end of $L$-th cycle
with $L=\lceil \log_2(\frac{M}{m_0}+1)\rceil-1 $:
 \begin{align}
 \label{eq:R_N_conts_sketched}
\begin{split}
R(M)&
\lesssim
\sum_{\ell=0}^{L } m_\ell |\theta_\ell-\theta^\star|^2
\lesssim
\sum_{\ell=0}^{L}  (-\ln\delta_\ell)
\left(
1
+\frac{-\ln\delta_\ell }{m_\ell}
+\frac{(-\ln\delta_\ell)^{3}}{m_\ell^3}
\right).
\end{split}
\end{align}
Observe that 
% $\sum_{\ell=0}^\infty\delta_\ell=\frac{\pi^2}{6}$
% and
the choices of
$\{\delta_\ell\}_\ell$
and 
$\{m_\ell\}_\ell$
  ensure
that 
$\sup_{\delta\in
(0,\frac{3}{\pi^2}),\ell\in \sN}\frac{-\ln\delta_\ell}{m_\ell}<\infty$.
Hence,
the right-hand side of 
\eqref{eq:R_N_conts_sketched} is of the magnitude
$\cO\left(
\sum_{\ell=0}^{L}  (-\ln\delta_\ell)\right)
$, 
which along with the choices of $\delta_\ell$
and $L$ leads to the desired  regret  bound;
see the end of Section \ref{sec:regret_continuous}
for more details.
 }

\subsection{Discrete-time least-squares algorithm and its regret bound}
\label{sec:discrete_ls}
{\color{black} Note that 
Algorithm \ref{alg:conts_ls} in Section \ref{sec:conts_ls}
requires executing  feedback controls and observing  corresponding state trajectories
continuously.  
%which sometimes may not be feasible.
A common practice to 
solve continuous-time RL problems 
is by assuming that at each learning episode the dynamics only evolves  
in discrete time, and then estimate parameters according to discrete-time RL algorithms
(see e.g., \cite{munos1998reinforcement, munos2000study,munos2006policy,tallec2019making}).
As the true dynamics evolves continuously, 
it is necessary   to 
quantify the impact of reaction stepsize on the algorithm performance.

In this section, we analyze the performance of  the above procedure for solving \eqref{LQ}-\eqref{dynamics}. 
We  adapt regularized least-squares algorithms for discrete-time LQ problems to the present setting,
and establish their regret bounds in terms of the discretization stepsize. 
Our analysis  shows that
 a proper scaling of 
the regularization term 
in the least-squares estimation 
in terms of   stepsize
is critical
for
a  robust performance with respect to different timescales.
}

%Therefore, when applying discrete-time RL algorithms in a near-continuous environment, it is critical to adopt a proper scaling of the hyper-parameters for a  robust  performance with respect to different timescales (see also \cite{tallec2019making}).

% Note that 
% Algorithm \ref{alg:conts_ls} in Section \ref{sec:conts_ls}
% requires  to execute  feedback controls and observe  corresponding state trajectories
% in a continuous-time manner, 
% which sometimes may not be feasible.
% In this section, we propose a discrete-time least-squares algorithm for   \eqref{LQ}-\eqref{dynamics},
% where the controller interacts with the underlying system with  piecewise constant feedback strategies,
% and  improves her knowledge of the parameter $\theta^\star$ based on discrete-time observations.

More precisely,
for a given cycle (i.e., the index $\ell$ in Algorithm \ref{alg:conts_ls}),
let $\theta =(A,B)^\top\in \sR^{(n+d)\t n} $ be the current estimate of $\theta^\star $
in \eqref{dynamics},
and  let
$\{t_i\}_{i=0}^{N}$, $N\in \sN$,
be a uniform partition of $[0,T]$ 
with stepsize 
$\tau=T/N$.
% we  first construct a feedback control 
% for 
%   \eqref{LQ}-\eqref{dynamics},
%   which remains  constant on each subinterval $[t_i,t_{i+1})$, $i=0,\ldots, N-1$.
%   One possible approach  is
%  assume that
%  \eqref{LQ}-\eqref{dynamics} is  piecewise constant between any two grid points,
  We then assume that
  \eqref{LQ}-\eqref{dynamics} is  piecewise constant between any two grid points $\{t_i\}_{i=0}^{N}$,
  choose actions and 
  make observations every $\tau$,
 and update the estimated parameter 
  based on these observations.
% for 
%   \eqref{LQ}-\eqref{dynamics},
%   which remains  constant on each subinterval $[t_i,t_{i+1})$, $i=0,\ldots,
% and solve it with existing learning algorithms for discrete-time LQ-RL problems (see e.g., \cite{dean2019sample,certainty_ben}).
To this end, we  
consider  the following discrete-time LQ control problem with parameter $\theta$:
\begin{align}\label{disc_LQ}
\begin{split}
\inf_{U \in \cH^2_N (\sR^d)} J_{{N}}(U),
\q
\textnormal{with 
$ J_{{N}}(U) = \mathbb{E}\left[\sum_{i=0}^{N-1} \left((X^{U,\tau}_{t_i})^\top  QX^{U,\tau}_{t_i}+U_{t_i}^\top  RU_{t_i}\right)\tau
\right]$
},
%\\
%\text{subject to} \q  X_{t_{i+1}}-X_{t_{i}}&=  (AX_{t_i}+  B U_{t_i})\tau+ w_{t_i}, 
%\q i=0,\ldots N-1,
\end{split}
\end{align}
where 
$\cH^2_N (\sR^d)=\{U\in \cH^2(\sR^d)\mid U_t=U_{t_i}, t\in [t_i,t_{i+1}), i=0,\ldots,N-1\}$,
and 
$(X^{U,\tau}_{t_i})_{i=0}^{N-1}$ are defined by
\bb\label{disc_dynamics}
  X^{U,\tau}_{t_{i+1}}-X^{U,\tau}_{t_{i}}=  (AX^{U,\tau}_{t_i}+  B U_{t_i})\tau+ W_{t_{i+1}}-W_{t_i}, 
\q i=0,\ldots N-1;
\q 
X^{U,\tau}_0=x_0.
\ee
{\color{black}
Note that for simplicity,
our strategy is constructed
by assuming 
a discrete-time dynamics
arising from  
an Euler discretization of 
\eqref{dynamics}
(with the estimated parameter $\theta$);
similar analysis can be performed 
with a high-order approximation of 
\eqref{LQ}-\eqref{dynamics}.
}

It is well-known that  (see e.g., \cite{bitmead1991riccati}),
 the optimal  control of \eqref{disc_LQ}-\eqref{disc_dynamics}
 is given by 
the following feedback form:
\bb\label{eq:feedback_theta_tau}
U_t=\psi^{ \theta,\tau}(t,X^{U,\tau}_t), 
\q 
\textnormal{
with 
$\psi^{ \theta,\tau}(t,x)= K^{\theta,\tau}_t x,
\q
\fa (t,x)\in [0,T)\t \sR^n,
$
}
\ee
 where
 $K^{\theta,\tau}:[0,T)\to \sR^{d\t n}$ 
  is the piecewise constant function (with stepsize $\tau=T/N$)
 defined by
\begin{align}
 P^{\theta,\tau}_{t_{i-1}} 
 &=  \tau Q+(I+\tau A)^\top  P^{\theta,\tau}_{t_{i}} (I+\tau A) - (I+\tau A) ^\top P^{\theta,\tau}_{t_{i}}\tau B( R+\tau B^\top   P^{\theta,\tau}_{t_{i}} B)^{-1}   B^\top   P^{\theta,\tau}_{t_{i}} (I+\tau A),
 \nonumber
 \\
 &\q\q \fa i=0,\dots,N-1;
 \q   P^{\theta,\tau}_{T}=0,
 \nonumber
 \\
 K^{\theta,\tau}_t&=-( R+\tau B^\top   P^{\theta,\tau}_{t_{i+1}} B)^{-1}  B^\top   P^{\theta,\tau}_{t_{i+1}}(I+\tau A),
\q t\in [t_i,t_{i+1}),\,  i=0,\ldots,N-1.
\label{eq:K_pi}
\end{align}

We then implement the  piecewise constant strategy $\psi^{ \theta,\tau}$
defined in \eqref{eq:feedback_theta_tau}
on the original system 
\eqref{dynamics}
for $m$ episodes,
and  update the  estimated  parameter $\theta$
by observing 
\eqref{dynamics}
with stepsize $\tau=T/N$. 
More precisely, 
let 
   $X^{\psi^{\theta, \tau}}\in \cS^2(\sR^n)$
be the state process associated with  $\psi^{ \theta,\tau}$: 
\begin{equation}
\label{eq:sde_theta_pi}
\d X_t=(A^\star X_t+B^\star K^{\theta,\tau}_{t_i} X_t)\,\d t+\d W_t, 
\q t\in [t_i,t_{i+1}],\, i=0,\ldots,N-1; 
\q 
X_0=x_0,
\end{equation}
and
 $(X^{\psi^{\theta, \tau},j}_t)_{t\in [0,T]}$, $j=1,\ldots,m$, $m\in \sN$,
 be $m$ independent   trajectories of   $X^{\psi^{\theta, \tau}}\in \cS^2(\sR^n)$,
 we  update the parameter $\theta$ according to the following discrete-time least-squares estimator:
 \begin{equation}\label{reg_ls_discrete_1}
\theta\leftarrow 
\argmin_{\theta\in \sR^{(n+d)\t n}} \sum_{j=1}^m\sum_{i=0}^{N-1}\|X^{\psi^{\theta, \tau},j}_{t_{i+1}}-X^{\psi^{\theta, \tau},j}_{t_{i}}-\tau \theta^\top Z^{\psi^{\theta, \tau},j}_{t_{i}}\|_2^2+\tau \tr(\theta^\top \theta),
 \end{equation}
  with $Z^{\psi^{\theta, \tau},j}_{t_{i}}\coloneqq\begin{psmallmatrix} X^{\psi^{\theta, \tau},j}_{t_{i}}\\K^{\theta,\tau}_{t_i}  X^{\psi^{\theta, \tau},j}_{t_{i}} \end{psmallmatrix}$
  for all $i,j$.
  {\color{black}
  The  update 
  \eqref{reg_ls_discrete_1}
  is consistent with the agent's assumption that the state evolves according to  \eqref{disc_dynamics}
  between two grid points.
  Setting the  derivative 
    (with respect to $\theta$)
of the right-hand side of \eqref{reg_ls_discrete_1}
 to zero leads to 
  $$
  -\sum_{j=1}^m\sum_{i=0}^{N-1}
  \tau Z^{\psi^{\theta, \tau},j}_{t_{i}}
  \left(
  \left(X^{\psi^{\theta, \tau},j}_{t_{i+1}}-X^{\psi^{\theta, \tau},j}_{t_{i}}\right)^\top
  -\tau (Z^{\psi^{\theta, \tau},j}_{t_{i}})^\top \theta
  \right)
  +\tau \theta=0.
  $$
  Dividing both sides by $\tau/m$
  and rearranging the terms 
  give the following equivalent expression of the discrete-time least squares estimator \eqref{reg_ls_discrete_1}:
  }
\begin{equation}\label{reg_ls_discrete}
{\theta} \longleftarrow 
\bigg(\frac{1}{m}\sum_{j=1}^m \sum_{i=0}^{N-1} Z^{\psi^{\theta, \tau},j}_{t_{i}}(Z^{\psi^{\theta, \tau},j}_{t_{i}})^\top \tau+\frac{1}{m} I\bigg)^{-1} 
\bigg(\frac{1}{m}\sum_{j=1}^m\sum_{i=0}^{N-1}
Z^{\psi^{\theta, \tau},j}_{t_{i}}\left(X^{\psi^{\theta, \tau},j}_{t_{i+1}}-X^{\psi^{\theta, \tau},j}_{t_{i}}\right)^\top \bigg).
 \end{equation}
 
 \begin{Remark}
 [\textbf{Scaling hyper-parameters with timescales}]
 \label{rmk:scaling}
  In 
  principle, 
 when applying
discrete-time RL algorithms
in a continuous environment,
it is critical 
 to adopt a proper scaling of the hyper-parameters 
for a  robust  performance
with respect to different timescales.
 Indeed,
scaling the regularization term  $\tr(\theta^\top \theta)$
in \eqref{reg_ls_discrete_1} 
with respect to  the stepsize $\tau$ 
is essential for  the 
robustness of  \eqref{reg_ls_discrete}
% consistency of
%  the discrete-time estimator
%  \eqref{reg_ls_discrete}
% with the continuous-time estimator \eqref{reg_ls},
for all small stepsizes $\tau$.
{\color{black}
If one updates
  $\theta$ by minimizing the following $\ell_2$-regularized loss function
with a given hyper-parameter $\alpha<1$ such that
 \begin{equation}\label{reg_ls_discrete_1_alpha}
%\theta\leftarrow 
\argmin_{\theta\in \sR^{(n+d)\t n}} \sum_{j=1}^m\sum_{i=0}^{N-1}\|X^{\psi^{\theta, \tau},j}_{t_{i+1}}-X^{\psi^{\theta, \tau},j}_{t_{i}}-\tau \theta^\top Z^{\psi^{\theta, \tau},j}_{t_{i}}\|_2^2+ \tau^\alpha\tr(\theta^\top \theta),
 \end{equation}
 then
 the corresponding discrete-time  estimator is  given by
 \begin{equation*}%\label{reg_ls_discrete}
{\theta}^\tau \coloneqq
\bigg(\frac{1}{m}\sum_{j=1}^m \sum_{i=0}^{N-1} Z^{\psi^{\theta, \tau},j}_{t_{i}}(Z^{\psi^{\theta, \tau},j}_{t_{i}})^\top \tau+\frac{1}{\tau^{1-\alpha} m} I\bigg)^{-1} 
\bigg(\frac{1}{m}\sum_{j=1}^m\sum_{i=0}^{N-1}
Z^{\psi^{\theta, \tau},j}_{t_{i}}\left(X^{\psi^{\theta, \tau},j}_{t_{i+1}}-X^{\psi^{\theta, \tau},j}_{t_{i}}\right)^\top \bigg).
 \end{equation*}
 Observe that 
 for any given $m\in \sN$, the estimator $\theta^\tau $ degenerates  to zero as the stepsize $\tau$ tends to zero.
Hence, to ensure the viability of $\theta^\tau$ across   different timescales, 
the number of episodes $m$ has to increase
 appropriately when   $\tau$ tends to zero.
%   This  contradicts the expectation 
%  that controlling and observing the system 
%  more frequently would only improve the algorithm performance.
In contrast, 
by choosing $\alpha=1$ in
\eqref{reg_ls_discrete_1_alpha},
\eqref{reg_ls_discrete} admits a  
 continuous-time limit \eqref{reg_ls} as $\tau\to 0$,
 and leads to a learning algorithm in which the  episode
 numbers and the time stepsize can be chosen independently (see Theorem \ref{thm:regret_discrete}).

% carefully scale the hyper-parameters of  discrete-time RL algorithms with respect to stepsize, in order to achieve
%   a  robust performance with respect to different timescales
%   (see  \cite{tallec2019making} for
%   a degration of vanilla 
%   $Q$-learning methods
%   with small stepsize).
 }
\end{Remark}

% Note that, 
% scaling the regularization term  $\tr(\theta^\top \theta)$
% in \eqref{reg_ls_discrete_1} 
% with respect to  the stepsize $\tau$ 
% is essential for the equivalence between \eqref{reg_ls_discrete_1}  
% and  \eqref{reg_ls_discrete},
% which subsequently ensures the consistency of
%  the discrete-time estimator
%  \eqref{reg_ls_discrete}
% with the continuous-time estimator \eqref{reg_ls},
% as the stepsize $\tau$ tends to zero.
% A  scaling parameter decaying slower than the stepsize
% may   blow-up   \eqref{reg_ls_discrete_1}  
% for sufficiently small stepsize. 
% Therefore, 
% when applying
% discrete-time RL algorithms
% in a near-continuous environment,
% it is critical 
%  to adopt a proper scaling of the hyper-parameters 
% for a  robust  performance
% with respect to different timescales
% (see also \cite{tallec2019making}).

We now summarize the discrete-time least-squares algorithm as follows.

\begin{algorithm}[H]
  \caption{Discrete-time least-squares  algorithm}
  \label{alg:ls_discrete}
\begin{algorithmic}[1]
 \STATE \textbf{Input}: 
  Choose an initial estimation  
 $ \theta_0$ of $\theta^\star$,
 numbers of learning episodes $\{m_\ell\}_{\ell\in \sN\cup\{0\}}$
and  numbers of intervention points  $\{N_\ell\}_{\ell\in \sN\cup\{0\}}$.
% the Number of episodes in each iteration $m_j>0$ for $j=0,1,\dots$.%exploration sequence $\sigma_j>0$ ($j\geq 0$).

 \FOR {$\ell=0, 1, \cdots$}
  \STATE 
  Obtain the piecewise constant  control  $\psi^{ \theta_\ell,\tau_\ell}$  
%  on grid $\pi_\ell=\{i\tau_\ell\}_{i=0}^{N_\ell-1}$
as   \eqref{eq:feedback_theta_tau}
	with  $\tau=T/N_\ell$
	and $\theta=\theta_\ell$.
  \STATE  Execute the  control $\psi^{ \theta_\ell,\tau_\ell}$ for $m_\ell$ independent episodes, 
   and collect the  data
   $X^{\psi^{ \theta_\ell,\tau_\ell},j}_{t_i}$, $i=0,\ldots,N_\ell$, $j=1,\ldots, m_\ell$.

    \STATE Obtain an updated estimation ${\theta}_{\ell+1}$ by using
      \eqref{reg_ls_discrete} and the data
	 $(X^{\psi^{ \theta_\ell,\tau_\ell},j}_{t_i})_{i=0,\ldots,N_\ell,j=1,\ldots, m_\ell}$.
\ENDFOR 
\end{algorithmic}
\end{algorithm}

{\color{black}
Again, as the $\ell$-th cycle of Algorithm \ref{alg:ls_discrete} contains  $m_\ell$  episodes,
for each $M\in\sN$,
the regret of learning up to $M$ episodes (cf.~\eqref{reg})
can be upper bounded by the accumulated 
regret at the end of the $L$-th cycle, 
where $L$ is the smallest integer such that 
$\sum_{\ell=0}^L m_\ell\ge M$.
}
The following theorem is an analogue of Theorem \ref{thm:regret_continuous} for  Algorithm \ref{alg:ls_discrete}.

\begin{Theorem}
\label{thm:regret_discrete}
Suppose (H.\ref{assum:ls}) holds
and let
 $\theta_0=(A_0,B_0)^\top\in \sR^{(n+d)\t d}$
%  with
%  $B_0\in \sR^{n\t d}$
% being full column rank.
such that 
 (H.\ref{assum:ls}\ref{assum:non_degenerate}) holds with $\theta_0$.
 Then
there exists   $C_0>0$
and $n_0\in \sN$
 such that 
 for all 
 $C\ge C_0$, and 
 $\delta\in (0,\frac{3}{\pi^2})$,
% $\delta\in (0,1/4)$,
 if one sets 
 $m_0=C(-\ln \delta)$, $m_\ell=2^\ell m_0$
 and $N_\ell\ge n_0$
  for all $\ell\in \sN\cup\{0\}$,
 then  
 with probability at least $1-\frac{\pi^2\delta}{3}$, 
\textcolor{black}
 {the regret of Algorithm \ref{alg:ls_discrete}
given by \eqref{reg}
}
satisfies  
 \begin{align}
 \label{eq:regret_discrete_statement}
 R(M)\le C'\left((\ln M)(\ln \ln M)+(-\ln \delta) (\ln M)+(-\ln \delta )\sum_{\ell=0}^{\ln M} 2^\ell N_\ell^{-2}
 \right),
 \q 
\fa M\in \sN,
\end{align}
where  $C'$ is  a constant  independent of $M$, $\delta$ and $(N_\ell)_{\ell\in \sN\cup\{0\}}$. 
\end{Theorem}

% Compared with Algorithm \ref{alg:conts_ls}
% (cf.~Theorem \ref{thm:regret_continuous}),
% the regret of Algorithm \ref{alg:ls_discrete}
% has an additional term 
% $\sum_{\ell=0}^{\ln M} 2^\ell N_\ell^{-2}$:
% for each learning episode, 
% one achieves a  sub-optimal loss by executing  a  piecewise constant policy 
% and also suffers from model misspecification error 
% in parameter estimation 
% from discrete-time observations. 
{\color{black}
%Theorem \ref{thm:regret_discrete} suggests that applying  Algorithm \ref{alg:ls_discrete} with fixed stepsize $\tau$ to solve \eqref{LQ}-\eqref{dynamics} will result in a sub-optimal linear regret of order $\cO((-\ln \delta)\tau^2 M)$, while a logarithmic regret can be achieved by setting $N_\ell=\sqrt{2}^{\ell}N_0$ for   $\ell=1,\ldots, \ln M$.

\begin{Remark}
Theorem \ref{thm:regret_discrete} provides a general regret bound
of Algorithm \ref{alg:ls_discrete}
with any time discretization  steps $\{N_\ell\}_{\ell\geq 0}$, where $N_\ell$ is the number of intervention points in the  $\ell$-th cycle. 
Compared with Algorithm \ref{alg:conts_ls},
the regret of Algorithm \ref{alg:ls_discrete}
has an additional term 
$(-\ln\delta) \sum_{\ell=0}^{\ln M} 2^\ell N_\ell^{-2}$:
for each learning episode, 
one achieves a  sub-optimal loss by
adjusting her  policy 
in the discrete time
% executing  a  piecewise constant policy 
and also suffers from model misspecification error 
in parameter estimation 
from discrete-time observations. 
Specifically,
\begin{itemize}
    \item if 
    the time discretization  step is fixed for all cycles,
    i.e., 
    % we apply Algorithm \ref{alg:ls_discrete} with fixed stepsize $\tau$ to solve \eqref{LQ}-\eqref{dynamics} in all iterations, which means
    $N_\ell=T/\tau$ for all $\ell$, then the last  term
    of \eqref{eq:regret_discrete_statement}
    is of the magnitude:
    $$\cO\left((-\ln\delta)\sum_{\ell=0}^{\ln M} 2^\ell N_\ell^{-2}\right)=\cO\left((-\ln\delta)\tau^2\sum_{\ell=0}^{\ln M} 2^\ell\right)=\cO((-\ln \delta)\tau^2 M),$$
    and consequently 
    Algorithm \ref{alg:ls_discrete} achieves 
    a sub-optimal linear regret;
    \item if 
    the time discretization  step of the $\ell$-th cycle increases exponentially in terms of $\ell$,
    e.g., $N_\ell=\sqrt{2}^{\ell}N_0$ for   $\ell=1,\ldots, \ln M$, then 
    the last  term
    of \eqref{eq:regret_discrete_statement}
    is of the magnitude:
    $$\cO\left((-\ln\delta)\sum_{\ell=0}^{\ln M} 2^\ell N_\ell^{-2}\right)=\cO\left((-\ln\delta)\sum_{\ell=0}^{\ln M} N_0^{-2}\right)=\cO((-\ln\delta)\ln M),$$ which guarantees that the regret of Algorithm \ref{alg:ls_discrete} is still logarithmic in $M$.
\end{itemize}
\end{Remark}
}

\paragraph{Sketched proof of Theorem \ref{thm:regret_discrete}.}

%\begin{proof}[Sketched proof of Theorem \ref{thm:regret_discrete}] 
We point out  the main differences
 between the proofs of Theorems 
\ref{thm:regret_continuous}-\ref{thm:regret_discrete},
and give 
the detailed proof of  Theorem \ref{thm:regret_discrete}   in Section \ref{sec:regret_discrete}.
Compared with Theorem \ref{thm:regret_continuous}, 
the essential challenges   in proving Theorem \ref{thm:regret_discrete}
are to quantify the precise dependence 
of the performance gap 
and the parameter estimation error
on the stepsize. 
To this end,
we first prove a first-order convergence of \eqref{eq:K_pi} to \eqref{K2nonoise}
as the stepsize tends to zero.
Then by 
 exploiting the  affine structure of   \eqref{eq:sde_theta_pi},
we establish
the following quadratic performance gap
for a piecewise constant policy $\psi^{\theta,\tau}$
(Proposition \ref{prop:performance_gap_discrete}):  
\bb\label{eq:gap_discrete_sketch}
J^{\theta^\star}(U^{\psi^{\theta,\tau}})-J^{\theta^\star}(U^{\theta^\star})
\le C(|\theta-\theta^\star|^2+\tau^2).
\ee

The analysis of the parameter estimation error 
 is somewhat  involved,
 as 
the state  trajectories 
are merely   $\a$-H\"{o}lder continuous in time with $\a<1/2$.
{\color{black}By leveraging the analytic expression of $X^{\psi^{\theta,\tau}}$, we first show the first-order convergence of $\hat{\theta}^{\tau}$ to $\theta^\star$ with
\begin{equation}\label{eq:ls_discrete_auxilary}
\hat{\theta}^{\tau}
\coloneqq
 \bigg(\sE\bigg[ \sum_{i=0}^{N-1} Z^{\psi^{\theta,\tau}}_{t_{i}}(Z^{\psi^{\theta,\tau}}_{t_{i}})^\top \tau\bigg]\bigg)^{-1} 
\bigg(\sE\bigg[\sum_{i=0}^{N-1}
Z^{\psi^{\theta,\tau}}_{t_{i}}
\left(X^{\psi^{\theta,\tau}}_{t_{i+1}}-X^{\psi^{\theta,\tau}}_{t_{i}}\right)^\top 
\bigg]
\bigg).
\end{equation}
We then 
prove that \eqref{reg_ls_discrete} enjoys a uniform sub-exponential tail bound for all $\theta$  close to $\theta^\star$ and  small $\tau$.
Comparing \eqref{reg_ls_discrete} with \eqref{eq:ls_discrete_auxilary}
and applying the above results allow for bounding the  estimation error of 
\eqref{reg_ls_discrete}
by \eqref{eq:parameter_error_conts}
with an additional $\cO(\tau)$ term
(Proposition \ref{theta_conc_discrete}).
}
% We shall compare 
%  \eqref{reg_ls_discrete} 
% % is technically challenging 
% % and in general leads to sub-optimal  bounds for the time discretization error.
% %We overcome this difficulty by considering
% with %the following quantity:
% \begin{equation}\label{eq:ls_discrete_auxilary}
% \hat{\theta}^{\tau}
% \coloneqq
%  \bigg(\sE\bigg[ \sum_{i=0}^{N-1} Z^{\psi^{\theta,\tau}}_{t_{i}}(Z^{\psi^{\theta,\tau}}_{t_{i}})^\top \tau\bigg]\bigg)^{-1} 
% \bigg(\sE\bigg[\sum_{i=0}^{N-1}
% Z^{\psi^{\theta,\tau}}_{t_{i}}
% \left(X^{\psi^{\theta,\tau}}_{t_{i+1}}-X^{\psi^{\theta,\tau}}_{t_{i}}\right)^\top 
% \bigg]
% \bigg),
%  \end{equation}
% which 
%  is well-defined
%  {\color{black}\sout{and enjoys a uniform sub-exponential tail bound}}
%   for all $\theta$  close to $\theta^\star$ and  small $\tau$.
% By leveraging the analytic expression of $X^{\psi^{\theta,\tau}}$, we can prove 
% the (optimal) first-order convergence 
% of $\hat{\theta}^{\tau}$ to $\theta^\star$,
% which subsequently 
% enables us to  bound the parameter estimation error of 
% \eqref{reg_ls_discrete}
% by \eqref{eq:parameter_error_conts}
% with an additional $\cO(\tau)$ term
% (Proposition \ref{theta_conc_discrete}).
%\end{proof}

\section{Proofs of Theorems \ref{thm:regret_continuous} and \ref{thm:regret_discrete}}

To simplify the notation,
for  any given 
$N,m\in \sN$
and 
 control 
$\psi:[0,T]\t \sR^n\to \sR^d$
that is affine in the spatial variable,
we introduce the following 
random variables
associated with continuous-time observations:
\begin{equation}\label{eq:rv_ls_psi_conts}
\begin{alignedat}{2}
V^{\psi}
&=\int_0^T 
Z^{\psi}_{t}(Z^{\psi}_{t})^\top\,\d t,
\qq
&
Y^{\psi}
 &
=\int_0^T 
Z^{\psi}_{t}(\d X^{\psi}_{t})^\top,
\\
V^{\psi,m}
&=\frac{1}{m}\sum_{j=1}^m\int_0^T 
Z^{\psi,j}_{t}(Z^{\psi,j}_{t})^\top\,\d t,
\qq
&
Y^{\psi,m}
 &=\frac{1}{m}\sum_{j=1}^m
\int_0^T 
Z^{\psi,j}_{t}(\d X^{\psi,j}_{t})^\top,
\end{alignedat}
\end{equation}
and the  random variables associated with 
discrete-time observations with  stepsize $\tau=T/N$:
\begin{equation}\label{eq:rv_ls_psi_discrete}
\begin{alignedat}{2}
V^{\psi,\tau}
&=\sum_{i=0}^{N-1}
Z^{\psi}_{t_i}(Z^{\psi}_{t_i})^\top\tau,
\qq
&
Y^{\psi,\tau}
& =
\sum_{i=0}^{N-1}
Z^{\psi}_{t_i}( X^{\psi}_{t_{i+1}}- X^{\psi}_{t_{i}})^\top,
\\
V^{\psi,\tau,m}
&=
\frac{1}{m}\sum_{j=1}^m
\sum_{i=0}^{N-1}
Z^{\psi,j}_{t_i}(Z^{\psi,j}_{t_i})^\top\tau,
\qq
&
Y^{\psi,\tau,m}
&=
\frac{1}{m}\sum_{j=1}^m
\sum_{i=0}^{N-1}
Z^{\psi,j}_{t_i}( X^{\psi,j}_{t_{i+1}}-X^{\psi,j}_{t_{i}})^\top,
\end{alignedat}
\end{equation}
where  
$X^{\psi}$ is the state process
associated with
the parameter $\theta^\star$ 
and the control $\psi$
(cf.~\eqref{dynamics-n}),
$Z^{\psi}_t=
\begin{psmallmatrix} X^{\psi}
\\
\psi(t,X^{\psi}_t)
 \end{psmallmatrix}
$
 for all $t\in [0,T]$,
and
 $(X^{\psi,j},Z^{\psi,j})_{j=1}^m$
are independent copies of 
 $(X^{\psi},Z^{\psi})$.

\subsection{Convergence and stability of Riccati equations and   feedback controls}

\begin{Lemma}\label{lemma:a_priori_riccati}
Suppose (H.\ref{assum:ls}\ref{assum:pd}) holds. 
Then for all $\theta=(A,B)^\top\in \sR^{(n+d)\t n}$,
the  Riccati equation
\begin{equation}
\label{riccati_theta}
%\begin{cases}
\tfrac{\d}{\d t}P_t + A^\top P_t + P_tA  - P_t B R^{-1}B^\top P_t + Q=0, 
\q t \in [0,T];
\q 
P_T=0.
\end{equation}
 admits a unique solution $P^\theta\in C([0,T];\sR^{n\t n})$.
 Moreover,
%  the following properties:
% \begin{enumerate}[(1)]
% \item
% \label{item:C1_theta}
the map $\sR^{(n+d)\t n}\ni\theta\mapsto P^\theta\in C^1([0,T];\sR^{n\t n})$ is continuously differentiable.
% \item \label{item:psd}
% if $x\in \sR^n$
% and 
% a non-empty open interval $\cI\subset [0,T]$
%  satisfy $P^\theta_tx=0$ for all $t\in \cI$, 
% then $x=0$.
%\end{enumerate}
 
\end{Lemma}
\begin{proof}
It has been shown in
\cite[Corollary 2.10 on p.~297]{yong1999stochastic}
that 
under (H.\ref{assum:ls}\ref{assum:pd}), 
 for all $\theta=(A,B)^\top\in \sR^{(n+d)\t n}$,
\eqref{riccati_theta} admits a unique solution $P^\theta\in C([0,T];\sR^{n\t n})$
such that $P^\theta_t\in \sS^n_0$ for all $t\in [0,T]$.
It remains to 
to prove the continuous differentiability of $\theta\mapsto P^\theta$.

To this end, consider
the Banach spaces
$\sX= \sR^{(n+d)\t n}
\t C^1([0,T];\sR^{n\t n})$
and
$\sY=C([0,T];\sR^{n\t n})\t \sR^{n\t n}$,
and
the operator
$\Phi:\sX\to \sY$ defined by
 $$
 \sX
 \ni ( \theta, P)
 \mapsto
 \Phi(\theta, P)
 \coloneqq
 (F(\theta,P), P_T)
\in \sY,
$$
where 
$F(\theta, P)_t=
\tfrac{\d}{\d t}P_t + A^\top P_t + P_tA  - P_t B R^{-1}B^\top P_t + Q 
$ for all $t\in [0,T]$.
Observe that for all $\theta\in \sR^{(n+d)\t n}$,
$\Phi(P^\theta, \theta)=0$.
Moreover, one can easily show that
for any $(P,\theta)\in \sX$,
$\Phi$ is continuously Fr\'echet 
differentiable at 
$(P,\theta)$,
and
the partial derivative 
$\frac{\p}{\p P }\Phi (\theta, P): C^1([0,T];\sR^{n\t n})
\to \sY$
is a bounded linear operator
such that 
for all $ \tilde{P}\in C^1([0,T];\sR^{n\t n})$,
$$
\frac{\p}{\p P }\Phi (\theta, P)(\tilde{P})
=
\begin{pmatrix}
\left(
\tfrac{\d}{\d t}\tilde{P}_t + A^\top \tilde{P}_t + \tilde{P}_tA  - \tilde{P}_t B R^{-1}B^\top P_t 
- P_t B R^{-1}B^\top \tilde{P}_t
\right)_{t\in [0,T]}
\\
\tilde{P}_T
\end{pmatrix}
\in \sY.
$$
 Classical well-posedness results of linear differential equations 
 and 
the boundedness of $P$ imply that 
$\frac{\p}{\p P }\Phi (P,\theta): C^1([0,T];\sR^{n\t n})
\to \sY$ has a bounded inverse (and hence a bijection).
Thus, applying
 the implicit function theorem 
 (see \cite[Theorem 7.13-1]{ciarlet2013linear})
 to 
 $\Phi$ proves  that 
 $\sR^{(n+d)\t n}\ni \theta\mapsto P^\theta
 \in 
  C^1([0,T];\sR^{n\t n})$ is continuously differentiable.
%
% To prove Item 
% \ref{item:psd}, let $x\in \sR^n$ 
% and $\cI\subset [0,T]$ be a non-empty open interval 
% such that  $P^\theta_tx=0$ for all $t\in \cI$.
% By  $P^\theta_t\in \sS^n_0$,  
%  $x^\top P^\theta_t=0$
%  and  $x^\top P^\theta_tx=0$
%   for all $t\in \cI$.
%  Then, left multiplying \eqref{riccati_theta} by $x^\top $ and right multiplying \eqref{riccati_theta} by $x $
% yields $x^\top Qx=0$,
%  which implies $x=0$ as $Q\in \sS^n_+$.
%
\end{proof}

The following lemma establishes the stability of   the Riccati difference operator,
which is crucial for the subsequent convergence analysis.

\begin{Lemma}\label{lemma:a_priori_riccati_discrete}
Suppose (H.\ref{assum:ls}\ref{assum:pd}) holds. 
For each $\theta=(A,B)^\top\in \sR^{(n+d)\t n}$
and 
$N\in \sN$, 
let $\tau=T/N$ and 
 the function $\Gamma^\theta_\tau:\sS^n_0\to \sS^n_0$ 
such that   for all $P\in \sS^n_0$,
\begin{equation}\label{eq:one_step_riccati}
\Gamma^\theta_\tau(P)
\coloneqq
%\begin{cases}
\tau Q+(I+\tau A)^\top  P(I+\tau A) - (I+\tau A) ^\top P\tau B(  R+\tau B^\top   P  B)^{-1}    B^\top   P(I+\tau A).
\end{equation}
Then for all $P, P'\in \sS^n_0$,
\begin{enumerate}[(1)]
\item \label{item:a_prior_bdd}
$\|\Gamma^\theta_\tau(P)\|_2\le \tau\|Q\|_2+(1+\tau \|A\|_2)^2\|P\|_2$,
\item \label{item:stable}
$\|\Gamma^\theta_\tau(P)-\Gamma^\theta_\tau(P')\|_2
\le 
\big(1+\tau \|R^{-1}\|_2  \|B\|_2^2 \max\{\|P\|_2,\|P'\|_2\}\big)^2(1+\tau \|A\|_2)^2
\|P-P'\|_2.
$
\end{enumerate}
\end{Lemma}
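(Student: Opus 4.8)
The plan is to peel off the harmless parts of $\Gamma^\theta_\tau$ and concentrate on its genuinely nonlinear piece. Set $\Lambda\coloneqq\tau B R^{-1}B^\top\in\sS^n_0$ and, for $P\in\sS^n_0$,
\[
G^\theta_\tau(P)\coloneqq P-\tau PB\big(R+\tau B^\top PB\big)^{-1}B^\top P,
\]
which is well defined because $R\in\sS^d_+$ forces $R+\tau B^\top PB\in\sS^d_+$, and observe that $\Gamma^\theta_\tau(P)=\tau Q+(I+\tau A)^\top G^\theta_\tau(P)(I+\tau A)$. The first step is to record three elementary facts about $G^\theta_\tau$. (i) The matrix inversion (Woodbury) lemma yields $G^\theta_\tau(P)=P(I+\Lambda P)^{-1}=(I+P\Lambda)^{-1}P$, where $I+P\Lambda$ and $I+\Lambda P$ are invertible since the spectrum of a product of two positive semidefinite matrices lies in $[0,\infty)$. (ii) $0\preceq G^\theta_\tau(P)\preceq P$: the upper bound is immediate because $\tau PB(R+\tau B^\top PB)^{-1}B^\top P\in\sS^n_0$, and the lower bound follows from a quadratic-form computation using $G^\theta_\tau(P)=P(I+\Lambda P)^{-1}$ (for $v\in\sR^n$, setting $w\coloneqq(I+\Lambda P)^{-1}v$ gives $v^\top G^\theta_\tau(P)v=w^\top Pw+w^\top P\Lambda Pw\ge0$). (iii) Hence $\|G^\theta_\tau(P)\|_2=\lambda_{\max}(G^\theta_\tau(P))\le\lambda_{\max}(P)=\|P\|_2$.

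Item (1) is then immediate: by the triangle inequality, submultiplicativity of $\|\cdot\|_2$, $\|I+\tau A\|_2\le1+\tau\|A\|_2$, and fact (iii),
\[
\|\Gamma^\theta_\tau(P)\|_2\le\tau\|Q\|_2+\|I+\tau A\|_2^2\,\|G^\theta_\tau(P)\|_2\le\tau\|Q\|_2+(1+\tau\|A\|_2)^2\|P\|_2.
\]

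For Item (2), from $\Gamma^\theta_\tau(P)-\Gamma^\theta_\tau(P')=(I+\tau A)^\top\big(G^\theta_\tau(P)-G^\theta_\tau(P')\big)(I+\tau A)$ it suffices to show $\|G^\theta_\tau(P)-G^\theta_\tau(P')\|_2\le\big(1+\|\Lambda\|_2\max\{\|P\|_2,\|P'\|_2\}\big)^2\|P-P'\|_2$ and to note $\|\Lambda\|_2\le\tau\|R^{-1}\|_2\|B\|_2^2$. Writing $G^\theta_\tau(P)=(I+P\Lambda)^{-1}P$, adding and subtracting $(I+P\Lambda)^{-1}P'$, and using the resolvent identity $(I+P\Lambda)^{-1}-(I+P'\Lambda)^{-1}=-(I+P\Lambda)^{-1}(P-P')\Lambda(I+P'\Lambda)^{-1}$ together with $(I+P'\Lambda)^{-1}P'=P'(I+\Lambda P')^{-1}$ and $I-\Lambda P'(I+\Lambda P')^{-1}=(I+\Lambda P')^{-1}$, a short computation gives the exact factorization
\[
G^\theta_\tau(P)-G^\theta_\tau(P')=(I+P\Lambda)^{-1}(P-P')(I+\Lambda P')^{-1}.
\]
It remains to bound the two outer factors. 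From fact (i), $(I+P\Lambda)^{-1}=I-P\Lambda(I+P\Lambda)^{-1}=I-P(I+\Lambda P)^{-1}\Lambda=I-G^\theta_\tau(P)\Lambda$, so fact (iii) gives $\|(I+P\Lambda)^{-1}\|_2\le1+\|G^\theta_\tau(P)\|_2\|\Lambda\|_2\le1+\|P\|_2\|\Lambda\|_2$; transposing, $\|(I+\Lambda P')^{-1}\|_2=\|(I+P'\Lambda)^{-1}\|_2\le1+\|P'\|_2\|\Lambda\|_2$. Multiplying the three estimates and absorbing $\|P\|_2,\|P'\|_2$ into their maximum yields the claim.

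All manipulations are routine matrix algebra; the only points needing a little care are the semidefiniteness in fact (ii) and the invertibility of $I\pm P\Lambda$, $I\pm\Lambda P$ for possibly singular $P$, which are handled by the quadratic-form argument above (or, equivalently, by a density reduction to $P\in\sS^n_+$). The slightly non-obvious step is the bound $\|(I+P\Lambda)^{-1}\|_2\le1+\|P\|_2\|\Lambda\|_2$, which is not obvious since $(I+P\Lambda)^{-1}$ need not be normal; the trick is to rewrite it as $I-G^\theta_\tau(P)\Lambda$ and to exploit $\|G^\theta_\tau(P)\|_2\le\|P\|_2$.
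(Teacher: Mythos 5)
Your proof is correct, and for Item \ref{item:stable} it takes a genuinely different route from the paper's. Item \ref{item:a_prior_bdd} is essentially the same in both treatments: one drops the negative semidefinite correction term and uses $\|I+\tau A\|_2\le 1+\tau\|A\|_2$ (the paper phrases this through the quadratic form $x^\top\Gamma^\theta_\tau(P)x$, you through $\|G^\theta_\tau(P)\|_2\le\|P\|_2$). For Item \ref{item:stable}, the paper invokes the known perturbation identity for the Riccati difference operator (Lemma 10.1 of Bitmead--Gevers), namely $\delta\Gamma(P)=F^\top\delta P\,F-F^\top\delta P\,\tau B(\tau B^\top P\tau B+\tau R)^{-1}\tau B^\top\delta P\,F$ with $F=(I-\tau B(\tau B^\top P'\tau B+\tau R)^{-1}\tau B^\top P')(I+\tau A)$, obtains the one-sided bound $x^\top\delta\Gamma(P)x\le\|\delta P\|_2\|F\|_2^2|x|^2$ by discarding the negative semidefinite quadratic term, and then symmetrizes by interchanging $P$ and $P'$. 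You instead derive, from the Woodbury identity and a resolvent identity, the exact factorization $G^\theta_\tau(P)-G^\theta_\tau(P')=(I+P\Lambda)^{-1}(P-P')(I+\Lambda P')^{-1}$ and control the outer factors via the rewriting $(I+P\Lambda)^{-1}=I-G^\theta_\tau(P)\Lambda$; I checked the factorization and the resolvent bounds, and they are correct. Your argument is self-contained (no external citation), produces a genuine operator-norm bound rather than a one-sided quadratic-form bound that must be symmetrized, and in fact gives the marginally sharper constant $(1+\|\Lambda\|_2\|P\|_2)(1+\|\Lambda\|_2\|P'\|_2)$ before you relax to the maximum; it also shows as a by-product that $\Gamma^\theta_\tau$ indeed maps $\sS^n_0$ into $\sS^n_0$. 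The paper's route is shorter once the cited lemma is granted. Both arrive at exactly the stated estimates.
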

\begin{proof}
  Item \ref{item:a_prior_bdd}
  follows directly from
  the definition of $\Gamma^\theta_\tau$ and 
   the identity that $\|\Gamma^\theta_\tau(P)\|_2=\sup\{x^\top \Gamma^\theta_\tau(P)x\mid x\in \sR^n, |x|=1\}$.
We now prove   
Item \ref{item:stable}.
Let 
$\delta P=P-P'$
and 
$\delta \Gamma(P)=\Gamma^\theta_\tau(P)-\Gamma^\theta_\tau(P')$,
by
\cite[Lemma 10.1]{bitmead1991riccati},
\begin{align*}
\delta \Gamma(P)=F^\top\delta P F-F^\top\delta P \tau B(\tau B^\top P\tau B+\tau R)^{-1}\tau B^\top\delta P F,
\end{align*}
with 
$F=(I-\tau B(\tau B^\top P' \tau B+\tau R)^{-1} \tau B^\top P')(I+\tau A)$. 
Thus  
 for all $x\in \sR^n$,
$x^\top\delta \Gamma(P)x\le \|\delta P\|_2 \|F\|^2_2|x|^2$,
which along with  
$\|(\tau B^\top P'  B+ R)^{-1}\|_2\le \|R^{-1}\|_2 $
implies 
$$x^\top\delta \Gamma(P)x\le 
\|\delta P\|_2(1+\tau \|R^{-1}\|_2  \|B\|_2^2 \|P'\|_2)^2(1+\tau \|A\|_2)^2 
|x|^2,
\q x\in \sR^n.
$$
Hence,  interchanging the roles of $P$ and $P'$ in the above inequality and taking the supremum over $x\in \sR^n$ 
lead  to the desired estimate.
\end{proof}

The following proposition establishes the first-order convergence of the Riccati difference equation
and the associated feedback controls,
as the stepsize tends to zero.

\begin{Proposition}\label{prop:riccati_conv_rate}
Suppose (H.\ref{assum:ls}\ref{assum:pd}) holds and let $\Theta$ be a bounded subset of 
$\sR^{(n+d)\t n}$.
For each $\theta=(A,B)^\top\in\Theta$ and $N\in \sN$, 
let 
$(P^{\theta,\tau}_i)_{i=0}^N$ 
%be  the matrices
such that 
$P^{\theta,\tau}_N=0$ and 
$P^{\theta,\tau}_i=\Gamma^\theta_\tau(P^{\theta,\tau}_{i+1})$ for all $i=0,\ldots,N-1$,
with $\Gamma^\theta_\tau$  defined in \eqref{eq:one_step_riccati}
with $\tau=T/N$.
Then there exists a constant $C\ge 0$
such that
for all $\theta \in\Theta, N\in \sN$,
$$\sup_{i=0,\ldots, N-1}\sup_{t\in [i\tau,(i+1)\tau )}
\big(
\|P^\theta_t-P^{\theta,\tau}_i\|_2
+\|K^\theta_t-K^{\theta,\tau}_i\|_2
\big)
\le C\tau,
$$
where 
$P^\theta\in C^1([0,T];\sR^{n\t n})$ satisfies 
\eqref{riccati_theta},
$K^\theta_t=-R^{-1}B^\top P^{\theta}_t$ for all $t\in [0,T]$
and 
$K^{\theta,\tau}_i=
-( R+\tau B^\top   P^{\theta,\tau}_{{i+1}} B)^{-1}  B^\top   P^{\theta,\tau}_{{i+1}}(I+\tau A)$
for all $i=0,\ldots, N-1$.
\end{Proposition}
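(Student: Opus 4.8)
The plan is to treat the family $(P^{\theta,\tau}_i)_{i=0}^N$ as the output of a one-step (Euler-type) scheme $\Gamma^\theta_\tau$ for the Riccati ODE \eqref{riccati_theta}, and to combine a local truncation error estimate of order $\tau^2$ with a discrete Gronwall (stability) argument, taking care that every constant is uniform in $\theta\in\Theta$ \emph{and} in the stepsize $\tau=T/N$. First I would fix uniform a priori bounds. Since $\Theta$ is bounded and $\theta\mapsto P^\theta$ is continuous from $\sR^{(n+d)\times n}$ into $C^1([0,T];\sR^{n\times n})$ by Lemma \ref{lemma:a_priori_riccati}\ref{item:C1_theta}, the number $R_1\coloneqq\sup_{\theta\in\overline\Theta}\|P^\theta\|_{C^1([0,T];\sR^{n\times n})}$ is finite. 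On the discrete side, iterating Lemma \ref{lemma:a_priori_riccati_discrete}\ref{item:a_prior_bdd} backward from $P^{\theta,\tau}_N=0$ gives $\|P^{\theta,\tau}_i\|_2\le \|Q\|_2 T\,\big(1+\tfrac{T}{N}\sup_{\theta\in\Theta}\|A\|_2\big)^{2N}\le \|Q\|_2 T\, e^{2T\sup_{\theta\in\Theta}\|A\|_2}$, a bound independent of $i$, $N$ and $\theta\in\Theta$; let $R_0$ dominate this and $R_1$.

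Next I would establish the local error. Expanding \eqref{eq:one_step_riccati} and using $\|(R+\tau B^\top PB)^{-1}-R^{-1}\|_2\le \|R^{-1}\|_2^2\|B\|_2^2\|P\|_2\,\tau$, one checks that $\Gamma^\theta_\tau(P)=P+\tau f^\theta(P)+r^\theta_\tau(P)$, where $f^\theta(P)\coloneqq A^\top P+PA-PBR^{-1}B^\top P+Q$ and $\|r^\theta_\tau(P)\|_2\le C\tau^2$ uniformly for $\|P\|_2\le R_0$, $\theta\in\Theta$ and $\tau\le1$. On the other hand, setting $\tilde P^{\theta,\tau}_i\coloneqq P^\theta_{i\tau}$ and integrating \eqref{riccati_theta}, i.e.\ $\tfrac{\d}{\d t}P^\theta_s=-f^\theta(P^\theta_s)$, over $[i\tau,(i+1)\tau]$ gives
$$\tilde P^{\theta,\tau}_i=\tilde P^{\theta,\tau}_{i+1}+\int_{i\tau}^{(i+1)\tau} f^\theta(P^\theta_s)\,\d s=\tilde P^{\theta,\tau}_{i+1}+\tau f^\theta(\tilde P^{\theta,\tau}_{i+1})+\rho^{\theta,\tau}_i,\qquad \|\rho^{\theta,\tau}_i\|_2\le C\tau^2,$$
the last bound because $f^\theta$ is polynomial (hence Lipschitz on bounded sets) and $\sup_{\theta\in\overline\Theta}\|P^\theta\|_{C^1}\le R_0$. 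Combining the two displays yields $\|\tilde P^{\theta,\tau}_i-\Gamma^\theta_\tau(\tilde P^{\theta,\tau}_{i+1})\|_2\le C\tau^2$ uniformly.

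Then I would run the stability estimate. With $e_i\coloneqq\|\tilde P^{\theta,\tau}_i-P^{\theta,\tau}_i\|_2$, using $P^{\theta,\tau}_i=\Gamma^\theta_\tau(P^{\theta,\tau}_{i+1})$, the previous bound, and Lemma \ref{lemma:a_priori_riccati_discrete}\ref{item:stable} with both arguments bounded by $R_0$, one gets $e_i\le (1+C\tau)e_{i+1}+C\tau^2$; since $e_N=0$, iterating backward gives $e_i\le C\tau^2\sum_{k=0}^{N-1}(1+C\tau)^k\le C\tau^2\,N\,e^{CT}=C\,e^{CT}\,T\,\tau$, hence $\max_i e_i\le C\tau$ uniformly in $\theta\in\Theta$ and $N$. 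Finally, for $t\in[i\tau,(i+1)\tau)$ the bound $\|P^\theta_t-P^{\theta,\tau}_i\|_2\le\|P^\theta_t-P^\theta_{i\tau}\|_2+e_i\le R_0\tau+C\tau$ handles the $P$-part, and for the gains I would compare $K^{\theta,\tau}_i=-(R+\tau B^\top P^{\theta,\tau}_{i+1}B)^{-1}B^\top P^{\theta,\tau}_{i+1}(I+\tau A)$ with $K^\theta_t=-R^{-1}B^\top P^\theta_t$ termwise: replacing $(R+\tau B^\top P^{\theta,\tau}_{i+1}B)^{-1}$ by $R^{-1}$, $(I+\tau A)$ by $I$, then $P^{\theta,\tau}_{i+1}$ by $P^\theta_{(i+1)\tau}$, and finally $P^\theta_{(i+1)\tau}$ by $P^\theta_t$, each step costing $\cO(\tau)$ by the a priori bounds, the estimate on $\|(R+\tau M)^{-1}-R^{-1}\|_2$, and $\max_i e_i\le C\tau$; summing the contributions gives $\|K^\theta_t-K^{\theta,\tau}_i\|_2\le C\tau$.

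I expect the main obstacle to be purely bookkeeping of uniformity: the argument collapses unless the a priori bound on $\|P^{\theta,\tau}_i\|_2$ in the first step is genuinely independent of $\tau$ (otherwise the Lipschitz factor from Lemma \ref{lemma:a_priori_riccati_discrete}\ref{item:stable} is not of the form $1+\cO(\tau)$ and the discrete Gronwall step fails), and unless the one-step expansion error $r^\theta_\tau$ is $\cO(\tau^2)$ with a constant depending only on $R_0$ and $\sup_{\theta\in\Theta}(\|A\|_2+\|B\|_2+\|Q\|_2+\|R^{-1}\|_2)$; once these two uniformity points are secured, the remainder is a standard consistency-plus-stability computation.
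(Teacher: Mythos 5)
Your proposal is correct and follows essentially the same route as the paper's proof: a uniform discrete a priori bound via Lemma \ref{lemma:a_priori_riccati_discrete}\ref{item:a_prior_bdd}, a local truncation error of order $\tau^2$ obtained by expanding $\Gamma^\theta_\tau$ against the integrated Riccati ODE, the Lipschitz stability of Lemma \ref{lemma:a_priori_riccati_discrete}\ref{item:stable} combined with a discrete Gronwall iteration, and a termwise comparison for the feedback gains. The only cosmetic difference is that you phrase the consistency step through the explicit Euler decomposition $\Gamma^\theta_\tau(P)=P+\tau f^\theta(P)+r^\theta_\tau(P)$, whereas the paper bounds $P^\theta_{t_i}-\Gamma^\theta_\tau(P^\theta_{t_{i+1}})$ directly; both yield the same $\cO(\tau^2)$ estimate with the same uniformity in $\theta$ and $N$.
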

\begin{proof}
Throughout this proof, we shall fix $\theta\in \Theta, N\in \sN$, 
let  $t_i=i\tau$ for all $i=0,\ldots, N$,
and 
denote by
 $C$ a generic constant independent of $N$ and $\theta$.
By the continuity of the map $\theta\mapsto P^\theta$ 
(Lemma \ref{lemma:a_priori_riccati})
and the  boundedness of $\Theta$,  there exists a constant $C$ such that 
$\|P^\theta\|_{ C^1([0,T];\sR^{n\t n})}\le C$ for all $\theta\in \Theta$,
which implies 
$\|P^\theta_t-P^{\theta}_s\|_2\le C|t-s|$
 for all $t,s\in [0,T]$.
Consequently, 
it suffices to prove 
$\|P^\theta_{t_i}-P^{\theta,\tau}_i\|_2+\|K^\theta_{t_{i+1}}-K^{\theta,\tau}_i\|_2
\le C\tau
$
for all $i=0,\ldots, N-1$.

We start by making two important observations.
By Lemma 
\ref{lemma:a_priori_riccati_discrete} Item \ref{item:a_prior_bdd},
$\|P^{\theta,\tau}_i\|_2\le \tau C+(1+C\tau)\|P^{\theta,\tau}_{i+1}\|_2$
for all  $i=0,\ldots, N-1$,
which along with Gronwall's inequality gives 
$ \|P^{\theta,\tau}_i\|_2\le C$
for all $i=0,\ldots, N$.
Moreover, by  \eqref{eq:one_step_riccati},
for all $P\in \sS^n_0$,
\begin{align*}
\Gamma^\theta_\tau(P)
&=
\tau Q+
P+\tau (A^\top P+PA)+\tau^2 A^\top P A
\\
&\q -\tau \Big(
P  B (  R+\tau B^\top   P   B)^{-1}  B^\top P
+ \tau (A^\top H+HA^\top)+\tau^2 A^\top HA
\Big),
\end{align*}
with $H\coloneqq P  B (  R+\tau B^\top   P   B)^{-1}  B^\top P$.
Hence
for any given  $i=0,\ldots, N-1$,
we see  from 
\eqref{riccati_theta}
that 
\begin{align*}
&P^\theta_{t_i}-\Gamma^\theta_\tau(P^\theta_{t_{i+1}})
\\
&=
\int_{t_{i}}^{t_{i+1}}\big(A^\top (P^\theta_t -P^\theta_{t_{i+1}})+ (P^\theta_t-P^\theta_{t_{i+1}})A  \big)\,\d t
-\int_{t_{i}}^{t_{i+1}}  (P^\theta_t B R^{-1}B^\top P^\theta_t-P^\theta_ {t_{i+1}}B R^{-1}B^\top P^\theta_{t_{i+1}}) \,\d t
\\
&\q -
\int_{t_{i}}^{t_{i+1}}  (P^\theta_ {t_{i+1}}B R^{-1}B^\top P^\theta_{t_{i+1}}
-P^\theta_ {t_{i+1}}B (  R+\tau B^\top   P^\theta_{t_{i+1}}   B)^{-1} B^\top P^\theta_{t_{i+1}}
) \,\d t
\\
&\q +\tau^2 (
-A^\top P^\theta_ {t_{i+1}} A+
A^\top {H}^\theta_{i+1}+{H}^\theta_{i+1} A^\top+\tau A^\top {H}^\theta_{i+1} A),
\end{align*}
with 
${H}^\theta_{i+1}=P^\theta_ {t_{i+1}}B (  R+\tau B^\top   P^\theta_{t_{i+1}}   B)^{-1} B^\top P^\theta_{t_{i+1}}$.
Since $\|P^\theta\|_{ C^1([0,T];\sR^{n\t n})}\le C$ and $R\in \sS^d_+$, 
we have $\|P^\theta_{t_i}-\Gamma^\theta_\tau(P^\theta_{t_{i+1}})\|_2\le C\tau^2$ for all $i=0,\ldots, N-1$.

We are ready to  
show   $\max_{i=0,\ldots, N-1}
(\|P^\theta_{t_i}-P^{\theta,\tau}_i\|_2+\|K^\theta_{t_{i+1}}-K^{\theta,\tau}_i\|_2)
\le C\tau$.
For any given  $i=0,\ldots, N-1$, 
by Lemma \ref{lemma:a_priori_riccati_discrete} Item \ref{item:stable} 
 and 
 the uniform boundedness of $(P^\theta_{t_i})_{i=0}^N$ and $(P^{\theta,\tau}_i)_{i=0}^N$,
 \begin{align*}
 \|P^\theta_{t_i}-P^{\theta,\tau}_i\|_2
& \le 
\|P^\theta_{t_i}-\Gamma^\theta_\tau(P^\theta_{t_{i+1}})\|_2
 +\|\Gamma^\theta_\tau(P^\theta_{t_{i+1}})-
 \Gamma^\theta_\tau(P^{\theta,\tau}_{i+1})\|_2
 \\
& \le C\tau^2+
 \big(1+\tau  C \max\{\|P^\theta_{t_{i+1}}\|_2,\|P^{\theta,\tau}_{{i+1}}\|_2\}\big)^2(1+\tau)
\|P^\theta_{t_{i+1}}-P^{\theta,\tau}_{{i+1}}\|_2
\\
&
\le
C\tau^2+
 \big(1+\tau  C)\|P^\theta_{t_{i+1}}-P^{\theta,\tau}_{{i+1}}\|_2,
 \end{align*}
which 
 along with Gronwall's inequality 
 and $P^\theta_{T}=P^{\theta,\tau}_{N}=0$
 shows the desired convergence rate
 of $(P^{\theta,\tau}_{i})_{i=1}^N$. 
Furthermore, 
 for all $i=0,\ldots, N-1$,
 \begin{align*}
 \|K^\theta_{t_{i+1}}-K^{\theta,\tau}_i\|_2
 &\le \|
 (R^{-1}- ( R+\tau B^\top   P^{\theta,\tau}_{{i+1}} B)^{-1})B^\top P^{\theta}_{t_{i+1}}\|_2
 \\
&\q  +
\| ( R+\tau B^\top   P^{\theta,\tau}_{{i+1}} B)^{-1}  B^\top   ( P^{\theta}_{t_{i+1}}-P^{\theta,\tau}_{{i+1}}(I+\tau A))\|_2
\le 
C\tau,
 \end{align*}
from the facts that 
 $\|P^{\theta}_{t_i}\|_2\le C$,  $\|P^{\theta,\tau}_{i}\|_2\le C$
 and $\|P^\theta_{t_i}-P^{\theta,\tau}_i\|_2 \le C\tau$ for all $i$.
\end{proof}

\subsection{Concentration inequalities for  least-squares estimators}

In this section, we analyze the  concentration behavior of the least-squares estimators
\eqref{reg_ls}
and \eqref{reg_ls_discrete}.
We first recall the definition of  sub-exponential 
random variables
(see e.g., \cite{martin}).

\begin{Definition}
	A  random variable $X$ with mean $\mu=\mathbb{E}[X]$ is 
	$(\nu,b)$-sub-exponential
	for  $\nu,b\in [0,\infty)$
	 if $\mathbb{E}[e^{\lambda(X-\mu)}]\leq e^{\nu^2\lambda^2/2}$ for all $|\lambda|<1/b$.

\end{Definition}

Note that 
a $(\nu,0)$-sub-exponential random variable is usually called a sub-Gaussian random variable.
It is well-known that products of sub-Gaussian random variables are sub-exponential,
and   the class of sub-exponential random variables forms a vector space.
Moreover, 
sub-exponential random variables enjoy the following  concentration inequality
(also known as Bernstein's inequality; see e.g., \cite[Equation 2.18 p.~29]{martin}).
\begin{Lemma} \label{concentration}
Let $m\in \sN$,
$\nu,b\in [0,\infty)$ and 
$(X_i)_{i=1}^m$ be independent 
$(\nu,b)$-sub-exponential random variables
with $\mu=\sE[X_i]$ for all $i=1,\ldots, m$.
Then for all $\epsilon\ge 0$,
\[
\mathbb{P}\left(\left|\frac{1}{m}\sum_{i=1}^m X_i -\mu\right| \geq \epsilon\right)\leq 
2\exp\left(-\min\left\{\frac{m \epsilon^2}{2\nu^2},\frac{m \epsilon }{2b}\right\}\right).
\]
\end{Lemma}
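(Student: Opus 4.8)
The plan is to use the classical Chernoff--Cram\'er exponential moment method applied to the centered average $S_m=\frac1m\sum_{i=1}^m(X_i-\mu)$, followed by an optimization of the free parameter $\lambda$ over the admissible range $[0,1/b)$ and a union bound over the two tails.

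First I would bound the upper tail. For any $\lambda\in[0,1/b)$, Markov's inequality applied to $e^{\lambda m S_m}$, together with the independence of the $X_i$ and the defining bound $\sE[e^{\lambda(X_i-\mu)}]\le e^{\nu^2\lambda^2/2}$ (valid precisely because $\lambda<1/b$), gives
\[
\sP(S_m\ge\epsilon)\le e^{-\lambda m\epsilon}\prod_{i=1}^m\sE\big[e^{\lambda(X_i-\mu)}\big]\le \exp\!\Big(-m\big(\lambda\epsilon-\tfrac{\nu^2\lambda^2}{2}\big)\Big).
\]
Next I would maximize $\lambda\mapsto \lambda\epsilon-\nu^2\lambda^2/2$ over $[0,1/b)$. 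If $\epsilon\le\nu^2/b$, the unconstrained maximizer $\lambda=\epsilon/\nu^2$ lies in the admissible range and yields the exponent $\epsilon^2/(2\nu^2)$; if $\epsilon>\nu^2/b$, the function is increasing on $[0,1/b)$, so letting $\lambda\uparrow 1/b$ yields the exponent $\epsilon/b-\nu^2/(2b^2)\ge\epsilon/(2b)$. Since the two regimes are separated exactly by the sign of $\epsilon^2/(2\nu^2)-\epsilon/(2b)$, in both cases one obtains $\sP(S_m\ge\epsilon)\le\exp(-\min\{m\epsilon^2/(2\nu^2),\,m\epsilon/(2b)\})$, with the conventions $1/0=\infty$ and $a/0=\infty$ for $a>0$ covering the degenerate parameter values $b=0$ (the sub-Gaussian case) and $\nu=0$, and the case $\epsilon=0$ being trivial.

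Finally, since each $-X_i$ is again $(\nu,b)$-sub-exponential with mean $-\mu$, the identical argument bounds $\sP(-S_m\ge\epsilon)$ by the same quantity, and a union bound over $\{|S_m|\ge\epsilon\}\subseteq\{S_m\ge\epsilon\}\cup\{-S_m\ge\epsilon\}$ produces the factor $2$, completing the proof. I do not expect a genuine obstacle here; the only point requiring slight care is the case split in the optimization of $\lambda$ together with the boundary behavior as $\lambda\uparrow1/b$ and the degenerate parameter values.
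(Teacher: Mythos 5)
Your argument is correct and is exactly the standard Chernoff--Cram\'er proof of Bernstein's inequality for sub-exponential variables; the paper does not prove this lemma itself but cites it (as Equation 2.18 of the referenced monograph), and the proof given there is the same one you outline, including the case split at $\epsilon=\nu^2/b$ in the optimization over $\lambda\in[0,1/b)$ and the union bound for the two-sided statement. No gaps.
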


The following lemma  shows  double iterated It\^{o} integrals
are   sub-exponential random variables.

\begin{Lemma}\label{uniform_sub_exp}
Let 
$L\ge 0$
and
%$f:[0,T]\to \sR^n$,
$g,h:[0,T]\t [0,T]\to \sR^{n\t n}$ be   measurable functions
such that 
%$|f(t)|\le L$, 
$|g(t,s)|\le L$ and 
$|h(t,s)|\le L$ for all   $t,s\in [0,T]$. Then 
there exist $\nu,b\in [0,\infty)$,
depending polynomially on $L, n, T$,
such that 
\begin{enumerate}[(1)]
\item
\label{item:subexp_g}
$\int_0^T \big(\int_0^t g(t,s)\,\d W_s\big)^\top\, \d W_t$,
\item
\label{item:subexp_gh}
$  \int_0^T \big(\int_0^t g(t,s)\,\d W_s\big)^\top
\big(\int_0^t h(t,s)\,\d W_s\big)
\, \d t$ 
\end{enumerate}
are  $(\nu,b)$-sub-exponential,
%\begin{enumerate}[(1)]
%\item 
%$ \int_0^T f(t)^\top \int_0^t g(t,s)\,\d W_s \d t$
%is sub-Gaussian,
%
%\item
%\label{item:VU}
% with $\nu=LT\sqrt{n}$ and $b=2LT$.
%\end{enumerate}
\end{Lemma}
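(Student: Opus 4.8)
The plan is to show both random variables have all exponential moments $\sE[e^{\lambda(\cdot-\mu)}]$ finite and bounded by $e^{\nu^2\lambda^2/2}$ for $|\lambda|<1/b$, with $\nu,b$ controlled polynomially in $L,n,T$. The natural tool is the Wiener chaos / hypercontractivity machinery: both integrands are (at most) second-order Wiener chaos, i.e.\ live in $\oplus_{k\le 2}\mathcal{H}_k$, and second-chaos random variables are classically sub-exponential. Concretely, for Item \ref{item:subexp_g} the quantity $\int_0^T(\int_0^t g(t,s)\,\d W_s)^\top\,\d W_t$ can be written (componentwise) as a Wiener--It\^o double integral $\int_0^T\int_0^t \sum_{k} g_{kj}(t,s)\,\d W^k_s\,\d W^j_t$, which has mean zero by the martingale property of the It\^o integral, and Item \ref{item:subexp_gh} decomposes, via It\^o's isometry applied pathwise, into a deterministic part $\int_0^T\int_0^t \tr(g(t,s)^\top h(t,s))\,\d s\,\d t$ (bounded in absolute value by $n L^2 T^2$, so constant) plus a mean-zero second-chaos term. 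So it suffices to handle a generic mean-zero element of the second chaos with an explicit bound on its $L^2$ norm.

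First I would reduce to scalar components, since a finite sum of $(\nu_i,b_i)$-sub-exponential variables is $(\sum_i\nu_i,\max_i b_i)$-sub-exponential (the class is a vector space, as noted in the excerpt), and $n$ is fixed; this costs only a polynomial-in-$n$ factor. Then for a scalar double It\^o integral $F=\int_0^T f_t^\top \d W_t$ with $f_t=\int_0^t G(t,s)\,\d W_s$ and $|G|\le L$, I would estimate $\sE[F^2]$ by It\^o isometry twice: $\sE[F^2]=\sE[\int_0^T |f_t|^2\,\d t]=\int_0^T\int_0^t |G(t,s)|^2\,\d s\,\d t\le \tfrac12 L^2 T^2$, and more generally control all moments $\sE[F^{2p}]$. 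The cleanest route is the hypercontractive / Nelson estimate for Wiener chaos: for $F$ in the chaos of order $\le q$, $\|F\|_{L^{2p}}\le (2p-1)^{q/2}\|F\|_{L^2}$, hence $\|F\|_{L^{2p}}\le (2p-1)\,\sigma$ with $\sigma^2:=\tfrac12 L^2 T^2$ (taking $q=2$). From the moment bound $\sE[|F-\mu|^{2p}]\le C^p (p!)^2 \sigma^{2p}$ (which follows from $\|F-\mu\|_{L^{2p}}\lesssim p\,\sigma$ together with Stirling), a standard summation of the exponential series gives $\sE[e^{\lambda(F-\mu)}]\le e^{\nu^2\lambda^2/2}$ for all $|\lambda|<1/b$ with, say, $\nu = c\,\sigma$ and $b=c\,\sigma$ for an absolute constant $c$; tracking $\sigma= L T/\sqrt2$ and the $n$ from the componentwise reduction yields $\nu,b$ polynomial in $L,n,T$ as claimed. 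For Item \ref{item:subexp_gh} I add back the deterministic (constant) part, which only shifts the mean and does not affect sub-exponentiality.

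An alternative, more self-contained route avoiding chaos estimates is to exploit the martingale structure directly: write $F=\int_0^T f_t^\top \d W_t$ with $\la f\ra_t = \int_0^t |f_s|^2\,\d s$, use that $|f_t|^2\le L^2 n \cdot t\cdot(\text{a squared Gaussian-type norm})$, and apply exponential martingale inequalities / a Girsanov-type change of measure after first establishing that $\int_0^T|f_t|^2\,\d t$ itself has a Gaussian-type tail (it is a ``chi-square of a Gaussian process'', hence sub-exponential with parameters controlled by $\sup_t\sE[|f_t|^2]\le \tfrac12 L^2 T$ and the trace $\int_0^T\sE[|f_t|^2]\,\d t$). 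One can then condition on the quadratic variation and bound $\sE[e^{\lambda F}\mid \la f\ra_T]\le e^{\lambda^2\la f\ra_T/2}$, followed by the sub-exponential bound on $\la f\ra_T$. This second approach handles both items uniformly, since $\int_0^t g(t,s)\,\d W_s$ and $\int_0^t h(t,s)\,\d W_s$ are jointly Gaussian and their product integrated in $t$ is again a ``quadratic form in a Gaussian process''.

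The main obstacle is bookkeeping the dependence of $(\nu,b)$ on $(L,n,T)$ so that it is genuinely polynomial — the hypercontractivity constants are absolute, but converting moment bounds into sub-exponential parameters, and carrying the $n$ from the matrix-to-scalar reduction and the $T^2$ from the double time-integral, requires care; I expect a clean statement like $\nu = C n L T$, $b = C n L T^2$ (or similar) to drop out, but the exact exponents need to be pinned down. A secondary subtlety is the non-symmetry of $g$ (so $G(t,s)$ is only defined for $s<t$) and the off-diagonal versus diagonal terms in the componentwise double integral; these are handled by treating $\int_0^T(\int_0^t G(t,s)\,\d W_s)\,\d W_t$ as an iterated It\^o integral (no diagonal correction needed, unlike a symmetric double integral), which is precisely why the mean is zero in Item \ref{item:subexp_g} and why only one deterministic term appears in Item \ref{item:subexp_gh}.
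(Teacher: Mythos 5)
Your proposal is correct, but it reaches the conclusion by a genuinely different route from the paper. The paper's proof of Item \ref{item:subexp_g} is a comparison argument: after normalizing $g$ by $L$, it invokes a domination result (citing Lemma 3.2 of the reference \cite{subexp_uni}) to bound $\mathbb{E}[\exp(2\lambda V^{g/L})]$ by $\mathbb{E}[\exp(2\lambda V^{I_n})]$, where $V^{I_n}=\int_0^T W_t^\top\,\d W_t=(|W_T|^2-nT)/2$ has the explicit chi-square moment generating function $\big((1-2\lambda T)^{-1/2}e^{-\lambda T}\big)^n$; an elementary inequality then converts this into the sub-exponential bound, and Item \ref{item:subexp_gh} is reduced to Item \ref{item:subexp_g} via It\^{o}'s formula and the stochastic Fubini theorem — essentially the same chaos decomposition (deterministic trace term plus iterated integrals) that you obtain from the product formula. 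You instead bypass the comparison lemma entirely: you place both quantities in the Wiener chaos of order at most two, use Nelson's hypercontractive estimate $\|F\|_{L^{2p}}\le(2p-1)\|F\|_{L^2}$ together with the It\^{o}-isometry computation $\|F\|_{L^2}^2\le\tfrac12 L^2T^2$, and sum the exponential series of the resulting moment bounds. Your route is more robust and self-contained (it requires no kernel-domination lemma and would extend verbatim to higher-order chaoses), at the cost of less explicit constants; the paper's route yields a sharper, fully explicit MGF bound but leans on the cited comparison result. Two small points of bookkeeping in your write-up: the vector-space closure of sub-exponential variables under \emph{dependent} sums gives parameters like $\big(\sum_i\nu_i,\ \max_i b_i\sum_j\nu_j/\nu_i\big)$ via H\"older rather than $(\sum_i\nu_i,\max_i b_i)$ exactly, which is still polynomial in $n$ and so harmless here; and when summing the series $\sum_k\lambda^k\mathbb{E}[(F-\mu)^k]/k!$ you must use that the $k=1$ term vanishes (centering) so that the bound is genuinely of the form $e^{\nu^2\lambda^2/2}$ rather than containing a linear-in-$|\lambda|$ exponent. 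Neither issue is a gap in substance.
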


\begin{proof}
We first prove Item \ref{item:subexp_g}
by assuming without loss of generality that 
$\|g(t,s)\|_2\le L$ for all $t,s\in [0,T]$,
and 
by defining
$V^q\coloneqq \int_0^T \left(\int_0^t q(t,s)\,\d W_s\right)^\top \d W_t$
for any bounded measurable function $q:[0,T]\t [0,T]\to \sR^{n\t n}$.
By  similar arguments as   \cite[Lemma 3.2]{subexp_uni}, 
we have   for all
$t\in [0,T]$ and
 $0\le \lambda<\frac{1}{2T}$, 
 $$\mathbb{E}[\exp(2\lambda V^{\frac{g}{L}})]\leq \mathbb{E}[\exp(2\lambda V^{I_n})]=\left(\frac{1}{\sqrt{1-2\lambda T }}\exp(-\lambda T)\right)^n.$$
As   $\frac{e^{-\lambda}}{\sqrt{1-2\lambda }} \leq e^{2\lambda^2}$ for all $|\lambda|\le 1/4$, 
we see 
 $\mathbb{E}[\exp(2\lambda V^{g/L})]\leq \exp(2n\lambda^2T^2)$
  for all  $0\le \lambda<\frac{1}{4T}$.
Consequently,
for all
$0\le \lambda<\frac{1}{2LT}$,
$$
\mathbb{E}[\exp(\lambda V^{g})] = 
\sE\left[\exp\left(2\frac{\lambda L}{2}V^{\frac{g}{L}}\right)\right]
\leq \exp\left(\frac{nL^2T^2\lambda^2}{2}\right).$$
Replacing $g$ by $-g$ shows the above estimate  holds for $| \lambda|<\frac{1}{2LT}$,
which implies  
the desired sub-exponential property of 
$V^g$.

For Item \ref{item:subexp_gh}, 
observe that 
for   each $t\in [0,T]$,
the  It\^{o} formula allows one to
express the product 
$\big(\int_0^t g(t,s)\,\d W_s\big)^\top
\big(\int_0^t h(t,s)\,\d W_s\big)$
as
a linear combination of double iterated It\^{o} integrals
and deterministic integrals.
Then 
the desired sub-exponential property follows from 
the stochastic Fubini theorem (see e.g., \cite{stochastic_fubini})
and Item \ref{item:subexp_g}.
\end{proof}

The following theorem establishes the concentration properties 
of the random variables involved in the least-squares estimators.

\begin{Theorem}\label{thm:concentration_ls}
Suppose (H.\ref{assum:ls}\ref{assum:pd}) holds and let $\Theta$ be a bounded subset of 
$\sR^{(n+d)\t n}$.
For each $\theta \in\Theta$ and $N\in \sN$, 
let 
$\psi^{\theta}$
be defined in \eqref{K2nonoise},
and 
$\psi^{\theta,\tau}$
be defined in \eqref{eq:K_pi}
with stepsize $\tau=T/N$.
Then there exist constants $C,\nu,b>0$ such that 
for all 
$\theta\in \Theta$,
$N,m\in \sN$
and $\epsilon>0$,
\begin{align*}
&\max\Big\{\sP(|V^{\psi^{\theta},m}-\sE[V^{\psi^{\theta}}]|\ge \epsilon),
\sP(|Y^{\psi^{\theta},m}-\sE[Y^{\psi^{\theta}}]|\ge \epsilon),
\\
&\q 
\sP(|V^{\psi^{\theta,\tau},\tau,m}-\sE[V^{\psi^{\theta,\tau},\tau}]|\ge \epsilon),
\sP(|Y^{\psi^{\theta,\tau},\tau,m}-\sE[Y^{\psi^{\theta,\tau},\tau}]|\ge \epsilon)\Big\}
\le C \exp\left(-\frac{1}{C} \min\left\{\frac{m \epsilon^2}{\nu^2},\frac{m \epsilon }{b}\right\}\right),
\end{align*}
where
$V^{\psi^{\theta}}$, $Y^{\psi^{\theta}}$,
$V^{\psi^{\theta},m}$, $Y^{\psi^{\theta},m}$
are defined in 
\eqref{eq:rv_ls_psi_conts},
and 
$V^{\psi^{\theta,\tau},\tau}$, $Y^{\psi^{\theta,\tau},\tau}$,
$V^{\psi^{\theta,\tau},\tau,m}$, $Y^{\psi^{\theta,\tau},\tau,m}$
are defined in
\eqref{eq:rv_ls_psi_discrete}.

\end{Theorem}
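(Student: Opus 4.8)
The plan is to derive an explicit representation of the state processes, reduce every entry of the four random matrices to a fixed linear combination of Gaussian terms and the two canonical iterated-integral functionals of Lemma \ref{uniform_sub_exp}, and then conclude with Bernstein's inequality (Lemma \ref{concentration}). \textbf{Closed forms and uniform bounds.} Since $\psi^{\theta}(t,x)=K^\theta_t x$, the state process $X^{\psi^{\theta}}$ of \eqref{lin_dyn} solves the linear SDE $\d X_t=(A^\star+B^\star K^\theta_t)X_t\,\d t+\d W_t$; writing $\Phi^\theta$ for the fundamental solution of $\tfrac{\d}{\d t}\Phi_t=(A^\star+B^\star K^\theta_t)\Phi_t$, $\Phi^\theta_0=I$, one has $X^{\psi^{\theta}}_t=\Phi^\theta_t x_0+\int_0^t\Phi^\theta_t(\Phi^\theta_s)^{-1}\,\d W_s$ and $Z^{\psi^{\theta}}_t=\begin{psmallmatrix} X^{\psi^{\theta}}_t \\ K^\theta_t X^{\psi^{\theta}}_t\end{psmallmatrix}$. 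By Lemma \ref{lemma:a_priori_riccati}\ref{item:C1_theta} and boundedness of $\Theta$, $\sup_{\theta\in\Theta}\|P^\theta\|_{C^1([0,T];\sR^{n\t n})}<\infty$, hence $K^\theta$, $A^\star+B^\star K^\theta_{\cdot}$, and therefore $\Phi^\theta_t$ and $(\Phi^\theta_t)^{-1}$ are bounded uniformly in $(t,\theta)\in[0,T]\t\Theta$ by a constant $L$.

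\textbf{Continuous-time entries are uniformly sub-exponential.} Substituting the representation of $X^{\psi^{\theta}}$ into $V^{\psi^{\theta}}=\int_0^T Z^{\psi^{\theta}}_t(Z^{\psi^{\theta}}_t)^\top\,\d t$ and into $Y^{\psi^{\theta}}=\int_0^T Z^{\psi^{\theta}}_t\big((A^\star+B^\star K^\theta_t)X^{\psi^{\theta}}_t\big)^\top\d t+\int_0^T Z^{\psi^{\theta}}_t(\d W_t)^\top$, each matrix entry becomes a deterministic number plus terms of three types: (i) $\int_0^T f(t)^\top\big(\int_0^t g(t,s)\,\d W_s\big)\d t$, which by the stochastic Fubini theorem equals a single It\^o integral $\int_0^T\tilde f(s)^\top\d W_s$ with $|\tilde f|$ bounded by a constant times $L$, hence Gaussian; (ii) $\int_0^T f(t)^\top\d W_t$, Gaussian; and (iii) the functionals of Lemma \ref{uniform_sub_exp}\ref{item:subexp_g} and \ref{item:subexp_gh} (after reducing the matrix-valued products to scalars entry by entry, e.g. by padding rows with zeros), with kernels bounded by a constant times $L$. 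Gaussians are sub-exponential, the class of sub-exponential variables is a vector space, and the parameters produced by Lemma \ref{uniform_sub_exp} depend only polynomially on $L,n,T$; hence every entry of $V^{\psi^{\theta}}-\sE[V^{\psi^{\theta}}]$ and of $Y^{\psi^{\theta}}-\sE[Y^{\psi^{\theta}}]$ is $(\nu_0,b_0)$-sub-exponential with $\nu_0,b_0$ independent of $\theta\in\Theta$ (the mean-zero Gaussian parts and the centred functionals survive after subtracting the mean).

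\textbf{Discrete-time case.} On each $[t_i,t_{i+1}]$ the process $X^{\psi^{\theta,\tau}}$ of \eqref{eq:sde_theta_pi} solves $\d X_t=(A^\star+B^\star K^{\theta,\tau}_{t_i})X_t\,\d t+\d W_t$, so $X^{\psi^{\theta,\tau}}_{t_i}=\Phi^{\theta,\tau}_i x_0+\int_0^{t_i}\Psi^{\theta,\tau}_i(s)\,\d W_s$ for transition matrices $\Phi^{\theta,\tau}_i$ and piecewise-defined kernels $\Psi^{\theta,\tau}_i$ built from the exponentials $e^{(A^\star+B^\star K^{\theta,\tau}_{t_k})\tau}$. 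By Proposition \ref{prop:riccati_conv_rate} together with Lemma \ref{lemma:a_priori_riccati}, the matrices $K^{\theta,\tau}_i$ are bounded uniformly in $(\theta,N,i)$, so $\|A^\star+B^\star K^{\theta,\tau}_{t_i}\|_2\le C$ and products of at most $N$ such exponentials are bounded by $e^{CT}$; hence $\Phi^{\theta,\tau}_i$ and $\Psi^{\theta,\tau}_i$ are bounded by a constant $L'$ uniform in $(\theta,N,i)$. Writing $V^{\psi^{\theta,\tau},\tau}=\int_0^T Z^{\psi^{\theta,\tau}}_{\eta(t)}(Z^{\psi^{\theta,\tau}}_{\eta(t)})^\top\d t$ with $\eta(t)=t_i$ on $[t_i,t_{i+1})$, and $Y^{\psi^{\theta,\tau},\tau}=\int_0^T Z^{\psi^{\theta,\tau}}_{\eta(t)}\big((A^\star+B^\star K^{\theta,\tau}_{\eta(t)})X^{\psi^{\theta,\tau}}_t\big)^\top\d t+\int_0^T Z^{\psi^{\theta,\tau}}_{\eta(t)}(\d W_t)^\top$, the expansion of Step 2 applies verbatim with kernels bounded by a constant times $L'$ (the piecewise-constant-in-$t$ structure is harmless for Lemma \ref{uniform_sub_exp}), so the entries of $V^{\psi^{\theta,\tau},\tau}-\sE[V^{\psi^{\theta,\tau},\tau}]$ and $Y^{\psi^{\theta,\tau},\tau}-\sE[Y^{\psi^{\theta,\tau},\tau}]$ are $(\nu_1,b_1)$-sub-exponential with $\nu_1,b_1$ independent of $\theta\in\Theta$ and $N\in\sN$.

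\textbf{Conclusion and main obstacle.} The $j$-th summand of each of $V^{\psi^{\theta},m},Y^{\psi^{\theta},m},V^{\psi^{\theta,\tau},\tau,m},Y^{\psi^{\theta,\tau},\tau,m}$ is an independent copy of the corresponding single-trajectory matrix of Steps 2--3. Setting $\nu=\max\{\nu_0,\nu_1\}$ and $b=\max\{b_0,b_1\}$, applying Lemma \ref{concentration} entrywise to the $1/m$-averages with threshold $\epsilon/c_{n,d}$, where $c_{n,d}$ controls the operator norm of an $(n+d)$-dimensional matrix by its largest entry, and taking a union bound over the at most $(n+d)^2$ entries yields the asserted inequality with $C$ depending only on $n,d,\nu,b$. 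The main obstacle is Steps 1--3: producing the uniform-in-$\theta$ (and, for the discrete estimators, uniform-in-$N$) bounds on all fundamental solutions and kernels --- the $N$-uniformity being exactly why Proposition \ref{prop:riccati_conv_rate} is needed rather than merely the existence theory for \eqref{eq:K_pi} --- and then carrying out the bookkeeping that rewrites each matrix entry of the four quantities as a fixed finite linear combination of the two functionals of Lemma \ref{uniform_sub_exp} plus Gaussian terms, so that the sub-exponential parameters it outputs (polynomial in $L$ and in $n,T$) are genuinely independent of $\theta$ and $N$. Once this reduction is in place, the probabilistic step is a routine application of Bernstein's inequality and a union bound.
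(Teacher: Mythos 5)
Your proposal is correct and follows essentially the same route as the paper's proof: represent the state processes via the fundamental solution, obtain uniform-in-$(\theta,N)$ bounds on $K^\theta$, $K^{\theta,\tau}$, $\Phi^\theta$ and their inverses from Lemma \ref{lemma:a_priori_riccati} and Proposition \ref{prop:riccati_conv_rate}, reduce each matrix entry to Gaussian terms plus the iterated-integral functionals of Lemma \ref{uniform_sub_exp}, and conclude with Bernstein's inequality applied entrywise together with a union bound. The only difference is that you spell out the Gaussian/stochastic-Fubini bookkeeping that the paper leaves implicit.
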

\begin{proof}
We first show 
there exist $\nu,b>0$ such that 
all entries of  
$V^{\psi^{\theta}}$,
$Y^{\psi^{\theta}}$
$Y^{\psi^{\theta,\tau},\tau}$,
$V^{\psi^{\theta,\tau},\tau}$
are $(\nu,b)$-sub-exponential
for all $\theta\in \Theta$ and $N\in \sN$.
By \eqref{eq:rv_ls_psi_conts}, we have
\begin{align*}
V^{\psi^{\theta}}
&=
\int_0^T 
\begin{pmatrix}
X^{\psi^{\theta}}_{t}
\\
K^\theta_t X^{\psi^{\theta}}_{t}
\end{pmatrix}
\begin{pmatrix}
(X^{\psi^{\theta}}_{t})^\top &
(K^\theta_t X^{\psi^{\theta}}_{t})^\top
\end{pmatrix}
\,\d t,
\q
Y^{\psi^{\theta}} 
=V^{\psi^{\theta}}
\theta^\star
+\int_0^T
\begin{pmatrix}
X^{\psi^{\theta}}_{t}
\\
K^\theta_t X^{\psi^{\theta}}_{t}
\end{pmatrix}
(\d W_{t})^\top.
\end{align*}
Moreover, 
applying the variation-of-constants formula
(see e.g., \cite[Theorem 3.1 p.~96]{mao2007stochastic})
to \eqref{lin_dyn} shows that 
$X^{\psi^{\theta}}_{t}=
\Phi^\theta_t \big(x_0+\int_0^t(\Phi^\theta_s)^{-1}\,\d W_s\big)
$ for all $t\in [0,T]$,
where 
$\Phi^\theta\in C([0,T]; \sR^{n\t n})$ is the fundamental  
solution of  
$\d \Phi^\theta_t=(A^\star+B^\star K^\theta_t)\Phi^\theta_t \,\d t$.
The continuity of $\sR^{(n+d)\t n}\ni \theta\mapsto K^\theta\in C([0,T];\sR^{d\t n})$ (cf.~Proposition \ref{lemma:a_priori_riccati})
and the boundedness of $\Theta$
implies that 
$K^\theta, \Phi^\theta, (\Phi^\theta)^{-1}$
are uniformly bounded 
 for all $\theta\in \Theta$.
Consequently,  from 
Lemma \ref{uniform_sub_exp}, 
 there exist $\nu,b>0$ such that 
all entries of 
$V^{\psi^{\theta}}$ and 
$Y^{\psi^{\theta}}$
are  $(\nu,b)$-sub-exponential. 

Similarly,  by  
 \eqref{eq:sde_theta_pi} 
 and \eqref{eq:rv_ls_psi_discrete}, 
\begin{align*}
V^{\psi^{\theta,\tau},\tau}
&=
\int_0^T 
\sum_{i=0}^{N-1}
\mathbf{1}_{[t_i,t_{i+1})}(t)
\begin{pmatrix}
X^{\psi^{\theta,\tau}}_{t_i}
\\
K^{\theta,\tau}_{t_i} X^{\psi^{\theta,\tau}}_{t_i}
\end{pmatrix}
\begin{pmatrix}
(X^{\psi^{\theta,\tau}}_{t_i})^\top &
(K^{\theta,\tau}_{t_i} X^{\psi^{\theta,\tau}}_{t_i})^\top
\end{pmatrix}
\,\d t,
\\
Y^{\psi^{\theta,\tau},\tau} 
&
=\int_0^T 
\sum_{i=0}^{N-1}
\mathbf{1}_{[t_i,t_{i+1})}(t)
\begin{pmatrix}
X^{\psi^{\theta,\tau}}_{t_i}
\\
K^{\theta,\tau}_{t_i} X^{\psi^{\theta,\tau}}_{t_i}
\end{pmatrix}
\begin{pmatrix}
(X^{\psi^{\theta,\tau}}_{t})^\top &
(K^{\theta,\tau}_t X^{\psi^{\theta,\tau}}_{t})^\top
\end{pmatrix}
(\theta^\star)^\top\,\d t
\\
&\q +\int_0^T
\sum_{i=0}^{N-1}
\mathbf{1}_{[t_i,t_{i+1})}(t)
\begin{pmatrix}
X^{\psi^{\theta,\tau}}_{t_i}
\\
K^{\theta,\tau}_{t_i} X^{\psi^{\theta,\tau}}_{t_i}
\end{pmatrix}
(\d W_{t})^\top,
\end{align*}
where 
$X^{\psi^{\theta,\tau}}_{t}=
\Phi^{\theta,\tau}_t \big(x_0+\int_0^t(\Phi^{\theta,\tau}_s)^{-1}\,\d W_s\big)
$ for all $t\in [0,T]$,
and 
$\Phi^{\theta,\tau}\in C([0,T]; \sR^{n\t n})$ is the fundamental  
solution of  
$\d \Phi^{\theta,\tau}_t=(A^\star+B^\star K^{\theta,\tau}_t)\Phi^{\theta,\tau}_t \,\d t$.
By  Proposition \ref{prop:riccati_conv_rate},
$K^{\theta,\tau}, \Phi^{\theta,\tau}, (\Phi^{\theta,\tau})^{-1}$ are uniformly bounded for all $\theta\in \Theta$ 
and $N\in \sN$,
which along with
Lemma \ref{uniform_sub_exp} 
leads to 
the desired sub-exponential properties of 
$Y^{\psi^{\theta,\tau},\tau}$ and
$V^{\psi^{\theta,\tau},\tau}$.

Finally, since  
$\sP(|\sum_{i=1}^\ell X_i|\ge \epsilon)
\le \sum_{i=1}^\ell
\sP(|X_i|\ge \epsilon/\ell)$
for all
$\ell\in \sN$ and 
 random variables $(X_i)_{i=1}^\ell$, 
 we can apply
Lemma \ref{concentration}
to each component of 
$V^{\psi^{\theta}}$,
$Y^{\psi^{\theta}}$
$Y^{\psi^{\theta,\tau},\tau}$ and
$V^{\psi^{\theta,\tau},\tau}$,
and 
 conclude the desired concentration inequality 
with a constant $C$ depending polynomially on $n,d$.
\end{proof}

\subsection{Regret analysis of continuous-time least-squares algorithm}
\label{sec:regret_continuous}

This section is devoted to the proof of 
 Theorem \ref{thm:regret_continuous}, which consists of three steps:
(1)
{\color{black}
We first 
quantify  
the performance gap 
between applying feedback controls for an incorrect model and that for the true model;
our proof exploits  the stability
of Riccati equations established  in  Lemma \ref{lemma:a_priori_riccati};
}
(2) 
We then 
 estimate the parameter estimation error 
in terms of the number of learning episodes
based on the sub-exponential tail behavior of the least-squares estimator \eqref{reg_ls};
%(see Theorem \ref{thm:concentration_ls});
(3) Finally, we estimate the regret    for the feedback controls $(\psi^{ \theta_\ell})_{\ell\in \sN}$ in Algorithm \ref{alg:conts_ls}, thus establishing Theorem \ref{thm:regret_continuous}.

\paragraph{Step 1: Analysis of the performance gap.}

We start by  establishing a quadratic expansion of the cost function
at any open-loop control.

\begin{Proposition}\label{prop:taylor_expansion}

Suppose (H.\ref{assum:ls}\ref{assum:pd}) holds.
Let 
 $\psi^{\theta^\star}$ be defined in
\eqref{K1},
$X^{{\theta^\star}}$
be  the state process
 associated with   
$\psi^{ \theta^\star}$ (cf.~\eqref{eq:state_psi_theta}),
and 
$U^{\theta^\star}\in \cH^2(\sR^d)$
be such that 
for all  $t\in [0,T]$,
$U^{\theta^\star}_t=\psi^{\theta^\star}(t, X^{{\theta^\star}}_t)$.
Then for all $U\in \cH^2(\sR^d)$,
\bb\label{eq:quadratic_gap}
J^{\theta^\star}(U)-J^{\theta^\star}(U^{\theta^\star})\leq %=
\|Q\|_2\|X^{\theta^\star,U}-X^{{\theta^\star}}\|^2_{\cH^2(\sR^n)}+\|R\|_2
\|U-U^{\theta^\star}\|^2_{\cH^2(\sR^d)},
\ee
where  
$X^{\theta^\star,U}$ is the state process controlled by $U$ (cf.~\eqref{dynamics}),
and 
$J^{\theta^\star}:\cH^2(\sR^d)\to \sR$ is defined in \eqref{LQ}.

\end{Proposition}

\begin{proof}
For notational simplicity, 
for all 
 $U\in \cH^2(\sR^d)$
and $\epsilon>0$,  
we write 
 $U^\epsilon=U^{{\theta^\star}}+\epsilon (U-U^{{\theta^\star}})$,
 denote by 
$X^\epsilon=X^{\theta^\star,U^\epsilon}$
 the associated state process defined by \eqref{dynamics},
 and by $X^U=X^0=X^{\theta^\star,U}$.
The affineness of \eqref{dynamics} implies that 
$X^\epsilon=(1-\epsilon)X^{\theta^\star}+\epsilon X^U$ for all $\epsilon>0$. Hence,
for all $U\in \cH^2(\sR^d)$,
\begin{align*}
&\lim_{\epsilon\searrow 0}\frac{1}{\epsilon}\left(J^{\theta^\star}(U^\epsilon)-J^{\theta^\star}(U^{\theta^\star})\right)\\
&=\lim_{\epsilon\searrow 0}\frac{1}{\epsilon}
\mathbb{E}\bigg[\int_0^T
\bigg(
 \Big((1-\epsilon)X^{\theta^\star}_t+\epsilon X^U_t\Big)^{\top} 
Q
\Big((1-\epsilon)X^{\theta^\star}_t+\epsilon X^U_t\Big) 
-(X^{\theta^\star}_t)^{\top} Q X^{\theta^\star}_t 
 \\
&\q +
 \left((1-\epsilon)U^{{\theta^\star}}_t+\epsilon U_t\right)^{\top} R
 \left((1-\epsilon)U^{{\theta^\star}}_t+\epsilon U_t\right)
 -
 (U^{{\theta^\star}}_t)^{\top}  RU^{{\theta^\star}}_t
 \bigg)
 \,\d t
 \bigg]\\
&=\lim_{\epsilon\searrow 0} \epsilon \, \mathbb{E}\left[\int_0^T
\Big(
(X^U_t-X^{\theta^\star}_t)^{\top} Q(X^U_t-X^{\theta^\star}_t)+(U_t-U_t^{{\theta^\star}})^{\top} R(U_t-U_t^{{\theta^\star}})
\Big)\, \d t\right] \\
& \qquad +2 \mathbb{E}\left[\int_0^T \left((X^U_t-X^{\theta^\star}_t)^{\top} QX_t^{\theta^\star}+(U_t-U_t^{{\theta^\star}})^{\top} RU_t^{{\theta^\star}} \right)\,\d t\right]\\
&= 2 \mathbb{E}\left[\int_0^T \left((X^U_t-X^{\theta^\star}_t)^{\top} QX_t^{\theta^\star}+(U_t-U_t^{{\theta^\star}})^{\top} RU_t^{{\theta^\star}} \right)\,\d  t\right],
\end{align*}
which is based on the fact that  $X^U-X^{\theta^\star}\in {\cH^2(\sR^n)}$ and $U-U^\star\in {\cH^2(\sR^d)}$.
As
$U^\theta$ is the optimal control of $J^{\theta^\star}$,
 $ J^{\theta^\star}({U})\ge J^{\theta^\star}(U^{\theta^\star})$ for all ${U}\in \cH^2(\sR^d)$.
 Hence 
 for all $U\in \cH^2(\sR^d)$,
\bb\label{eq:one_side_limit}
\mathbb{E}\left[\int_0^T \left((X^U_t-X^{\theta^\star}_t)^{\top} QX_t^{\theta^\star}+(U_t-U_t^{{\theta^\star}})^{\top} RU_t^{{\theta^\star}} \right)\,\d  t\right]
=
\lim_{\epsilon\searrow 0 }\frac{1}{2\epsilon}(J^{\theta^\star}(U^\epsilon)-J^{\theta^\star}(U^{{\theta^\star}}))
\ge 0.
\ee
{\color{black}
We now prove that the above quantity is in fact zero for all $U\in \cH^2(\sR^d)$. 
To this end, let $U\in \cH^2(\sR^d)$ be a given (open-loop) control, and consider 
$\tilde{U}=U^{{\theta^\star}}-(U-U^{{\theta^\star}})$. 
Then by the  affineness of \eqref{dynamics},
$X^{\tilde{U}} -X^{{\theta^\star}}$ satisfies the following controlled dynamics:
\bb\label{eq:U_U^theta_dynamics}
\d X_t=(A^\star X_t- B^\star (U-U^{{\theta^\star}})_t)\,\d t, \q t\in [0,T];
\q X_0=0.
\ee
Moreover,
one can  verify 
by the  affineness of \eqref{dynamics} that
$-(X^{{U}} -X^{{\theta^\star}})$ also satisfies the  dynamics \eqref{eq:U_U^theta_dynamics},
which along with 
the uniqueness of solutions to 
\eqref{eq:U_U^theta_dynamics}
shows that $X^{\tilde{U}} -X^{{\theta^\star}}=-(X^{{U}} -X^{{\theta^\star}})$.
Therefore, applying \eqref{eq:one_side_limit} with $U=\tilde{U}$ implies that 
\begin{align*}
&0\le  \mathbb{E}\left[\int_0^T \left((X^{\tilde{U}}_t-X^{\theta^\star}_t)^{\top} QX_t^{\theta^\star}
 +(\tilde{U}_t-U_t^{{\theta^\star}})^{\top} RU_t^{{\theta^\star}} \right)\,\d  t\right]
 \\
 &=- \mathbb{E}\left[\int_0^T \left((X^{{U}}_t-X^{\theta^\star}_t)^{\top} QX_t^{\theta^\star}
 +({U}_t-U_t^{{\theta^\star}})^{\top} RU_t^{{\theta^\star}} \right)\,\d  t\right]
 \le 0.
\end{align*}
Hence for all $U\in \cH^2(\sR^d)$,
$$
 \mathbb{E}\left[\int_0^T \left((X^U_t-X^{\theta^\star}_t)^{\top} QX_t^{\theta^\star}+(U_t-U_t^{{\theta^\star}})^{\top} RU_t^{{\theta^\star}} \right)\,\d  t\right]=0,
 $$
}
which leads to the desired result  \eqref{eq:quadratic_gap}  due to the following identify:
\[
\begin{split}
 J^{\theta^\star}(U)-J^{\theta^\star}(U^{\theta^\star})
&
=\mathbb{E}\left[\int_0^T ((X^U_t)^{\top} QX^U_t
-(X^{\theta^\star}_t)^\top QX^{\theta^\star}_t
+U_t^{\top} RU_t-(U_t^{\theta^\star})^\top RU_t^{\theta^\star})\,\d t\right]\\
&=\mathbb{E}\left[\int_0^T
\left(
(X^U_t)^{\top} QX^U_t-(X^{\theta^\star}_t)^\top QX^{\theta^\star}_t+U_t^{\top} RU_t-(U_t^{\theta^\star})^\top RU_t^{\theta^\star}
\right)\,\d t\right]
\\
&\q 
-
2\mathbb{E}\left[\int_0^T
\left((X^U_t-X^{\theta^\star}_t)^{\top} QX_t^{\theta^\star}+(U_t-U_t^{{\theta^\star}})^{\top} RU_t^{{\theta^\star}} \right)\,\d  t
\right]\\
%%&\qquad\quad-\mathbb{E}\left[\int_0^T \left((X_t-X_t^\star)^{\top} QX_t^\star+(U_t-U_t^\star)^{\top} RU_t^\star \right)dt\right]\\
%&=\mathbb{E}\left[\int_0^T\left(X_t^{\top} Q(X_t-X_t^\star)+ U_t^{\top} R(U_t-U_t^\star)\right)dt\right]\\
%&=\mathbb{E}\left[\int_0^T\left(X_t^{\top} Q(X_t-X_t^\star)+ U_t^{\top} R(U_t-U_t^\star)\right)dt\right]\\
%&\qquad\quad- \mathbb{E}\left[\int_0^T \left((X_t-X_t^\star)^{\top} QX_t^\star+(U_t-U_t^\star)^{\top} RU_t^\star \right)dt\right]\\
&=\mathbb{E}\left[\int_0^T
\left(
(X^U_t-X_t^{\theta^\star})^{\top} Q (X^U_t-X_t^{\theta^\star})  +(U_t-U_t^{\theta^\star})^{\top} R(U_t-U_t^{\theta^\star})
\,\d t
\right)
\right].
\end{split}
\]
\end{proof}

Armed with Proposition \ref{prop:taylor_expansion},
the following proposition quantifies the quadratic performance gap of a greedy policy $\psi^\theta$.
\begin{Proposition}\label{prop:performance_gap}
Suppose (H.\ref{assum:ls}\ref{assum:pd}) holds
and 
let  $\Theta$ be a bounded subset of 
$\sR^{(n+d)\t n}$.
For each $\theta \in\Theta$, 
let 
$\psi^{\theta}$
be defined in \eqref{K2nonoise},
let
$X^{\psi^{\theta}}$
be  the state process
 associated with    $\psi^{ \theta}$
 (cf.~\eqref{lin_dyn}),
let $\psi^{\theta^\star}$ be defined in
\eqref{K1},
and let  
$X^{{\theta^\star}}$
be  the state process
 associated with   
$\psi^{ \theta^\star}$ (cf.~\eqref{eq:state_psi_theta}).
Then there exists a constant $C$ such that 
$$
|J^{\theta^\star}(U^{\psi^{\theta}})-J^{\theta^\star}(U^{\theta^\star})|
\le C|\theta-\theta^\star|^2, \q \fa \theta\in \Theta,
$$
where $U^{\psi^{\theta}}_t=\psi^{\theta}(t,X^{\psi^{\theta}}_t)$ and 
$U^{\theta^\star}_t=\psi^{\theta^\star}(t, X^{{\theta^\star}}_t)$ for all $t\in [0,T]$,
and  $J^{\theta^\star}$ is defined in \eqref{LQ}.

\end{Proposition}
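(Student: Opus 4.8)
The plan is to express the performance gap as a quadratic functional of the state trajectory and then exploit the Lipschitz dependence of the feedback gain $K^\theta$ on $\theta$, which follows from the $C^1$-regularity in Lemma~\ref{lemma:a_priori_riccati}\ref{item:C1_theta}. First I would fix $\theta\in\Theta$ and write $U^{\psi^\theta}_t = K^\theta_t X^{\psi^\theta}_t$, so that
\[
J^{\theta^\star}(U^{\psi^\theta}) = \sE\!\left[\int_0^T (X^{\psi^\theta}_t)^\top\big(Q + (K^\theta_t)^\top R K^\theta_t\big)X^{\psi^\theta}_t\,\d t\right],
\]
and likewise $J^{\theta^\star}(U^{\theta^\star}) = \sE[\int_0^T (X^{\theta^\star}_t)^\top(Q + (K^{\theta^\star}_t)^\top R K^{\theta^\star}_t)X^{\theta^\star}_t\,\d t]$. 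Both state processes solve linear SDEs with the same noise: $\d X^{\psi^\theta}_t = (A^\star + B^\star K^\theta_t)X^{\psi^\theta}_t\,\d t + \d W_t$ and $\d X^{\theta^\star}_t = (A^\star + B^\star K^{\theta^\star}_t)X^{\theta^\star}_t\,\d t + \d W_t$. Using the variation-of-constants formula (as already invoked in the proof of Theorem~\ref{thm:concentration_ls}), $X^{\psi^\theta}_t = \Phi^\theta_t(x_0 + \int_0^t (\Phi^\theta_s)^{-1}\,\d W_s)$ with $\Phi^\theta$ the fundamental matrix of $A^\star + B^\star K^\theta_\cdot$, and similarly for $X^{\theta^\star}$.

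Next I would establish the two ingredients needed for a perturbation estimate. The first is that $\theta\mapsto K^\theta$ is Lipschitz from $\Theta$ into $C([0,T];\sR^{d\times n})$: since $K^\theta_t = -R^{-1}B^\top P^\theta_t$ and $\theta\mapsto P^\theta$ is $C^1$ into $C^1([0,T];\sR^{n\times n})$ by Lemma~\ref{lemma:a_priori_riccati}, on the bounded set $\Theta$ we get $\|K^\theta - K^{\theta^\star}\|_{C([0,T];\sR^{d\times n})} \le C|\theta-\theta^\star|$, and in particular $\|K^\theta\|$ is uniformly bounded. The second is a Lipschitz (in fact linear) bound on the state error: from the SDEs, $\delta X_t := X^{\psi^\theta}_t - X^{\theta^\star}_t$ solves $\d(\delta X_t) = (A^\star+B^\star K^\theta_t)\delta X_t\,\d t + B^\star(K^\theta_t - K^{\theta^\star}_t)X^{\theta^\star}_t\,\d t$ with $\delta X_0 = 0$ (the Brownian terms cancel), so Gronwall gives $\sup_{t}\sE[|\delta X_t|^2] \le C\,\|K^\theta-K^{\theta^\star}\|^2_{C([0,T])}\,\sup_t\sE[|X^{\theta^\star}_t|^2] \le C|\theta-\theta^\star|^2$, using that $\sE[\sup_t|X^{\theta^\star}_t|^2]<\infty$. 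I would also record the uniform second-moment bound $\sup_{\theta\in\Theta}\|X^{\psi^\theta}\|_{\cS^2}\le C$, again via variation-of-constants and uniform boundedness of $\Phi^\theta,(\Phi^\theta)^{-1}$.

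Finally I would combine these by a telescoping/bilinear expansion. Writing $S^\theta_t := Q + (K^\theta_t)^\top R K^\theta_t$ and $S^{\theta^\star}_t$ analogously, we have $\|S^\theta_t - S^{\theta^\star}_t\|_2 \le C|\theta-\theta^\star|$ (Lipschitz in $K$, uniform boundedness of $K$) and $\|S^\theta_t\|_2 \le C$ uniformly. Then
\[
J^{\theta^\star}(U^{\psi^\theta}) - J^{\theta^\star}(U^{\theta^\star}) = \sE\!\int_0^T\!\Big[(X^{\psi^\theta}_t)^\top S^\theta_t X^{\psi^\theta}_t - (X^{\theta^\star}_t)^\top S^{\theta^\star}_t X^{\theta^\star}_t\Big]\d t,
\]
which splits as $\sE\!\int_0^T (X^{\psi^\theta}_t)^\top(S^\theta_t - S^{\theta^\star}_t)X^{\psi^\theta}_t\,\d t$ plus $\sE\!\int_0^T[(X^{\psi^\theta}_t)^\top S^{\theta^\star}_t X^{\psi^\theta}_t - (X^{\theta^\star}_t)^\top S^{\theta^\star}_t X^{\theta^\star}_t]\,\d t$; the latter is $\sE\!\int_0^T (\delta X_t)^\top S^{\theta^\star}_t(X^{\psi^\theta}_t + X^{\theta^\star}_t)\,\d t$. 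Bounding each term by Cauchy--Schwarz and the estimates above yields the first term $\le C|\theta-\theta^\star|\cdot\sup_t\sE[|X^{\psi^\theta}_t|^2]$ — wait, this is only $\cO(|\theta-\theta^\star|)$, so I must be more careful: the correct argument is that $\theta^\star$ is a \emph{minimizer}, so the first-order term vanishes. Concretely, one should instead compare $J^{\theta^\star}(U^{\psi^\theta})$ to $J^{\theta^\star}(U^{\theta^\star})$ using that $K^{\theta^\star}$ is optimal: the standard completion-of-squares identity for LQ problems gives $J^{\theta^\star}(U^{\psi^\theta}) - J^{\theta^\star}(U^{\theta^\star}) = \sE\!\int_0^T (U^{\psi^\theta}_t - K^{\theta^\star}_t X^{\psi^\theta}_t)^\top R (U^{\psi^\theta}_t - K^{\theta^\star}_t X^{\psi^\theta}_t)\,\d t = \sE\!\int_0^T (X^{\psi^\theta}_t)^\top (K^\theta_t - K^{\theta^\star}_t)^\top R (K^\theta_t - K^{\theta^\star}_t) X^{\psi^\theta}_t\,\d t$, which is manifestly $\ge 0$ and bounded by $C\|K^\theta - K^{\theta^\star}\|^2_{C([0,T])}\,\|X^{\psi^\theta}\|_{\cS^2}^2 \le C|\theta-\theta^\star|^2$. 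The main obstacle is precisely this point: one must use the optimality of $K^{\theta^\star}$ (via the LQ verification/completion-of-squares identity with $P^{\theta^\star}$) rather than a naive trajectory comparison, since the naive bound only gives the linear rate $\cO(|\theta-\theta^\star|)$; once the quadratic identity is in hand, the rest is routine Gronwall and moment estimates.
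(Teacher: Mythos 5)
Your final argument is correct, but it reaches the quadratic rate through a different identity than the paper's. The paper proves, via a first-variation computation that exploits the optimality of $U^{\theta^\star}$ and the affineness of $U\mapsto X^U$, the exact expansion
\[
J^{\theta^\star}(U)-J^{\theta^\star}(U^{\theta^\star})
=\sE\Big[\int_0^T\big((X^U_t-X^{\theta^\star}_t)^\top Q(X^U_t-X^{\theta^\star}_t)+(U_t-U^{\theta^\star}_t)^\top R(U_t-U^{\theta^\star}_t)\big)\,\d t\Big],
\]
and then bounds $\|X^{\psi^\theta}-X^{\theta^\star}\|_{\cS^2(\sR^n)}$ and $\|U^{\psi^\theta}-U^{\theta^\star}\|_{\cS^2(\sR^d)}$ by $C|\theta-\theta^\star|$ via Gronwall applied to the ODE satisfied by the difference of the two state trajectories. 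You instead invoke the completion-of-squares (verification) identity with $P^{\theta^\star}$, which writes the gap as $\sE\int_0^T(X^{\psi^\theta}_t)^\top(K^\theta_t-K^{\theta^\star}_t)^\top R(K^\theta_t-K^{\theta^\star}_t)X^{\psi^\theta}_t\,\d t$. Both identities express the same underlying fact---the cost is exactly quadratic around its minimizer---but yours factors the gap along the single perturbed trajectory $X^{\psi^\theta}$, so you need only the uniform bound $\|X^{\psi^\theta}\|_{\cH^2(\sR^n)}\le C$ and the Lipschitz estimate $\|K^\theta-K^{\theta^\star}\|_{C([0,T];\sR^{d\t n})}\le C|\theta-\theta^\star|$ (both of which you justify correctly from Lemma~\ref{lemma:a_priori_riccati} and the boundedness of $\Theta$), and the Gronwall comparison of $X^{\psi^\theta}$ with $X^{\theta^\star}$ becomes unnecessary. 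The price is that the completion-of-squares identity itself must be justified: apply It\^{o}'s formula to $t\mapsto (X^{\psi^\theta}_t)^\top P^{\theta^\star}_t X^{\psi^\theta}_t$, use \eqref{riccati1} and $P^{\theta^\star}_T=0$, and note that the stochastic integral has vanishing expectation because $X^{\psi^\theta}\in\cS^2(\sR^n)$ and $P^{\theta^\star}$ is bounded; this is standard but should be written out rather than merely cited. Your initial bilinear/telescoping attempt indeed yields only the linear rate, and you correctly diagnose why (the first-order term does not vanish under a naive trajectory comparison) before discarding it, so the false start does not affect the validity of the final argument.
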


\begin{proof}
For all $\theta\in \Theta$,
applying  Proposition \ref{prop:taylor_expansion}
with $U=U^{\psi^\theta}$ gives  
\begin{align}\label{eq:quadratic_gap_feedback}
\begin{split}
&J^{\theta^\star}(U^{\psi^\theta})-J^{\theta^\star}(U^{\theta^\star})
\\
&
\leq  \|Q\|_2\|X^{\theta^\star, U^{\psi^\theta}}-X^{{\theta^\star}}\|^2_{\cH^2(\sR^n)}+\|R\|_2
\|U^{\psi^\theta}-U^{\theta^\star}\|^2_{\cH^2(\sR^d)},
\\
&\leq  \|Q\|_2\|X^{{\psi^\theta}}-X^{\psi^{\theta^\star}}\|^2_{\cH^2(\sR^n)}+\|R\|_2
\|\psi^{\theta}(\cdot, X^{\psi^{\theta}}_\cdot)-\psi^{\theta^\star}(\cdot, X^{\psi^{\theta^\star}}_\cdot)\|^2_{\cH^2(\sR^d)},
\end{split}
\end{align}
where 
the last inequality used the fact that $X^{\theta^\star, U^{\psi^\theta}}=X^{{\psi^\theta}}$
(see  \eqref{lin_dyn}),
and the definitions of 
 $U^{\psi^{\theta}}$ and 
$U^{\theta^\star}$.
It remains to  
prove 
$$
\|X^{{\psi^\theta}}-X^{\psi^{\theta^\star}}\|_{\cH^2(\sR^n)}+
\|\psi^{\theta}(\cdot, X^{\psi^{\theta}}_\cdot)-\psi^{\theta^\star}(\cdot, X^{\psi^{\theta^\star}}_\cdot)\|_{\cH^2(\sR^d)}
\le C|\theta-\theta^\star|,
$$ 
for a constant $C$ independent of $\theta$.
 
Observe that 
by  \eqref{K2nonoise},
for all $(t,x)\in [0,T]\t \sR^n$,
 $\psi^\theta(t,x)= K^\theta_t x$
 with $K^\theta_t= -R^{-1} B^\top  P^\theta_t$.
Now 
by Lemma \ref{lemma:a_priori_riccati}
and the boundedness of $\Theta$,
there exists a constant $C\ge 0$  such that 
$\|P^\theta\|_{C([0,T;\sR^{n\t n})}\le C$ and 
$\|P^{\theta}-P^{\theta^\star}\|_{C([0,T;\sR^{n\t n})}\le C|\theta-\theta^\star|$
for all $\theta\in \Theta\cup\{\theta^\star\}$,
which along with $K^\theta_t=-R^{-1}B^\top P^\theta_t$ implies that 
$\|K^\theta\|_{C([0,T;\sR^{d\t n})}\le C$ and 
$\|K^{\theta}-K^{\theta^\star}\|_{C([0,T;\sR^{d\t n})}\le C|\theta-\theta^\star|$.
Moreover,  observe  from \eqref{eq:state_psi_theta} and  \eqref{lin_dyn} that
$X^{{\theta^\star}}_0= X^{\psi^{\theta}}_0$ and for all $t\in [0,T]$,
$$
\d(X^{\psi^{\theta^\star}}- X^{\psi^{\theta}})_t
=
\Big((A^\star+B^\star K^{\theta^\star}_t)(X^{\psi^{\theta^\star}}- X^{\psi^{\theta}})_t
+B^\star (K^{\theta^\star}_t-K^\theta_t)  X^{\psi^{\theta}}_t
\Big)\,\d t,
$$
which combined with
the boundedness of $K^{\theta^\star}$
and
  Gronwall's inequality leads to 
 \begin{align*}
 \|X^{\psi^{\theta^\star}}- X^{\psi^{\theta}}\|_{\cH^2(\sR^n)}
& \le C
\|X^{\psi^{\theta^\star}}- X^{\psi^{\theta}}\|_{\cS^2(\sR^n)}
\\
&\le C 
\|(K^{\theta^\star}-K^\theta)  X^{\psi^{\theta}}\|_{\cH^2(\sR^{d})}
\le C 
\|K^{\theta^\star}-K^\theta\|_{C([0,T;\sR^{d\t n})} \| X^{\psi^{\theta}}\|_{\cH^2(\sR^{n})}
\\
&\le C|\theta-\theta^\star|,
\q \fa \theta\in \Theta,
\end{align*}
where
 the last inequality
follows from  $\|X^{\psi^{\theta}}\|_{\cH^2(\sR^{n})}\le C$, 
as 
$K^\theta$ is uniformly  bounded.
The above inequality further implies  
\begin{align*}
& \|\psi^{\theta}(\cdot, X^{\psi^{\theta}}_\cdot)-\psi^{\theta^\star}(\cdot, X^{\psi^{\theta^\star}}_\cdot)\|_{\cH^2(\sR^d)}
=\|K^{\theta}_\cdot X^{\psi^{\theta}}_\cdot-K^{\theta^\star}_\cdot  X^{\psi^{\theta^\star}}_\cdot\|_{\cH^2(\sR^d)}
\\
&\le 
\|( K^{\theta}_\cdot -K^{\theta^\star}_\cdot ) X^{\psi^{\theta}}_\cdot\|_{\cH^2(\sR^d)}
+
\|
K^{\theta^\star}_\cdot  (X^{\psi^{\theta}}_\cdot - X^{\psi^{\theta^\star}}_\cdot)\|_{\cH^2(\sR^d)}
\\
&\le   
\|K^{\theta^\star}-K^\theta\|_{C([0,T;\sR^{d\t n})} \| X^{\psi^{\theta}}\|_{\cH^2(\sR^{n})}
+\|
K^{\theta^\star}\|_{C([0,T;\sR^{d\t n})} \| X^{\psi^{\theta}} - X^{\psi^{\theta^\star}}\|_{\cH^2(\sR^n)}
\\
&\le C|\theta-\theta^\star|,
\q \fa \theta\in \Theta,
\end{align*}
which along with \eqref{eq:quadratic_gap_feedback} finishes the desired estimate.
\end{proof}

\paragraph{Step 2: Error bound for parameter estimation.}

\begin{Proposition}\label{theta_conc}

Suppose  (H.\ref{assum:ls}\ref{assum:pd}) holds
and  let  $\Theta\subset \sR^{(n+d)\t n}$
such that 
 there exists
  $C_1>0$ 
satisfying
$\|(\sE[V^{\psi^{{\theta}}}])^{-1}\|_2\le C_1$
and $|\theta|\le C_1$
for all $\theta\in \Theta$,
with $V^{\psi^\theta}$  defined in  \eqref{eq:rv_ls_psi_conts}.
Then there exist constants  $\bar{C}_1,\bar{C}_2\ge 0$,
%independent of  $m$ and $\theta$, 
such that 
for all
$\theta\in \Theta$ and
 $\delta\in (0,1/2)$, if 
$m\geq \bar{C}_1(-\ln\delta )$, 
 then   with probability at least $1-2\delta$, 
\bb\label{eq:parameter_estimation}
|\hat{\theta}-\theta^\star|\leq \bar{C}_2\bigg(\sqrt{\frac{-\ln\delta}{m}}+\frac{-\ln\delta }{m}+\frac{(-\ln\delta)^{2}}{m^2}\bigg),
\ee
where 
$\hat{\theta}$ denotes the  right-hand side of \eqref{reg_ls}
 with the control $\psi^\theta$.
 % (computed from $\theta$). 
\end{Proposition}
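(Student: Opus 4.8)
\textbf{Proof proposal for Proposition \ref{theta_conc}.}

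The plan is to express $\hat\theta-\theta^\star$ in terms of the empirical and population versions of the random variables $V^{\psi^\theta}$ and $Y^{\psi^\theta}$, and then control each piece using Theorem \ref{thm:concentration_ls}. Recall from \eqref{reg_ls} that $\hat\theta=(V^{\psi^\theta,m}+\tfrac1m I)^{-1}Y^{\psi^\theta,m}$, while the identity \eqref{rewrTheta} gives $\theta^\star=(\sE[V^{\psi^\theta}])^{-1}\sE[Y^{\psi^\theta}]$; the latter is well-defined precisely because $\|(\sE[V^{\psi^\theta}])^{-1}\|_2\le C_1$ on $\Theta$. First I would fix $\delta\in(0,1/2)$ and set $\epsilon=\epsilon(m,\delta)$ to be the bound on the right of \eqref{eq:parameter_estimation}, up to constants; applying Theorem \ref{thm:concentration_ls} twice, on the event $\mathcal E$ of probability at least $1-2\delta$ we have simultaneously $\|V^{\psi^\theta,m}-\sE[V^{\psi^\theta}]\|_2\le C\sqrt{(-\ln\delta)/m}$ and $\|Y^{\psi^\theta,m}-\sE[Y^{\psi^\theta}]\|_2\le C\sqrt{(-\ln\delta)/m}$, where the square-root branch of Bernstein's inequality is the active one once $m\ge \bar C_1(-\ln\delta)$ forces $\epsilon$ small. (The norm equivalence between $|\cdot|$ and $\|\cdot\|_2$ on finite-dimensional matrix spaces absorbs dimensional constants.)

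Next I would argue that on $\mathcal E$ the perturbed matrix $V^{\psi^\theta,m}+\tfrac1m I$ is invertible with $\|(V^{\psi^\theta,m}+\tfrac1m I)^{-1}\|_2\le 2C_1$: indeed $\sE[V^{\psi^\theta}]\succeq \tfrac1{C_1}I$, the perturbation $V^{\psi^\theta,m}-\sE[V^{\psi^\theta}]$ has norm $\le \tfrac1{2C_1}$ once $m$ is large enough relative to $-\ln\delta$, and the extra $\tfrac1m I\succeq 0$ only helps; a Neumann-series estimate then gives the inverse bound. With both inverses controlled, I would write the standard decomposition
\[
\hat\theta-\theta^\star
=\big(V^{\psi^\theta,m}+\tfrac1m I\big)^{-1}
\Big(\big(Y^{\psi^\theta,m}-\sE[Y^{\psi^\theta}]\big)
-\big(V^{\psi^\theta,m}-\sE[V^{\psi^\theta}]+\tfrac1m I\big)\theta^\star\Big),
\]
using $\sE[Y^{\psi^\theta}]=\sE[V^{\psi^\theta}]\theta^\star$. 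Taking norms and using $|\theta^\star|\le C$, the inverse bound $2C_1$, the two concentration bounds, and $\|\tfrac1m I\|_2=\tfrac1m$, the right-hand side is bounded by $C\big(\sqrt{(-\ln\delta)/m}+\tfrac{-\ln\delta}{m}+\tfrac1m\big)$, which is dominated by the claimed $\bar C_2\big(\sqrt{(-\ln\delta)/m}+\tfrac{-\ln\delta}{m}+\tfrac{(-\ln\delta)^2}{m^2}\big)$.

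The one subtlety worth flagging—and what I expect to be the main obstacle—is bookkeeping the \emph{uniformity} of the constants over $\theta\in\Theta$. Theorem \ref{thm:concentration_ls} already supplies $C,\nu,b$ independent of $\theta$ (and of $N$, $m$), since it is proved via the uniform boundedness of $K^\theta,\Phi^\theta,(\Phi^\theta)^{-1}$ on the bounded set $\Theta$; and the hypothesis $\|(\sE[V^{\psi^\theta}])^{-1}\|_2\le C_1$ together with $|\theta|\le C_1$ is exactly the uniform control one needs on the population objects. So the argument goes through with $\bar C_1,\bar C_2$ depending only on $C_1,\nu,b,n,d$ and $|\theta^\star|$. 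The remaining care is to choose $\bar C_1$ large enough that $m\ge \bar C_1(-\ln\delta)$ simultaneously (i) makes the square-root branch of Bernstein dominate, (ii) shrinks $\|V^{\psi^\theta,m}-\sE[V^{\psi^\theta}]\|_2$ below $\tfrac1{2C_1}$ so the Neumann argument applies, and (iii) keeps $\epsilon<\tfrac12$ or so; all three are satisfied by taking $\bar C_1$ a suitable multiple of $\max\{C_1^2\nu^2,\,C_1 b\}$. The $(-\ln\delta)^2/m^2$ term in the statement is not actually needed for this continuous-time estimator—it is a harmless over-estimate retained so the bound matches the discrete-time analogue in Proposition \ref{theta_conc_discrete}—so I would simply note it dominates the $1/m$ term that appears.
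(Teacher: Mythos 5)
Your proposal is correct, and it follows the same overall strategy as the paper's proof (identity \eqref{rewrTheta} for $\theta^\star$, the uniform concentration bounds of Theorem \ref{thm:concentration_ls} applied to $V^{\psi^\theta,m}$ and $Y^{\psi^\theta,m}$, and an eigenvalue-perturbation argument to get $\|(V^{\psi^\theta,m}+\tfrac1m I)^{-1}\|_2\le 2C_1$ once $m\gtrsim -\ln\delta$). The one substantive difference is the algebraic decomposition. The paper adds and subtracts $(\sE[V^{\psi^\theta}])^{-1}Y^{\psi^\theta,m}$ and uses $E^{-1}-F^{-1}=F^{-1}(F-E)E^{-1}$, which forces it to bound $\|Y^{\psi^\theta,m}\|_2$ itself (by $C+\delta_m$) and therefore produces the product $(\delta_m+\tfrac1m)(1+\delta_m)$; the resulting $\delta_m^2$ term is exactly the source of the $(-\ln\delta)^2/m^2$ contribution in \eqref{eq:parameter_estimation}. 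You instead factor out $(V^{\psi^\theta,m}+\tfrac1m I)^{-1}$ and use $\sE[Y^{\psi^\theta}]=\sE[V^{\psi^\theta}]\theta^\star$, so the error is linear in the two concentration deviations plus $\tfrac1m|\theta^\star|$; this avoids bounding $\|Y^{\psi^\theta,m}\|_2$ and yields the strictly sharper bound $C\bigl(\sqrt{(-\ln\delta)/m}+(-\ln\delta)/m\bigr)$, of which the stated \eqref{eq:parameter_estimation} is a harmless over-estimate. Your closing remark is thus right in substance, though slightly off on provenance: the $(-\ln\delta)^2/m^2$ term is not retained merely to match the discrete-time analogue — it genuinely arises from the paper's own decomposition — but your cleaner decomposition shows it is not needed. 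The uniformity bookkeeping you flag is handled exactly as you describe and matches the paper.
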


\begin{proof}
Let us fix  $\delta\in(0,1/2)$ and $\theta\in \Theta$.
By 
\eqref{rewrTheta} and
\eqref{reg_ls}, we obtain
\begin{equation*}
\begin{split}
\|\hat{\theta}-\theta^\star\|_2
& = 
\|(V^{\psi^{{\theta}},m}+\tfrac{1}{m}I)^{-1}Y^{\psi^{{\theta}},m}-(\sE[V^{\psi^{{\theta}}}])^{-1}\sE[Y^{\psi^{{\theta}}}]\|_2
\\
&\le 
\|(V^{\psi^{{\theta}},m}+\tfrac{1}{m}I)^{-1}-(\sE[V^{\psi^{{\theta}}}])^{-1}\|_2\|Y^{\psi^{{\theta}},m}\|_2
+\|(\sE[V^{\psi^{{\theta}}}])^{-1}\|_2\|Y^{\psi^{{\theta}},m}-\sE[Y^{\psi^{{\theta}}}]\|_2.
\end{split}
\end{equation*}
As 
$E^{-1}-F^{-1}=F^{-1}(F-E)E^{-1}$
for all nonsingular matrices $E$ and $F$,
we have 
\begin{equation}
\label{eq:est}
\begin{split}
&\|\hat{\theta}-\theta^\star\|_2
\\
&\le 
\|(V^{\psi^{{\theta}},m}+\tfrac{1}{m}I)^{-1}\|_2\|(\sE[V^{\psi^{{\theta}}}])^{-1}\|_2\|Y^{\psi^{{\theta}},m}\|_2
\|V^{\psi^{{\theta}},m}-\sE[V^{\psi^{{\theta}}}]+\tfrac{1}{m}I\|_2
\\
&\q +\|(\sE[V^{\psi^{{\theta}}}])^{-1}\|_2\|Y^{\psi^{{\theta}},m}-\sE[Y^{\psi^{{\theta}}}]\|_2
\\
&\le
C_1\big(
\|(V^{\psi^{{\theta}},m}+\tfrac{1}{m}I)^{-1}\|_2 \|Y^{\psi^{{\theta}},m}\|_2
\|V^{\psi^{{\theta}},m}-\sE[V^{\psi^{{\theta}}}]+\tfrac{1}{m}I\|_2
+\|Y^{\psi^{{\theta}},m}-\sE[Y^{\psi^{{\theta}}}]\|_2
\big),
\end{split}
\end{equation}
where  the last inequality
follows from  the assumption $\|(\sE[V^{\psi^{{\theta}}}])^{-1}\|_2\le C_1$.

We now 
estimate each term in the right-hand side of \eqref{eq:est},
and denote by $C$ a generic constant independent of $\theta\in \Theta, \delta\in (0,1/2), m\in \sN$.
By Theorem 
\ref{thm:concentration_ls},
 with probability at least $1-2\delta$,
 $\|V^{\psi^{{\theta}},m}-\sE[V^{\psi^{{\theta}}}]\|_2 \leq \delta_m$
 and 
 $\|Y^{\psi^{{\theta}},m}-\sE[Y^{\psi^{{\theta}}}]\|_2
 \leq \delta_m$,
 with the constant $\delta_m$ given by 
 \bb\label{eq:delta_m}
 \delta_m\coloneqq
 C\max \left\{
\bigg(\frac{-\ln\delta  }{m} \bigg)^{\frac{1}{2}}, \frac{-\ln\delta  }{m} \right\}.
 \ee
% where a constant $C$ depends polynomially on the constants $\nu, b$ in  Theorem 
%\ref{thm:concentration_ls}.

 Let 
  $m$ be a sufficiently large constant satisfying 
 $\delta_m+1/m\le 1/(2C_1)$,
 where 
 $C_1$ is the constant 
such that   $\|(\sE[V^{\psi^{{\theta}}}])^{-1}\|_2\le C_1$ for all $\theta\in \Theta$.
Then 
 with probability at least $1-2\delta$, 
$ \|V^{\psi^{{\theta}},m}-\sE[V^{\psi^{{\theta}}}]+\tfrac{1}{m}I\|_2
 \le \delta_m+ \frac{1}{m}\le \tfrac{1}{2C_1}$,
 which in turn yields
 \begin{align*}
\lambda_{\min}( V^{\psi^{{\theta}},m}+\tfrac{1}{m})
\ge 
\lambda_{\min}( \sE[V^{\psi^{{\theta}}}])
-\|V^{\psi^{{\theta}},m}-\sE[V^{\psi^{{\theta}}}]+\tfrac{1}{m}I\|_2
\ge \tfrac{1}{2C_1},
 \end{align*}
 or equivalently $\|(V^{\psi^{{\theta}},m}+\tfrac{1}{m})^{-1}\|_2\leq 2C_1$.  
Moreover,  the continuity of $\sR^{(n+d)\t n}\ni \theta\mapsto \sE[Y^{\psi^{{\theta}}}]\in \sR$ implies
$\|Y^{\psi^{{\theta}},m}\|_2\le \|\sE[Y^{\psi^{{\theta}}}]\|_2+\|Y^{\psi^{{\theta}},m}-\sE[Y^{\psi^{{\theta}}}]\|_2
\le C+\|Y^{\psi^{{\theta}},m}-\sE[Y^{\psi^{{\theta}}}]\|_2$.
Hence,
by   \eqref{eq:est}, 
\begin{equation*}
\begin{split}
&|\hat{\theta}-\theta^\star|
\\
&\le
C\big(
(1+\|Y^{\psi^{{\theta}},m}-\sE[Y^{\psi^{{\theta}}}]\|_2)
\|V^{\psi^{{\theta}},m}-\sE[V^{\psi^{{\theta}}}]+\tfrac{1}{m}I\|_2
+\|Y^{\psi^{{\theta}},m}-\sE[Y^{\psi^{{\theta}}}]\|_2
\big)
\\
&\le 
C\left( (\delta_m+ \tfrac{1}{m})(1+ \delta_m)+\delta_m\right)
\le
C\left( \delta_m+\delta_m^2+ \tfrac{1}{m} \right).
\end{split}
\end{equation*}
Substituting \eqref{eq:delta_m} into the above estimate yields
the desired estimate \eqref{eq:parameter_estimation}.
As $\delta\in (0,1/2)$,
it is clear that 
 $\delta_m+1/m\le 1/(2C_1)$ is satisfied 
 for all $m\ge \bar{C}_1(-\ln \delta)$, with a sufficiently large $C_1$.
\end{proof}

\paragraph{Step 3: Proof of Theorem \ref{thm:regret_continuous}.} 

The following proposition
shows 
that for any given $\theta=(A,B)^\top\in \sR^{(n+d)\t n}$,
the full row rank of $K^\theta$ is  equivalent to
the well-definedness of 
 \eqref{rewrTheta} for all $\theta'$ sufficiently close to $\theta$.

\begin{Proposition}\label{prop:non_degenerate}
Suppose (H.\ref{assum:ls}\ref{assum:pd}) holds.
For each $\theta\in \sR^{(n+d)\t n}$,
let $V^{\psi^{\theta}}$  be  defined in
\eqref{eq:rv_ls_psi_conts}.
Then
for any 
$\theta=(A,B)^\top \in \sR^{(n+d)\t n}$,
 the following  properties are equivalent:
\begin{enumerate}[(1)]
\item\label{item:B_full_rank}
%  $B^\top Q B\in \sS^d_+$;
$\{v\in \sR^d\mid (K^\theta_t)^\top v=0,
\;
\fa t\in [0,T]\}=\{0\}$,
with $K^\theta$
 defined in \eqref{K2nonoise};
 \item\label{item:theta_B}
$\sE[V^{\psi^{{\theta}}}]\in \sS_+^{n+d}$;
\item
\label{item:theta_eps}
there exist $\lambda_0, \eps>0$ such that 
 $
\lambda_{\min}( \sE[V^{\psi^{{\theta}'}}])\ge \lambda_0$
for all
${\theta}'\in \Phi_\eps\coloneqq \{\theta'\in \sR^{(n+d)\t n}\mid |{\theta}'-\theta|\le \eps\}$,
where
$\lambda_{\min}(Z)$ is the minimum eigenvalue of 
 $Z\in \sS^{n+d}_0$.
\end{enumerate}

\end{Proposition}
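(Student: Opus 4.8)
The plan is to prove the cyclic chain \ref{item:theta_eps} $\Rightarrow$ \ref{item:theta_B} $\Rightarrow$ \ref{item:B_full_rank} $\Rightarrow$ \ref{item:theta_eps}, of which only the last implication carries real content. The implication \ref{item:theta_eps} $\Rightarrow$ \ref{item:theta_B} is immediate: take $\theta'=\theta\in\Phi_\eps$, so $\lambda_{\min}(\sE[V^{\psi^\theta}])\ge\lambda_0>0$, hence $\sE[V^{\psi^\theta}]\in\sS^{n+d}_+$. For \ref{item:theta_B} $\Rightarrow$ \ref{item:B_full_rank} I would argue by contraposition: if $B$ is not full column rank, pick $0\ne w\in\sR^d$ with $Bw=0$; then for the block vector $\zeta=(0,w)^\top\in\sR^{n+d}$ one computes $\zeta^\top \sE[V^{\psi^\theta}]\zeta=\sE[\int_0^T|w^\top\psi^\theta(t,X^{\psi^\theta}_t)|^2\,\d t]=\sE[\int_0^T|w^\top K^\theta_t X^{\psi^\theta}_t|^2\,\d t]$, but $K^\theta_t=-R^{-1}B^\top P^\theta_t$ so $w^\top K^\theta_t = -(R^{-1}B^\top P^\theta_t)^\top$-th component... more carefully, I would instead exhibit degeneracy using a left-null vector of $B$ in the first block; since $\psi^\theta$ only feeds $B^\star$ into the dynamics but $K^\theta$ carries $B^\top$, the cleanest route is: $Bw=0$ does not directly kill $w^\top K^\theta_t$, so instead observe that a rank deficiency of $B$ makes the $(n{+}d)$-dimensional process $Z^{\psi^\theta}$ live in a strict subspace only if one works with the row space of $B$; I would verify that the correct degenerate direction is $\zeta=(0,w)^\top$ with $w$ in the \emph{left} kernel of $B^\top$, equivalently the kernel of $B$ acting on the left, giving $w^\top K^\theta_t=-w^\top R^{-1}B^\top P^\theta_t$, which need not vanish—so the genuinely correct statement is that rank deficiency of $B$ is equivalent to a nontrivial relation $u^\top X^{\psi^\theta}+v^\top\psi^\theta(\cdot,X^{\psi^\theta})=0$, and I would reduce \ref{item:B_full_rank} to this via the representation of $\sE[V^{\psi^\theta}]$ as a Gram matrix of the coordinate functionals of $Z^{\psi^\theta}$ in $\cH^2$. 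Thus $\sE[V^{\psi^\theta}]\succ0$ iff the components of $Z^{\psi^\theta}$ are linearly independent in $\cH^2$, and this independence fails exactly when $B$ drops rank.

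The main work is \ref{item:B_full_rank} $\Rightarrow$ \ref{item:theta_eps}. Fix $A$ and set $\theta=(A,B)^\top$. First I would show $\sE[V^{\psi^\theta}]\succ 0$: suppose $\zeta=(u,v)^\top\in\sR^{n+d}$ satisfies $\zeta^\top\sE[V^{\psi^\theta}]\zeta=0$, i.e. $u^\top X^{\psi^\theta}_t+v^\top K^\theta_t X^{\psi^\theta}_t=0$ for $\d\sP\otimes\d t$-a.e. $(t,\omega)$. Using the variation-of-constants representation $X^{\psi^\theta}_t=\Phi^\theta_t(x_0+\int_0^t(\Phi^\theta_s)^{-1}\,\d W_s)$ from the proof of Theorem \ref{thm:concentration_ls} — where $\Phi^\theta$ is the fundamental matrix of $\dot\Phi=(A^\star+B^\star K^\theta)\Phi$, which is invertible for every $t$ — the vector $\eta_t\coloneqq(\Phi^\theta_t)^\top(u+(K^\theta_t)^\top v)$ satisfies $\eta_t^\top(x_0+\int_0^t(\Phi^\theta_s)^{-1}\,\d W_s)=0$ a.s. for a.e. $t$. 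Since $x_0+\int_0^t(\Phi^\theta_s)^{-1}\,\d W_s$ is a Gaussian vector whose covariance $\int_0^t(\Phi^\theta_s)^{-1}(\Phi^\theta_s)^{-\top}\,\d s$ is strictly positive definite for every $t>0$, the support of its law is all of $\sR^n$, forcing $\eta_t=0$, hence $u+(K^\theta_t)^\top v=0$, i.e. $u=R^{-1}B^\top P^\theta_t v$ for a.e. $t\in[0,T]$. Differentiating (or comparing at two times, using that $t\mapsto P^\theta_t$ is not constant when $Q\in\sS^n_+$) shows $B^\top(P^\theta_t-P^\theta_s)v=0$; combining with Lemma \ref{lemma:a_priori_riccati}\ref{item:psd} and the full-column-rank of $B$, I expect to conclude $v=0$ and then $u=0$. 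The delicate point here is extracting, from $u=R^{-1}B^\top P^\theta_t v$ holding for a.e.\ $t$, enough algebraic relations to kill $v$ — this is where I would use that $P^\theta$ is $C^1$ and solves \eqref{riccati_theta}, so $\frac{\d}{\d t}(B^\top P^\theta_t v)=0$ plus the Riccati identity yields $B^\top Q^{-}\!$-type relations, ultimately $B^\top P^\theta_t v$ constant in $t$ implies, via $P^\theta_T=0$, that $B^\top P^\theta_t v\equiv 0$, whence $u=0$ and, by Lemma \ref{lemma:a_priori_riccati}\ref{item:psd} applied to $P^\theta v$, $v=0$.

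Once $\sE[V^{\psi^\theta}]\succ0$ is established for each $\theta=(A,B)^\top$ with $B$ full rank, the uniform lower bound on a neighbourhood $\Phi_\eps$ follows from a continuity/compactness argument: the map $\sR^{(n+d)\times n}\ni\theta'\mapsto\sE[V^{\psi^{\theta'}}]$ is continuous (this uses continuity of $\theta'\mapsto K^{\theta'}$ from Lemma \ref{lemma:a_priori_riccati}\ref{item:C1_theta} and uniform integrability of $X^{\psi^{\theta'}}$ over $\theta'$ in a bounded set, both already invoked in the proof of Theorem \ref{thm:concentration_ls}), hence $\theta'\mapsto\lambda_{\min}(\sE[V^{\psi^{\theta'}}])$ is continuous; since it is positive at $\theta$, it stays $\ge\lambda_0\coloneqq\frac12\lambda_{\min}(\sE[V^{\psi^\theta}])$ on a small closed ball $\Phi_\eps$. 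The main obstacle is the algebraic step inside \ref{item:B_full_rank} $\Rightarrow$ \ref{item:theta_eps}: propagating the pointwise-in-time identity $u=R^{-1}B^\top P^\theta_t v$ into a contradiction, which requires carefully exploiting the terminal condition $P^\theta_T=0$ together with the Riccati dynamics and the positive-definiteness of $Q$.
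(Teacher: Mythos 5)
Your treatment of the main implication \ref{item:B_full_rank} $\Rightarrow$ \ref{item:theta_eps} is essentially the paper's argument: reduce $\zeta^\top\sE[V^{\psi^\theta}]\zeta=0$ to the pointwise identity $u+(K^\theta_t)^\top v=0$ via the variation-of-constants formula and the strict positive-definiteness of the Gaussian covariance, evaluate at $t=T$ (where $P^\theta_T=0$) to get $u=0$, then use Lemma \ref{lemma:a_priori_riccati}\ref{item:psd} together with the full column rank of $B$ to kill $v$; the passage to a uniform bound on $\Phi_\eps$ by continuity of $\theta'\mapsto\sE[V^{\psi^{\theta'}}]$ and of $\lambda_{\min}$ is also the same. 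Two remarks on that part: the identity you should carry is $u=P^\theta_t BR^{-1}v$, not $u=R^{-1}B^\top P^\theta_t v$ (the latter is not even dimensionally meaningful unless $n=d$, since $(K^\theta_t)^\top=-P^\theta_t BR^{-1}$); and no differentiation of the Riccati equation is needed — constancy in $t$ plus $P^\theta_T=0$ immediately gives $P^\theta_t BR^{-1}v\equiv 0$, after which Lemma \ref{lemma:a_priori_riccati}\ref{item:psd} yields $BR^{-1}v=0$ and full column rank yields $v=0$.

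The genuine gap is in \ref{item:theta_B} $\Rightarrow$ \ref{item:B_full_rank}. You correctly observe that the naive candidate $\zeta=(0,w)^\top$ with $Bw=0$ fails, because $w^\top K^\theta_t=-w^\top R^{-1}B^\top P^\theta_t=-(BR^{-1}w)^\top P^\theta_t$ need not vanish, but you then stop at the assertion that ``this independence fails exactly when $B$ drops rank'' without exhibiting a degenerate direction. The fix is one line, and it is what the paper does: take $w\neq 0$ with $Bw=0$ and set $v\coloneqq Rw\neq 0$; then $v^\top K^\theta_t=-(Rw)^\top R^{-1}B^\top P^\theta_t=-w^\top B^\top P^\theta_t=-(Bw)^\top P^\theta_t=0$ for all $t$, so $\zeta=(0,v)^\top$ satisfies $\zeta^\top Z^{\psi^\theta}_t\equiv 0$ and hence $\zeta^\top\sE[V^{\psi^\theta}]\zeta=0$, contradicting Item \ref{item:theta_B}. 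Without this (or an equivalent) construction, one direction of the claimed equivalence is unproved.
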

\begin{proof}
For 
$\ref{item:B_full_rank}
 \Longrightarrow
\ref{item:theta_B}$:
By \eqref{eq:rv_ls_psi_conts}, 
$\sE[V^{\psi^{\theta}}]\in \sS^{n+d}_+$ if and only if 
there exists no nonzero $v\in \sR^{n+d}$ such that
\bb\label{eq:non_degenerate_stochastic}
 \sE\left[\int_0^T 
v^\top Z^{\psi^{\theta}}_{t} (Z^{\psi^{\theta}}_{t})^\top v\,\d t
\right]
=
\int_0^T 
v^\top 
\begin{pmatrix}
I \\
K^{\theta}_t
\end{pmatrix}
 \sE\left[ 
 X^{\psi^{\theta}}_{t}
(X^{\psi^{\theta}}_{t})^\top
\right]
\begin{pmatrix}
I  &
(K^{\theta}_t)^\top
\end{pmatrix}
 v\,\d t
=0,
\ee
where we applied Fubini's theorem for the first identity. 
By \eqref{eq:state_psi_theta},
$X^{\psi^{\theta}}_{t}=
\Phi^{\theta}_t \big(x_0+\int_0^t(\Phi^{\theta}_s)^{-1}\,\d W_s\big)
$ for all $t\in [0,T]$,
where 
$\Phi^{\theta}\in C([0,T]; \sR^{n\t n})$ is the fundamental  
solution of  
$\d \Phi^{\theta}_t=(A^\star+B^\star K^{\theta}_t)\Phi^{\theta}_t \,\d t$, 
$K^\theta_t=-R^{-1}B^\top P^\theta_t$ for all $t\in[0,T]$,
and 
$P^\theta$ satisfies
\eqref{riccati2}.
Hence, 
$$
\sE\left[ 
 X^{\psi^{\theta}}_{t}
(X^{\psi^{\theta}}_{t})^\top
\right]
=
\Phi^{\theta}_t
\left(
x_0x_0^\top+
\int_0^t 
(\Phi^{\theta}_s)^{-1}
((\Phi^{\theta}_s)^{-1})^\top
\,\d s 
\right)
(\Phi^{\theta}_t)^\top
\in \sS^n_0,
\q \fa t\in[0,T].
$$
Then by
\eqref{eq:non_degenerate_stochastic}
and the continuity of
  $t\mapsto \sE\left[ 
 X^{\psi^{\theta}}_{t}
(X^{\psi^{\theta}}_{t})^\top
\right]$ and 
$t\mapsto K^{\theta}_t$, 
$\sE[V^{\psi^{\theta}}]\in \sS^{n+d}_+$ if and only if 
there exists no nonzero  $v\in \sR^{n+d}$ such that
$$
v^\top 
\begin{pmatrix}
I \\
K^{\theta}_t
\end{pmatrix}
 \Phi^{\theta}_t
\left(
x_0x_0^\top+
\int_0^t 
(\Phi^{\theta}_s)^{-1}
((\Phi^{\theta}_s)^{-1})^\top
\,\d s 
\right)
(\Phi^{\theta}_t)^\top
\begin{pmatrix}
I  &
(K^{\theta}_t)^\top
\end{pmatrix}
 v=0,
 \q \fa t\in [0,T],
 $$
 where $I$ is the $n\t n$ identity matrix.
 One can easily deduce  by 
 the invertibility of $(\Phi^{\theta}_t)^{-1}$ for all $t\in [0,T]$  that 
 $\int_0^t 
(\Phi^{\theta}_s)^{-1}
((\Phi^{\theta}_s)^{-1})^\top
\,\d s\in \sS^n_+$ for all $t>0$, 
 which subsequently shows that  
 $\sE[V^{\psi^{\theta}}]\in \sS^{n+d}_+$ if and only if 
there exists no nonzero  $\tilde{v}\in \sR^{n+d}$ such that
$\begin{pmatrix}
I  &
(K^{\theta}_t)^\top
\end{pmatrix}
 \tilde{v}=0
 $
 for all $t\in [0,T]$.
 {\color{black}
 Now let us denote  without loss of generality that
 $\tilde{v}=\begin{psmallmatrix}
 u\\ v
 \end{psmallmatrix}$
 for some $u\in \sR^n$ and $v\in \sR^d$.
 Then the above derivation shows that 
  $\sE[V^{\psi^{\theta}}]\in \sS^{n+d}_+$ is equivalent to the following statement:
  \bb\label{eq:equivalence_uv}
 \textnormal{if  $u\in \sR^n$ and $v\in \sR^d$ satisfy
 $u+(K^{\theta}_t)^\top v=0$
for all $t\in [0,T]$,
then $u=0$ and $v=0$.
}
\ee
By \eqref{K2nonoise}, $(K^{\theta}_t)^\top=-P^{\theta}_t B R^{-1}$ for all $t\in [0,T]$
 and $P^\theta_T=0$, 
 implying that $K^\theta_T=0$.
 Then \eqref{eq:equivalence_uv} can be rewritten as:
   \begin{equation*}
   %\label{eq:equivalence_v}
 \textnormal{if  $v\in \sR^d$ satisfies
 $(K^{\theta}_t )^\top v=0$
for all $t\in [0,T]$,
then  $v=0$.}
\end{equation*}
}%

For 
 $ \ref{item:theta_B}
 \Longleftrightarrow
 \ref{item:theta_eps}
 $:
 Item \ref{item:theta_eps}
clearly implies Item \ref{item:theta_B}.
On the other hand, 
for any given $\theta,\theta'\in \sR^{(n+d)\t n}$, 
$$
\d(X^{\psi^{\theta}}- X^{\psi^{\theta'}})_t
=
\Big((A^\star+B^\star K^{\theta}_t)(X^{\psi^{\theta}}- X^{\psi^{\theta'}})_t
+B^\star (K^{\theta}_t-K^{\theta'}_t)  X^{\psi^{\theta'}}_t
\Big)\,\d t.
$$
Then, we can easily deduce
from the continuity of $t\mapsto K^\theta$
(see Lemma \ref{lemma:a_priori_riccati})
  that 
$\sR^{(n+d)\t n}\ni \theta\mapsto Z^{\psi^\theta}\in \cH^2(\sR^{(n+d)\t n})$
is continuous,
which 
implies the continuity of  
$
\sR^{(n+d)\t n}\ni \theta
\mapsto
V^{\psi^\theta}
= \sE\big[\int_0^T 
Z^{\psi^\theta}_{t}(Z^{\psi^\theta}_{t})^\top\,\d t\big]
\in \sS^{n+d}_0$.
Hence, by the continuity of the minimum eigenvalue function,
we 
can conclude
 Item \ref{item:theta_B}
 from
 Item \ref{item:theta_eps}.
 \end{proof}

{\color{black}
The following proposition provides sufficient conditions for 
the nondegeneracy of $K^\theta$.
\begin{Proposition}
\label{prop:non_degenerate_sufficient condition}
Let
$n,d\in \sN$, 
$\theta=(A,B)^\top \in \sR^{(n+d)\t n}$,
$Q\in \sS_0^n$ and $R\in \sS_+^d$.
\begin{enumerate}[(1)]
\item\label{item:sufficient_arbitraryT}
For all $T>0$, if 
 $B^\top Q B\in \sS^d_+$,
 then $\{v\in \sR^d\mid (K^\theta_t)^\top v=0,
\;
\fa t\in [0,T]\}=\{0\}$.

\item
\label{item:sufficient_largeT}
Assume that  the algebraic Riccati equation 
$
A^\top P+PA  -P (BR^{-1}B^\top )P+Q=0
$
admits a 
unique  maximal  solution
$P_\infty\in \sS^n_+$.
Let $K_\infty=-R^{-1}B^\top P_\infty$, and
for each $T>0$, let $P^{(T)}\in C([0,T];\sS_0^n)$ be defined in \eqref{riccati2}.
Assume that $\lim_{T\to \infty} P^{(T)}_0=P_\infty$ and $K_\infty(K_\infty)^\top\in \sS_+^d $.
Then there exists $T_0>0$, such that for all $T\ge T_0$,
$\{v\in \sR^d\mid (K^\theta_t)^\top v=0,
\;
\fa t\in [0,T]\}=\{0\}$.
\end{enumerate}

\end{Proposition}

\begin{proof}
To prove Item \ref{item:sufficient_arbitraryT},
suppose that $B^\top Q B\in \sS_+^n$ and
 $v\in \sR^d$ such that $(K^{\theta}_t)^\top v=
 -P^{\theta}_t BR^{-1} v=0$ for all $t\in [0,T]$,
 with $P^\theta$ defined in \eqref{riccati2}.
 Setting $u=R^{-1}v$,
 right multiplying \eqref{riccati2} by $B u $,
 and 
 left multiplying \eqref{riccati2} by $u^\top B^\top $ shows 
 \begin{equation*}
% \label{eq:P_theta_nondegerate}
u^\top B^\top (\tfrac{\d}{\d t} P^\theta_t) Bu+ A^\top P^\theta_tBu + u^\top B^\top P^\theta_tA Bu - u^\top B^\top P^\theta_t B R^{-1}B^\top P^\theta_tBu  + u^\top B^\top QBu=0.
\end{equation*}
As $P^\theta_t Bu=0$ for all $t\in (0,T)$,
$u^\top B^\top (\frac{\d }{\d t} P^{\theta}_t) B u=
u^\top   B^\top P^{\theta}_t=
0$
for all $t\in (0,T)$,
and hence 
$u^\top   B^\top QBu=0$.
The assumption of $B^\top QB\in \sS^n_+$ then gives
$u=R^{-1}v=0$, which along with the   invertibility of $R^{-1}$
shows that $v=0$.

To prove Item 
\ref{item:sufficient_largeT}, observe that $\lim_{T\to \infty} (-R^{-1} B^\top P_0^{(T)})=K_\infty$.
As $\lambda_{\min} (K_\infty(K_\infty)^\top)>0$, there exists $T_0>0$
such that for all $T\ge T_0$,
$
\lambda_{\min} \left(
(-R^{-1} B^\top P_0^{(T)})
(-R^{-1} B^\top P_0^{(T)})^\top
\right)>0
$. Fix $T\ge T_0$ and consider 
$v\in \sR^d$ such that $(K^{\theta}_t)^\top v
=0$ for all $t\in [0,T]$.
Then  the definitions of $K^\theta$  and $P^{(T)}$
imply the invertibility  of 
 $K^\theta_{0} (K^\theta_{0})^\top$,
 which yields
 $v=(K^\theta_{0} (K^\theta_{0})^\top)^{-1}K^\theta_{0} (K^\theta_{0})^\top v=0$.
\end{proof}

}

Now we are ready for the proof of Theorem \ref{thm:regret_continuous}.

\begin{proof}[Proof of Theorem \ref{thm:regret_continuous}]
As  
% are 
% full column rank,
 (H.\ref{assum:ls}\ref{assum:non_degenerate}) holds with
 $\theta^\star$ and $\theta_0$,
we can obtain from 
Proposition \ref{prop:non_degenerate} that,
 there exist $C_1,\eps>0$
such that 
for all $\theta\in 
\Phi_\eps\coloneqq \{\theta\mid \sR^{(n+d)\t n}\mid |\theta-\theta^\star|\le \eps\}\cup \{\theta_0\}$,
we have 
$\|(\sE[V^{\psi^{{\theta}}}])^{-1}\|_2\le C_1$.
Then by Proposition \ref{theta_conc},
 there exist constants  $\bar{C}_1,\bar{C}_2\ge  1$,
such that 
for all
$\theta\in \Theta_\eps$ and
 $\delta'\in (0,1/2)$, if 
$m\geq \bar{C}_1(-\ln\delta' )$, 
 then  with probability at least $1-2\delta'$, 
\bb\label{eq:Theta_eps_error}
|\hat{\theta}-\theta^\star|\leq \bar{C}_2\left(\sqrt{\frac{-\ln\delta'}{m}}+\frac{-\ln\delta' }{m}+\frac{(-\ln\delta')^{2}}{m^2}\right),
\ee
where 
$\hat{\theta}$ denotes the  right-hand side of \eqref{reg_ls}
 with the control $\psi^\theta$.
% (computed from $\theta$). 
In the following,
we   fix   
$\delta\in (0,3/\pi^2)$ and 
 $C\ge C_0$,
with 
the constant
${C}_0\in (0,\infty)$ satisfying 
$$
{C}_0\ge \bar{C}_1
\bigg(
\sup_{\ell\in \sN\cup\{0\},\delta\in (0,{3}/{\pi^2})}\frac{-\ln (\delta/(\ell+1)^{2})}{2^\ell (-\ln \delta)}
\bigg)
\bigg/ \min\bigg\{\left(\frac{\eps}{3\bar{C}_2}\right)^2,1\bigg\},
$$
let $m_0= C(-\ln \delta)$,
and 
for each  $\ell\in \sN\cup\{0\}$,
 let   $\delta_\ell=\delta/ (\ell+1)^{2}$,
 $m_\ell=2^\ell m_0$, 
and let
 $\theta_{\ell+1}$ be generated by
   \eqref{reg_ls} with   $m=m_\ell$ and $\theta=\theta_\ell$.
Note that the choices of $C_0,m_\ell,\delta_\ell$
 ensure that 
 $m_\ell\ge \bar{C}_1(-\ln \delta_{\ell})$,
 and 
\bb\label{eq:m_0_hatC_0}
\bar{C}_2\left(\sqrt{\frac{-\ln\delta_\ell}{m_\ell}}+\frac{(-\ln\delta_\ell)}{m_\ell}+\frac{(-\ln\delta_\ell)^{2}}{m^2_\ell}\right)
\le
3\bar{C}_2\sqrt{\frac{-\ln\delta_\ell}{m_\ell}}
\le \eps,
\q \fa \ell \in \sN\cup\{0\}.
\ee

We now prove    
with   probability  at least $1-2\sum_{\ell=0}^{\infty}\delta_\ell=1-\frac{\pi^2}{3}\delta$,
%that,
% $\theta_\ell\in \Theta_\eps$  and
\bb\label{eq:theta_ell}
|{\theta}_{\ell+1}-\theta^\star|\leq
%\begin{cases}
%|\theta_0-\theta^\star|, & \ell=0,\\
 \bar{C}_2\bigg(\sqrt{\frac{-\ln\delta_\ell}{m_\ell}}+\frac{(-\ln\delta_\ell)}{m_\ell}+\frac{(-\ln\delta_\ell)^{2}}{m_\ell^2}\bigg),
 \q \fa  \ell\in \sN\cup\{0\}.
% \end{cases}
 \ee
Let us consider the induction statement
for each $k\in \sN\cup\{0\}$: 
  with probability  at least $1-2\sum_{\ell=0}^{k}\delta_\ell$,
   \eqref{eq:theta_ell} holds for all   $\ell=0,\ldots, k$.
The fact that  $\theta_0\in \Theta_\eps$
and \eqref{eq:Theta_eps_error}
yields 
  the induction statement   for $k=0$.
Now suppose that the induction statement holds for some $k\in  \sN\cup\{0\}$.
%Given that
Then 
  the induction hypothesis
and  \eqref{eq:m_0_hatC_0}
ensure that   $|\theta_{\ell}-\theta^\star|\le \eps$
for all $\ell=1,\ldots,k+1$
 (and hence $\theta_{k+1} \in \Theta_\eps$)
  with probability  at least $1-2\sum_{\ell=0}^{k}\delta_\ell$.
Conditioning on this event,
we can apply
\eqref{eq:Theta_eps_error} with 
$\theta=\theta_{k+1}$,
$\delta'=\delta_{k+1}<1/2$ and 
$m=m_{k+1}\ge \bar{C}_1(-\ln\delta_{k+1} )$,
and deduce 
  with probability   
 at least $1-2\delta_{k+1}$
 that
  \eqref{eq:theta_ell} holds for the index $\ell=k+1$.
  Combining this with the induction hypothesis
  yields 
    \eqref{eq:theta_ell} holds for the indices $\ell=0,\ldots, k+1$,
with probability  at least $1-2\sum_{\ell=0}^{k+1}\delta_\ell$.

Observe that 
for all $i\in \sN$, 
 Algorithm \ref{alg:conts_ls}
generates
the $i$-th trajectory  with control $\psi^{\theta_\ell}$
if $i\in (\sum_{j=0}^{\ell-1} m_j,\sum_{j=0}^{\ell}m_j]=( m_0(2^\ell-1), m_0(2^{\ell+1}-1)]$ with some $\ell\in \sN\cup \{0\}$.
Then
conditioning on the event \eqref{eq:theta_ell},
we can obtain from 
 Proposition \ref{prop:performance_gap} that, 
 for all $M\in \sN$,
\begin{align}\label{eq:R_N_conts}
\begin{split}
R(M)&
\le \sum_{\ell=0}^{\lceil \log_2(\frac{M}{m_0}+1)\rceil-1 } m_\ell 
\Big(
J^{\theta^\star}(U^{\psi^{\theta_\ell}})-J^{\theta^\star}(U^{\theta^\star})
\Big)
\leq
C' \sum_{\ell=0}^{\lceil \log_2(\frac{M}{m_0}+1)\rceil-1 } m_\ell |\theta_\ell-\theta^\star|^2
\\
&\le C'm_0+C'\sum_{\ell=0}^{\lceil \log_2(\frac{M}{m_0}+1)\rceil-1 }  (-\ln\delta_\ell)  \Big(1+\frac{-\ln\delta_\ell}{m_\ell}+\frac{(-\ln\delta_\ell)^{3}}{m_\ell^3}\Big)
\\
&\le
C'(-\ln \delta)+C'\sum_{\ell=1}^{\lceil \log_2{M}\rceil } \bigg(2\ln \ell -\ln \delta\bigg)
\le 
C'
\left(
(\ln M)(\ln\ln M)+(\ln M)(-\ln \delta)
\right),
\end{split}
\end{align}
with a constant $C'$ independent of $M$ and $\delta$,
where we have used  
$\sum_{\ell=1}^n \ln \ell=\ln (n!)\le  C'n\ln n$ due to Stirling's formula.
\end{proof}

\subsection{Regret analysis of discrete-time least-squares algorithm}
\label{sec:regret_discrete}

This section is devoted to the proof of 
 Theorem \ref{thm:regret_discrete}.
 The main step is  similar to the proof of Theorem   \ref{thm:regret_continuous}
in Section \ref{sec:regret_continuous}.
However, one needs to quantity the precise impact of 
the piecewise constant policies 
and the discrete-time observations 
on the performance gap and the parameter estimation error.

\paragraph{Step 1: Analysis of the performance gap.}

The following proposition shows the performance gap between
applying a piecewise constant  feedback 
control   for an incorrect model
 and a continuous-time 
feedback control for the true model 
scales quadratically with respect to the stepsize
and the parameter errors.

\begin{Proposition}\label{prop:performance_gap_discrete}
Suppose (H.\ref{assum:ls}\ref{assum:pd}) holds
and 
let  $\Theta$ be a bounded subset of 
$\sR^{(n+d)\t n}$.
For each $\theta\in\Theta$ and $N\in \sN$, 
let 
$\psi^{\theta,\tau}$
be defined in \eqref{eq:feedback_theta_tau}
with stepsize $\tau=T/N$, 
let
$X^{\psi^{\theta,\tau}}$
be  the state process
 associated with    $\psi^{ \theta,\tau}$
 (cf.~\eqref{eq:sde_theta_pi}),
let $\psi^{\theta^\star}$ be defined in
\eqref{K1},
and let
$X^{{\theta^\star}}$
be  the state process
 associated with   
$\psi^{ \theta^\star}$ (cf.~\eqref{eq:state_psi_theta}).
Then there exists   $C>0$ such that 
\bb\label{item:performance_gap_discrete}
|J^{\theta^\star}(U^{\psi^{\theta,\tau}})-J^{\theta^\star}(U^{\theta^\star})|
\le C(N^{-2}+|\theta-\theta^\star|^2),
\q \fa \theta\in \Theta, N\in \sN,
\ee
%\begin{enumerate}[(1)]
%\item \label{item:state_control_time_conv}
%$\|X^{\psi^{\theta,\tau}}-X^{{\theta^\star}}\|_{\cS^2(\sR^n)}
%+\|U^{\psi^{\theta,\tau}}-U^{\theta^\star}
%\|_{\cS^2(\sR^d)}
%\le C(N^{-1}+|\theta-\theta^\star|)$,
%\item \l
%$|J^{\theta^\star}(U^{\psi^{\theta,\tau}})-J^{\theta^\star}(U^{\theta^\star})|
%\le C(N^{-2}+|\theta-\theta^\star|^2)$,
%\end{enumerate}
where $U^{\psi^{\theta,\tau}}_t=\psi^{\theta,\tau}(t,X^{\psi^{\theta,\tau}}_t)$ and 
$U^{\theta^\star}_t=\psi^{\theta^\star}(t, X^{{\theta^\star}}_t)$ for all $t\in [0,T]$,
and  $J^{\theta^\star}$ is defined in \eqref{LQ}.

\end{Proposition}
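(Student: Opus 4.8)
The plan is to reduce the statement to the quadratic bound \eqref{eq:quadratic_gap} derived in the proof of Proposition \ref{prop:performance_gap}. That bound came from the first-order optimality condition for $U^{\theta^\star}$ and holds for \emph{every} $U\in\cH^2(\sR^d)$ with associated state $X^U$, hence in particular for piecewise constant controls since $\cH^2_N(\sR^d)\subset\cH^2(\sR^d)$; moreover the gap is nonnegative by optimality of $U^{\theta^\star}$, so the modulus in \eqref{item:performance_gap_discrete} is harmless. Taking $U=U^{\psi^{\theta,\tau}}$, $X^U=X^{\psi^{\theta,\tau}}$ and using $(a+b)^2\le 2a^2+2b^2$, it then suffices to prove the linear bound
\[
\|X^{\psi^{\theta,\tau}}-X^{{\theta^\star}}\|_{\cS^2(\sR^n)}+\|U^{\psi^{\theta,\tau}}-U^{\theta^\star}\|_{\cS^2(\sR^d)}\le C\big(N^{-1}+|\theta-\theta^\star|\big)
\]
uniformly over $\theta\in\Theta$ and $N\in\sN$.

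First I would control the feedback gains. Let $K^{\theta,\tau}$ be the piecewise constant gain of \eqref{eq:K_pi}. Proposition \ref{prop:riccati_conv_rate} gives $\sup_{t\in[0,T]}\|K^\theta_t-K^{\theta,\tau}_t\|_2\le C\tau=CN^{-1}$ uniformly in $\theta\in\Theta$ and $N\in\sN$, while the proof of Proposition \ref{prop:performance_gap} (via Lemma \ref{lemma:a_priori_riccati}) provides the Lipschitz estimate $\|K^\theta-K^{\theta^\star}\|_{C([0,T];\sR^{d\t n})}\le C|\theta-\theta^\star|$; the triangle inequality then yields $\sup_{t\in[0,T]}\|K^{\theta,\tau}_t-K^{\theta^\star}_t\|_2\le C(N^{-1}+|\theta-\theta^\star|)$. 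In particular $K^{\theta,\tau}$ is uniformly bounded, so the linear SDE \eqref{eq:sde_theta_pi} satisfies the a priori estimate $\|X^{\psi^{\theta,\tau}}\|_{\cS^2(\sR^n)}\le C$, uniformly in $\theta\in\Theta$ and $N\in\sN$.

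Next I would compare the state and control processes exactly as in Proposition \ref{prop:performance_gap}. Since $X^{\psi^{\theta,\tau}}$ and $X^{\theta^\star}$ start at $x_0$ and are driven by the same Brownian motion, their difference solves the random linear ODE
\[
\d(X^{\theta^\star}-X^{\psi^{\theta,\tau}})_t=\Big[(A^\star+B^\star K^{\theta^\star}_t)(X^{\theta^\star}-X^{\psi^{\theta,\tau}})_t+B^\star(K^{\theta^\star}_t-K^{\theta,\tau}_t)X^{\psi^{\theta,\tau}}_t\Big]\,\d t,
\]
with the noise cancelling; Gronwall's inequality and the boundedness of $K^{\theta^\star}$ give $\|X^{\theta^\star}-X^{\psi^{\theta,\tau}}\|_{\cS^2(\sR^n)}\le C\|(K^{\theta^\star}-K^{\theta,\tau})X^{\psi^{\theta,\tau}}\|_{\cH^2(\sR^n)}$, and bounding the right-hand side by $\sup_{t\in[0,T]}\|K^{\theta^\star}_t-K^{\theta,\tau}_t\|_2\,\|X^{\psi^{\theta,\tau}}\|_{\cH^2(\sR^n)}$ and invoking the gain bound and the a priori bound from the previous step gives $\|X^{\theta^\star}-X^{\psi^{\theta,\tau}}\|_{\cS^2(\sR^n)}\le C(N^{-1}+|\theta-\theta^\star|)$. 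Writing $U^{\psi^{\theta,\tau}}_t-U^{\theta^\star}_t=(K^{\theta,\tau}_t-K^{\theta^\star}_t)X^{\psi^{\theta,\tau}}_t+K^{\theta^\star}_t(X^{\psi^{\theta,\tau}}_t-X^{\theta^\star}_t)$ and using the same two bounds gives the control difference bound of the same order; substituting into \eqref{eq:quadratic_gap} yields \eqref{item:performance_gap_discrete}.

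The genuinely new ingredient is already encapsulated in Proposition \ref{prop:riccati_conv_rate}; everything else reuses the linear-SDE and Gronwall arguments of Proposition \ref{prop:performance_gap}. I expect the only delicate point to be uniformity of the constants in $\theta\in\Theta$ and, above all, in $N$ — in particular the a priori bound $\|X^{\psi^{\theta,\tau}}\|_{\cS^2(\sR^n)}\le C$, which should be obtained from the $N$-uniform gain bound of Proposition \ref{prop:riccati_conv_rate} together with standard linear-SDE estimates, rather than from any ad hoc discrete Gronwall argument.
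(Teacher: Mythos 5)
Your proposal is correct and follows essentially the same route as the paper: it reduces to the quadratic gap \eqref{eq:quadratic_gap}, bounds $\sup_{t}\|K^{\theta^\star}_t-K^{\theta,\tau}_t\|_2\le C(N^{-1}+|\theta-\theta^\star|)$ via Proposition \ref{prop:riccati_conv_rate} and Lemma \ref{lemma:a_priori_riccati}, and closes with the same Gronwall estimate on the noise-free linear ODE for $X^{\theta^\star}-X^{\psi^{\theta,\tau}}$. Your explicit decomposition of the control difference even cleans up a small imprecision in the paper, which states the bound on $\|U^{\psi^{\theta,\tau}}-U^{\theta^\star}\|_{\cS^2(\sR^d)}$ as $CN^{-1}$ where $C(N^{-1}+|\theta-\theta^\star|)$ is what is actually obtained and needed.
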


{\color{black}
\begin{proof}
Let us fix 
 $\theta\in \Theta$ and $N\in \sN$.
By
applying  Proposition \ref{prop:taylor_expansion}
with $U=U^{\psi^{\theta,\tau}}$, 
\begin{align}\label{eq:quadratic_gap_feedback_discrete}
\begin{split}
&J^{\theta^\star}(U^{\psi^{\theta,\tau}})-J^{\theta^\star}(U^{\theta^\star})
\\
&
\leq  \|Q\|_2\|X^{\theta^\star, U^{\psi^{\theta,\tau}}}-X^{{\theta^\star}}\|^2_{\cH^2(\sR^n)}+\|R\|_2
\|U^{\psi^{\theta,\tau}}-U^{\theta^\star}\|^2_{\cH^2(\sR^d)},
\\
&\leq  \|Q\|_2\|X^{{\psi^{\theta,\tau}}}-X^{\psi^{\theta^\star}}\|^2_{\cH^2(\sR^n)}+\|R\|_2
\|\psi^{\theta,\tau}(\cdot, X^{\psi^{\theta,\tau}}_\cdot)-\psi^{\theta^\star}(\cdot, X^{\psi^{\theta^\star}}_\cdot)\|^2_{\cH^2(\sR^d)},
\end{split}
\end{align}
where 
the last inequality used the fact that $X^{\theta^\star, U^{\psi^{\theta,\tau}}}=X^{{\psi^{\theta,\tau}}}$
(see  \eqref{lin_dyn}),
and the definitions of 
 $U^{\psi^{\theta,\tau}}$ and 
$U^{\theta^\star}$.

We then prove that there exists a  constant  $C$, independent of $\theta, N$, such that 
$$
\|X^{{\psi^{\theta,\tau}}}-X^{\psi^{\theta^\star}}\|_{\cH^2(\sR^n)}+
\|\psi^{\theta,\tau}(\cdot, X^{\psi^{\theta,\tau}}_\cdot)-\psi^{\theta^\star}(\cdot, X^{\psi^{\theta^\star}}_\cdot)\|_{\cH^2(\sR^d)}
\le C(N^{-1}+|\theta-\theta^\star|).
$$ 
By setting  $\delta X=X^{\theta^{\star}}-X^{\psi^{\theta,\tau}}$,
 we obtain from
 \eqref{eq:state_psi_theta} and
  \eqref{eq:sde_theta_pi} that
\begin{align}
\label{eq:delta X_pi}
\d  \delta X_t
=(A^\star \delta X_t+B^\star K^{\theta^\star}_t \delta X_t+(K^{\theta^\star}_t -K^{\theta,\tau}_t)X^{\psi^{\theta,\tau}}_t)\, \d t,
\q t\in [0,T]; \q \delta X_0=0.
\end{align}
Since $\|P^{\theta^\star}\|_{C([0,T];\sR^{n\t n})}\le C$
and $K^{\theta^\star}_t=-R^{-1}B^\top P^{{\theta^\star}}_t$ for all $t\in [0,T]$, 
%we have
$\|K^{\theta^\star}\|_{C([0,T];\sR^{d\t n})}\le C$. 
Moreover, 
by  $\|P^{\theta,\tau}_{t_i}\|_2\le C$
 for all $i=0,\ldots,N$  
(see Proposition \ref{prop:riccati_conv_rate})
and 
 \eqref{eq:K_pi},
 we have 
 $\|K^{\theta,\tau}_t\|_2\le C$
 for all $t\in [0,T]$,
 which along with a  moment estimate of \eqref{eq:sde_theta_pi}
 yields $\|X^{\psi^{\theta,\tau}}\|_{\cS^2(\sR^n)}\le C$.
Thus,   by applying
Gronwall's inequality to \eqref{eq:delta X_pi}, 
Lemma \ref{lemma:a_priori_riccati}
and
 Proposition    \ref{prop:riccati_conv_rate},
 for all 
$ \theta\in \Theta$ and $N\in \sN$,
\begin{align}\label{eq:X_L2_rate}
\begin{split}
&\|X^{\theta^\star}-X^{\psi^{\theta,\tau}}\|_{\cH^2(\sR^n)}
\le 
C\|X^{\theta^\star}-X^{\psi^{\theta,\tau}}\|_{\cS^2(\sR^n)}
\\
&\le C\|(K^{\theta^\star}_t -K^{\theta,\tau}_t)X^{\psi^{\theta,\tau}}\|_{\cH^2(\sR^d)}
%\\
%&
%\le 
%C\|X^{\psi^{\theta,\tau}}\|_{\cH^2(\sR^n)}\max_{t\in [0,T]}\|K^\theta_t -K^{\theta,\tau}_t\|_2
\le C\max_{t\in [0,T]}\|K^{\theta^\star}_t -K^{\theta,\tau}_t\|_2
\\
&\le
C\max_{t\in [0,T]}(\|K^{\theta}_t -K^{\theta,\tau}_t\|_2+\|K^{\theta^\star}_t -K^{\theta}_t\|_2)
\le 
 C(N^{-1}+|\theta-\theta^\star|).
\end{split}
\end{align}
The above inequality further implies  
\begin{align*}
& \|\psi^{\theta,\tau}(\cdot, X^{\psi^{\theta,\tau}}_\cdot)-\psi^{\theta^\star}(\cdot, X^{\psi^{\theta^\star}}_\cdot)\|_{\cH^2(\sR^d)}
=\|K^{\theta,\tau}_\cdot X^{\psi^{\theta,\tau}}_\cdot-K^{\theta^\star}_\cdot  X^{\psi^{\theta^\star}}_\cdot\|_{\cH^2(\sR^d)}
\\
&\le 
\|( K^{\theta,\tau}_\cdot -K^{\theta^\star}_\cdot ) X^{\psi^{\theta,\tau}}_\cdot\|_{\cH^2(\sR^d)}
+
\|
K^{\theta^\star}_\cdot  (X^{\psi^{\theta,\tau}}_\cdot - X^{\psi^{\theta^\star}}_\cdot)\|_{\cH^2(\sR^d)}
\\
&\le   
\|K^{\theta^\star}-K^{\theta,\tau}\|_{C([0,T;\sR^{d\t n})} \| X^{\psi^{\theta,\tau}}\|_{\cH^2(\sR^{n})}
+\|
K^{\theta^\star}\|_{C([0,T;\sR^{d\t n})} \| X^{\psi^{\theta,\tau}} - X^{\psi^{\theta^\star}}\|_{\cH^2(\sR^n)}
\\
&\le C(N^{-1}+|\theta-\theta^\star|),
\q \fa \theta\in \Theta, N\in \sN,
\end{align*}
which along with \eqref{eq:quadratic_gap_feedback_discrete} finishes the desired estimate.
\end{proof}
}

\paragraph{Step 2: Error bound for parameter estimation.}

The following lemma shows that
the difference between 
the expectations of 
$(V^{\psi^{\theta,\tau},\tau},Y^{\psi^{\theta,\tau},\tau})$
and 
of $(V^{\psi^{\theta,\tau}},Y^{\psi^{\theta,\tau}})$
scales linearly with respect to 
the stepsize.

\begin{Lemma}\label{lemma:discrete_expectation}
Suppose (H.\ref{assum:ls}\ref{assum:pd}) holds
and 
let  $\Theta$ be a bounded subset of 
$\sR^{(n+d)\t n}$.
For each $\theta \in\Theta$ and $N\in \sN$, 
let $\tau=T/N$,
let
$\psi^{\theta,\tau}$
be defined in \eqref{eq:feedback_theta_tau},
let 
$V^{\psi^{\theta,\tau}},Y^{\psi^{\theta,\tau}}$
be defined in
\eqref{eq:rv_ls_psi_conts},
and
 let 
$V^{\psi^{\theta,\tau},\tau},Y^{\psi^{\theta,\tau},\tau}$
be defined in
\eqref{eq:rv_ls_psi_discrete}.
Then there exists a constant $C$ such that 
$$
|\sE[V^{\psi^{\theta,\tau},\tau}-V^{\psi^{\theta,\tau}}]|
+
|\sE[Y^{\psi^{\theta,\tau},\tau}-Y^{\psi^{\theta,\tau}}]|
\le CN^{-1},
\q \fa 
\theta\in \Theta, N\in \sN.
$$

\end{Lemma}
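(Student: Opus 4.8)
The plan is to express both differences as sums of per-subinterval oscillation terms and to bound each by $C\tau=CN^{-1}$ using second-moment ODEs for the affine SDE \eqref{eq:sde_theta_pi}. Throughout, fix $\theta\in\Theta$ and $N\in\sN$, write $t_i=i\tau$ with $\tau=T/N$, and set $G_i\coloneqq A^\star+B^\star K^{\theta,\tau}_{t_i}$. By Proposition \ref{prop:riccati_conv_rate} together with Lemma \ref{lemma:a_priori_riccati}, $\|K^{\theta,\tau}_{t_i}\|_2\le C$ and hence $\|G_i\|_2\le C$, uniformly in $\theta\in\Theta$, $N\in\sN$ and $i$; a standard moment estimate for \eqref{eq:sde_theta_pi} then gives $\|X^{\psi^{\theta,\tau}}\|_{\cS^2(\sR^n)}\le C$, again uniformly. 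Since on $[t_i,t_{i+1})$ one has $Z^{\psi^{\theta,\tau}}_t=\begin{psmallmatrix} I\\ K^{\theta,\tau}_{t_i}\end{psmallmatrix}X^{\psi^{\theta,\tau}}_t$, all the random matrices in \eqref{eq:rv_ls_psi_conts}--\eqref{eq:rv_ls_psi_discrete} are governed by the second moments $\Sigma(t,s)\coloneqq\sE[X^{\psi^{\theta,\tau}}_t(X^{\psi^{\theta,\tau}}_s)^\top]$, so it suffices to control the oscillations of $\Sigma$.

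First I would derive the two moment ODEs on each subinterval $[t_i,t_{i+1})$. Applying It\^o's formula to $X^{\psi^{\theta,\tau}}_t(X^{\psi^{\theta,\tau}}_t)^\top$ and taking expectations kills the It\^o integral and gives $\frac{\d}{\d t}\Sigma(t,t)=G_i\Sigma(t,t)+\Sigma(t,t)G_i^\top+I$; applying It\^o's formula to $X^{\psi^{\theta,\tau}}_t(X^{\psi^{\theta,\tau}}_{t_i})^\top$, where the second factor is $\cF_{t_i}$-measurable and hence frozen, and using that the resulting $\d W$-integral has zero mean, gives $\frac{\d}{\d t}\Sigma(t,t_i)=G_i\Sigma(t,t_i)$. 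Since $\|\Sigma(\cdot,\cdot)\|_2\le\|X^{\psi^{\theta,\tau}}\|_{\cS^2(\sR^n)}^2\le C$ and $\|G_i\|_2\le C$, both right-hand sides are bounded by $C$, so (using continuity of $X^{\psi^{\theta,\tau}}$ at the grid points) $\|\Sigma(t,t)-\Sigma(t_i,t_i)\|_2\le C\tau$ and $\|\Sigma(t,t_i)-\Sigma(t_i,t_i)\|_2\le C\tau$ for $t\in[t_i,t_{i+1}]$, whence also $\|\Sigma(t,t)-\Sigma(t_i,t)\|_2\le C\tau$.

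Then I would assemble the two estimates. By Fubini, $\sE[V^{\psi^{\theta,\tau}}-V^{\psi^{\theta,\tau},\tau}]=\sum_{i=0}^{N-1}\int_{t_i}^{t_{i+1}}\begin{psmallmatrix} I\\ K^{\theta,\tau}_{t_i}\end{psmallmatrix}\big(\Sigma(t,t)-\Sigma(t_i,t_i)\big)\begin{psmallmatrix} I & (K^{\theta,\tau}_{t_i})^\top\end{psmallmatrix}\,\d t$, which by the above has norm at most $\sum_i\tau\cdot C\tau=CT\tau\le CN^{-1}$. For $Y$, I would use \eqref{eq:sde_theta_pi} and the zero mean of the It\^o integral to write $\sE[Y^{\psi^{\theta,\tau}}]=\sum_i\int_{t_i}^{t_{i+1}}\begin{psmallmatrix} I\\ K^{\theta,\tau}_{t_i}\end{psmallmatrix}\Sigma(t,t)\,G_i^\top\,\d t$, and, expanding $X^{\psi^{\theta,\tau}}_{t_{i+1}}-X^{\psi^{\theta,\tau}}_{t_i}=\int_{t_i}^{t_{i+1}}G_iX^{\psi^{\theta,\tau}}_s\,\d s+(W_{t_{i+1}}-W_{t_i})$ and using independence of the Brownian increment from $\cF_{t_i}$, $\sE[Y^{\psi^{\theta,\tau},\tau}]=\sum_i\int_{t_i}^{t_{i+1}}\begin{psmallmatrix} I\\ K^{\theta,\tau}_{t_i}\end{psmallmatrix}\Sigma(t_i,t)\,G_i^\top\,\d t$; subtracting gives norm at most $\sum_i\tau\cdot C\tau\cdot C\le CN^{-1}$. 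Adding the two bounds yields the claim. The interchange of expectation with the integrals and sums is routine from $\sE[\int_0^T|Z^{\psi^{\theta,\tau}}_t|^2\,\d t]<\infty$ and the tower property.

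The main obstacle I anticipate is obtaining the first-order rate $O(\tau)$ rather than the $O(\sqrt\tau)$ that a pathwise or $L^2$ Cauchy--Schwarz bound on $X^{\psi^{\theta,\tau}}_t-X^{\psi^{\theta,\tau}}_{t_i}$ would produce, since the Brownian increment makes $\|X^{\psi^{\theta,\tau}}_t-X^{\psi^{\theta,\tau}}_{t_i}\|_{L^2}$ of order $\sqrt\tau$. The sharp rate must come from taking expectations first: the martingale cross-terms contribute at order $\tau$ (the quadratic-variation contribution $\sE[(W_t-W_{t_i})(W_t-W_{t_i})^\top]=(t-t_i)I$ is itself $O(\tau)$), which is exactly what the two second-moment ODEs above encode.
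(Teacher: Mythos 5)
Your proposal is correct and follows essentially the same route as the paper: both reduce the claim, via Fubini and the vanishing mean of the It\^o integrals, to showing that $\sE[Z^{\psi^{\theta,\tau}}_tZ^{\psi^{\theta,\tau},\top}_t - Z^{\psi^{\theta,\tau}}_{t_i}Z^{\psi^{\theta,\tau},\top}_{t_i}]$ and $\sE[(Z^{\psi^{\theta,\tau}}_t-Z^{\psi^{\theta,\tau}}_{t_i})Z^{\psi^{\theta,\tau},\top}_t]$ are $O(\tau)$ on each subinterval, with the first-order rate coming precisely from taking expectations before measuring the increment. The only cosmetic difference is that the paper bounds the second-moment oscillation by expanding the explicit representation $X_t=e^{L(t-t_i)}X_{t_i}+\int_{t_i}^te^{L(t-s)}\,\d W_s$ and invoking the It\^o isometry, whereas you integrate the equivalent Lyapunov ODEs for $\sE[X_tX_t^\top]$ and $\sE[X_tX_{t_i}^\top]$.
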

\begin{proof}
%Throughout this proof, let $\theta\in \Theta$ be fixed and
%for any $N\in\sN$, 
%let $V^{\psi^{\theta,\tau}},Y^{\psi^{\theta,\tau}}$
%be defined by \eqref{eq:rv_ls_psi}
%with the control $\psi^{\theta,\tau}$. 
% Proposition \ref{prop:state_control_time_conv}
% shows that 
%$\|X^{\psi^\theta}\|_{\cH^2(\sR^n)}$,
%$\|X^{\psi^{\theta,\tau}}\|_{\cH^2(\sR^n)}$,
%$\|U^{\psi^\theta}\|_{\cH^2(\sR^n)}$,
%$\|U^{\psi^{\theta,\tau}}\|_{\cH^2(\sR^n)}$
%are uniformly bounded in $\theta$ and $\tau$,
%and 
%$\|X^{\psi^\theta}-X^{\psi^{\theta,\tau}}\|_{\cS^2(\sR^n)}
%+\|U^{\psi^\theta}-U^{\psi^{\theta,\tau}}\|_{\cS^2(\sR^d)}
%\le CN^{-1}$,
%where 
%$X^{\psi^\theta}$
%and 
%$X^{\psi^{\theta,\tau}}$
%are the state processes
% associated with  $\psi^{ \theta}$ and  $\psi^{ \theta,\tau}$, respectively,
% and 
% $U^{\psi^\theta}_t=
%\psi^\theta(t, X^{\psi^\theta}_t)$
%and 
%$U^{\psi^{\theta,\tau}}_t=
%\psi^{\theta,\tau}(t, X^{\psi^{\theta,\tau}}_t)$
%for all $t\in [0,T]$.
%Since
%for any given  control $\psi$,
%the entries of  
%$  V^{\psi}$ and 
%$Y^{\psi}$ only involve
%the integrals of 
%the products of $X^{\psi}$ and $U^\psi$,
%and  the It\^{o} integrals of 
%$X^{\psi}$ and $U^\psi$,
%one can easily obtain from the Cauchy-Schwarz inequality
%and the It\^{o} isometry that 
%$\sE[|V^{\psi^{\theta,\tau}}-V^{\psi^{\theta}}|]
%+
%\sE[|Y^{\psi^{\theta,\tau}}-Y^{\psi^{\theta}}|]
%\le CN^{-1}$,
%for all $N\in \sN$ and $\theta\in \Theta$.
%
%
By
\eqref{eq:sde_theta_pi}, 
we have 
for all $i=0,\ldots, N-1$,
$ X^{\psi^{\theta,\tau}}_{t_{i+1}}-X^{\psi^{\theta,\tau}}_{t_{i}}=\int_{t_i}^{t_{i+1}}
(\theta^\star)^\top Z^{\psi^{\theta,\tau}}_t\,\d t+W_{t_{i+1}}-W_{t_{i}}$,
which implies 
\begin{align*}
\sE[V^{\psi^{\theta,\tau}}-V^{\psi^{\theta,\tau},\tau}]
&=
\sum_{i=0}^{N-1}
\int_{t_i}^{t_{i+1}} 
\sE[
Z^{\psi^{\theta,\tau}}_{t}(Z^{\psi^{\theta,\tau}}_{t})^\top
-
Z^{\psi^{\theta,\tau}}_{t_i}(Z^{\psi^{\theta,\tau}}_{t_i})^\top
]
\,\d t,
\\
\sE[Y^{\psi^{\theta,\tau}}-Y^{\psi^{\theta,\tau},\tau}]
&=
\sum_{i=0}^{N-1}
\int_{t_i}^{t_{i+1}} 
\sE[
(Z^{\psi^{\theta,\tau}}_{t}
-Z^{\psi^{\theta,\tau}}_{t_i})
(Z^{\psi^{\theta,\tau}}_{t})^\top
\theta^\star
]
\,\d t.
\end{align*}
Hence it suffices to
 prove that 
$|\sE[
Z^{\psi^{\theta,\tau}}_{t}(Z^{\psi^{\theta,\tau}}_{t})^\top
-
Z^{\psi^{\theta,\tau}}_{t_i}(Z^{\psi^{\theta,\tau}}_{t_i})^\top
]|\le CN^{-1}$
and
$|\sE[
(Z^{\psi^{\theta,\tau}}_{t}
-Z^{\psi^{\theta,\tau}}_{t_i})
(Z^{\psi^{\theta,\tau}}_{t})^\top]
|\le CN^{-1}$
for all $t\in [t_i,t_{i+1}]$ and $i=0,\ldots, N-1$.

Let us fix $i=0,\ldots, N-1$ and $t\in [t_i,t_{i+1}]$.
In the following, we shall omit the superscripts  of 
  $X^{\psi^{\theta,\tau}}$ and $Z^{\psi^{\theta,\tau}}$
  if no confusion occurs. 
  As $t\in [t_i,t_{i+1}]$,
by  \eqref{eq:sde_theta_pi},
we have 
$X_t=e^{Lt}X_{t_i}+\int_{t_i}^te^{L(t-s)}\,\d W_s$
with $L\coloneqq A^\star+B^\star K^{\theta,\tau}_{t_i}$.
Thus, %we have 
\begin{align*}
&
X_tX^\top_t-X_{t_i}X^\top_{t_i}
\\
&=(X_t-X_{t_i}+X_{t_i})(X_t-X_{t_i}+X_{t_i})^\top-X_{t_i}X^\top_{t_i}
\\
&=
(X_t-X_{t_i})(X_t-X_{t_i})^\top
+X_{t_i}(X_t-X_{t_i})^\top
+(X_t-X_{t_i})X_{t_i}^\top
\\
&=
\bigg((e^{Lt}-I)X_{t_i}+\int_{t_i}^te^{L(t-s)}\,\d W_s\bigg)
\bigg((e^{Lt}-I)X_{t_i}+\int_{t_i}^te^{L(t-s)}\,\d W_s\bigg)^\top
\\
&\q +X_{t_i}\bigg((e^{Lt}-I)X_{t_i}+\int_{t_i}^te^{L(t-s)}\,\d W_s\bigg)^\top
+\bigg((e^{Lt}-I)X_{t_i}+\int_{t_i}^te^{L(t-s)}\,\d W_s\bigg)X_{t_i}^\top.
\end{align*}
By taking  expectations of both sides of the above identity,  the martingale property of
the
 It\^{o} integral,
and the It\^{o} isometry, 
\begin{align*}
\sE[X_tX^\top_t-X_{t_i}X^\top_{t_i}]
&=
(e^{Lt}-I)\sE[X_{t_i}X_{t_i}^\top](e^{L^\top t}-I)
+\int_{t_i}^te^{L(t-s)}e^{L^\top (t-s)}\,\d s
\\
&\q +\sE[X_{t_i}X_{t_i}^\top] (e^{L^\top t}-I)
+(e^{Lt}-I)\sE[X_{t_i}X_{t_i}^\top]
\le 
C(t-t_i),
\end{align*}
where 
 the last inequality follows from 
$\|X^{\psi^{\theta,\tau}}\|_{\cS^2(\sR^n)}\le C$.
Since $\psi^{\theta,\tau}(t,X^{\psi^{\theta,\tau}}_t)=K^{\theta,\tau}_{t_i}X^{\psi^{\theta,\tau}}_t$
and $\|K^{\theta,\tau}_{t_i}\|_{2}\le C$, one can easily show that 
$|\sE[
Z^{\psi^{\theta,\tau}}_{t}(Z^{\psi^{\theta,\tau}}_{t})^\top
-Z^{\psi^{\theta,\tau}}_{t_i}(Z^{\psi^{\theta,\tau}}_{t_i})^\top]
|
\le CN^{-1}$.
Furthermore,
by
$X^{\psi^{\theta,\tau}}_t=e^{Lt}X^{\psi^{\theta,\tau}}_{t_i}+\int_{t_i}^te^{L(t-s)}\,\d W_s$
and
 the identity  
$$
Z^{\psi^{\theta,\tau}}_{t}(Z^{\psi^{\theta,\tau}}_{t})^\top
-Z^{\psi^{\theta,\tau}}_{t_i}(Z^{\psi^{\theta,\tau}}_{t_i})^\top
=(Z^{\psi^{\theta,\tau}}_{t}
-Z^{\psi^{\theta,\tau}}_{t_i})
(Z^{\psi^{\theta,\tau}}_{t})^\top
+Z^{\psi^{\theta,\tau}}_{t_i}(Z^{\psi^{\theta,\tau}}_{t}-Z^{\psi^{\theta,\tau}}_{t_i})^\top,
$$
we can show  that
\begin{align*}
&|\sE[(Z^{\psi^{\theta,\tau}}_{t}
-Z^{\psi^{\theta,\tau}}_{t_i})
(Z^{\psi^{\theta,\tau}}_{t})^\top]|
\\
&\le 
|\sE[Z^{\psi^{\theta,\tau}}_{t}(Z^{\psi^{\theta,\tau}}_{t})^\top
-Z^{\psi^{\theta,\tau}}_{t_i}(Z^{\psi^{\theta,\tau}}_{t_i})^\top]|
+
|
\sE[Z^{\psi^{\theta,\tau}}_{t_i}(Z^{\psi^{\theta,\tau}}_{t}-Z^{\psi^{\theta,\tau}}_{t_i})^\top]
|
\\
&\le
C\left(N^{-1}+
\left|
\sE\left[Z^{\psi^{\theta,\tau}}_{t_i} (X^{\psi^{\theta,\tau}}_{t_i})^\top(e^{L^\top t}-I)
\begin{pmatrix}
I & (K^{\theta,\tau}_{t_i})^\top
\end{pmatrix}
\right]
\right|
\right)
\le CN^{-1},
\end{align*}
by  the uniform boundedness of $\|X^{\psi^{\theta,\tau}}\|_{\cS^2(\sR^n)}$ and $K^{\theta,\tau}$.
\end{proof}

\begin{Proposition}\label{theta_conc_discrete}

Suppose  (H.\ref{assum:ls}\ref{assum:pd}) holds,
and  let  $\Theta\subset \sR^{(n+d)\t n}$
such that 
 there exists
  $C_1>0$ 
satisfying
$\|(\sE[V^{\psi^{{\theta}}}])^{-1}\|_2\le C_1$
and $|\theta|\le C_1$
for all $\theta\in \Theta$,
with $V^{\psi^\theta}$  defined in  \eqref{eq:rv_ls_psi_conts}.
Then there exist constants  $\bar{C}_1,\bar{C}_2\ge 0$ and $n_0\in\sN$,
%independent of  $m$ and $\theta$, 
such that 
for all
$\theta\in \Theta$,
$N\in \sN\cap [n_0,\infty)$
 and
 $\delta\in (0,1/2)$, if 
$m\geq \bar{C}_1(-\ln\delta )$, 
 then   with probability at least $1-2\delta$, 
\bb\label{eq:parameter_estimation}
|\hat{\theta}-\theta^\star|\leq \bar{C}_2\bigg(\sqrt{\frac{-\ln\delta}{m}}+\frac{-\ln\delta }{m}+\frac{(-\ln\delta)^{2}}{m^2}
+\frac{1}{N}
\bigg),
\ee
where 
$\hat{\theta}$ denotes the  right-hand side of \eqref{reg_ls_discrete}
 with the control $\psi^{\theta,\tau}$
 and stepsize $\tau=T/N$.
% (computed from $\theta$). 
\end{Proposition}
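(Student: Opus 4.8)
The plan is to mirror the structure of the continuous-time estimate in Proposition \ref{theta_conc}, but with an extra $\cO(N^{-1})$ term coming from model misspecification due to discrete sampling. Write $\hat\theta$ (the right-hand side of \eqref{reg_ls_discrete} with control $\psi^{\theta,\tau}$) as $\hat\theta=(V^{\psi^{\theta,\tau},\tau,m}+\tfrac1mI)^{-1}Y^{\psi^{\theta,\tau},\tau,m}$ and introduce the population discrete estimator $\hat\theta^\tau=(\sE[V^{\psi^{\theta,\tau},\tau}])^{-1}\sE[Y^{\psi^{\theta,\tau},\tau}]$ of \eqref{eq:ls_discrete_auxilary}. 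The decomposition $\hat\theta-\theta^\star=(\hat\theta-\hat\theta^\tau)+(\hat\theta^\tau-\theta^\star)$ reduces the problem to: (i) a statistical fluctuation term $|\hat\theta-\hat\theta^\tau|$, to be bounded by $\delta_m:=C\max\{\sqrt{(-\ln\delta)/m},(-\ln\delta)/m\}$ and $1/m$ with probability $1-2\delta$; and (ii) a deterministic discretization-bias term $|\hat\theta^\tau-\theta^\star|$, to be bounded by $CN^{-1}$, with all constants independent of $\theta\in\Theta$ and $N$.

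First I would establish that $\hat\theta^\tau$ is well defined with a uniformly controlled Gram inverse. By Proposition \ref{prop:riccati_conv_rate}, the feedback $K^{\theta,\tau}$, the fundamental matrix $\Phi^{\theta,\tau}$ and $(\Phi^{\theta,\tau})^{-1}$ are uniformly bounded over $\theta\in\Theta$, $N\in\sN$, and $K^{\theta,\tau}\to K^\theta$ uniformly in $t$ as $N\to\infty$; hence $\sE[V^{\psi^{\theta,\tau}}]\to\sE[V^{\psi^{\theta}}]$ uniformly on $\Theta$. Since $\|(\sE[V^{\psi^{\theta}}])^{-1}\|_2\le C_1$ on $\Theta$, there is $n_0$ with $\|(\sE[V^{\psi^{\theta,\tau}}])^{-1}\|_2\le 2C_1$ for all $\theta\in\Theta$, $N\ge n_0$; combining this with $\|\sE[V^{\psi^{\theta,\tau},\tau}]-\sE[V^{\psi^{\theta,\tau}}]\|_2\le CN^{-1}$ from Lemma \ref{lemma:discrete_expectation}, after enlarging $n_0$ one gets $\|(\sE[V^{\psi^{\theta,\tau},\tau}])^{-1}\|_2\le 3C_1$ uniformly.

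For the bias term (ii), the key observation is that $\psi^{\theta,\tau}$ is an affine-in-state feedback applied to the \emph{true} system \eqref{eq:sde_theta_pi}, so the computation leading to \eqref{rewrTheta} applies verbatim to $Z^{\psi^{\theta,\tau}}$ and yields $\sE[Y^{\psi^{\theta,\tau}}]=\sE[V^{\psi^{\theta,\tau}}]\theta^\star$, hence $\theta^\star=(\sE[V^{\psi^{\theta,\tau}}])^{-1}\sE[Y^{\psi^{\theta,\tau}}]$ once invertibility holds; in particular $\|\sE[Y^{\psi^{\theta,\tau}}]\|_2$ is bounded uniformly in $\theta\in\Theta$, $N\ge n_0$. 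Writing $\hat\theta^\tau-\theta^\star=(\sE[V^{\psi^{\theta,\tau},\tau}])^{-1}\sE[Y^{\psi^{\theta,\tau},\tau}]-(\sE[V^{\psi^{\theta,\tau}}])^{-1}\sE[Y^{\psi^{\theta,\tau}}]$, I would apply the resolvent identity $E^{-1}-F^{-1}=F^{-1}(F-E)E^{-1}$ together with the two bounds $\|\sE[V^{\psi^{\theta,\tau},\tau}]-\sE[V^{\psi^{\theta,\tau}}]\|_2+\|\sE[Y^{\psi^{\theta,\tau},\tau}]-\sE[Y^{\psi^{\theta,\tau}}]\|_2\le CN^{-1}$ from Lemma \ref{lemma:discrete_expectation}, obtaining $|\hat\theta^\tau-\theta^\star|\le CN^{-1}$.

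For the statistical term (i), I would repeat the argument of Proposition \ref{theta_conc} with $V^{\psi^{\theta,\tau},\tau,m}$, $Y^{\psi^{\theta,\tau},\tau,m}$ in place of $V^{\psi^{\theta},m}$, $Y^{\psi^{\theta},m}$: Theorem \ref{thm:concentration_ls} gives, with probability at least $1-2\delta$ and with constants independent of $\theta,N$, the bounds $\|V^{\psi^{\theta,\tau},\tau,m}-\sE[V^{\psi^{\theta,\tau},\tau}]\|_2\le\delta_m$ and $\|Y^{\psi^{\theta,\tau},\tau,m}-\sE[Y^{\psi^{\theta,\tau},\tau}]\|_2\le\delta_m$. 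On this event, for $m\ge\bar C_1(-\ln\delta)$ one has $\delta_m+1/m\le 1/(6C_1)$, so $\lambda_{\min}(V^{\psi^{\theta,\tau},\tau,m}+\tfrac1mI)\ge\lambda_{\min}(\sE[V^{\psi^{\theta,\tau},\tau}])-\delta_m-1/m\ge 1/(6C_1)$, i.e.\ $\|(V^{\psi^{\theta,\tau},\tau,m}+\tfrac1mI)^{-1}\|_2\le 6C_1$, and $\|Y^{\psi^{\theta,\tau},\tau,m}\|_2\le\|\sE[Y^{\psi^{\theta,\tau},\tau}]\|_2+\delta_m$ is bounded. Feeding these into the resolvent-identity bookkeeping exactly as in \eqref{eq:est} gives $|\hat\theta-\hat\theta^\tau|\le C(\delta_m+\delta_m^2+1/m)$. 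Combining with the bound from (ii) and substituting $\delta_m=C\max\{\sqrt{(-\ln\delta)/m},(-\ln\delta)/m\}$ yields the claimed estimate. The main obstacle is the uniformity of all constants over $\theta\in\Theta$ and $N\ge n_0$, specifically the simultaneous handling of the limit $\psi^{\theta,\tau}\to\psi^{\theta}$ and the $\cO(\tau)$ discretization error of the moment matrices when establishing a uniform lower bound on $\lambda_{\min}(\sE[V^{\psi^{\theta,\tau},\tau}])$, which is precisely what forces the threshold $n_0$; the concentration step and the resolvent bookkeeping are essentially identical to the continuous-time case.
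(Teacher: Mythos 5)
Your proposal is correct and follows essentially the same route as the paper: a uniform bound on the inverse Gram matrix for large $N$ via Proposition \ref{prop:riccati_conv_rate}, the $\cO(N^{-1})$ moment discrepancy from Lemma \ref{lemma:discrete_expectation}, the concentration bound of Theorem \ref{thm:concentration_ls}, and the resolvent bookkeeping of Proposition \ref{theta_conc}. The only (immaterial) difference is that you pivot explicitly through the population discrete estimator $\hat\theta^\tau$ of \eqref{eq:ls_discrete_auxilary} in two resolvent steps, whereas the paper's written proof compares $\hat\theta$ directly with $(\sE[V^{\psi^{\theta,\tau}}])^{-1}\sE[Y^{\psi^{\theta,\tau}}]$ in a single step and splits the deviations afterwards.
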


\begin{proof}
We first prove that there exists $n_0\in \sN$ such that for all $N\in \sN\cap [n_0,\infty)$ and $\theta\in \Theta$, 
$\|(\sE[V^{\psi^{{\theta,\tau}}}])^{-1}\|_2\le C$ for a  constant $C> 0$ independent of $\theta$ and $N$.
By \eqref{lin_dyn} and
\eqref{eq:sde_theta_pi}, we have
for all $\theta\in \Theta$ and $N\in \sN$,
$X^{\psi^{\theta}}_0= X^{\psi^{\theta},\tau}_0$ and 
$$
\d(X^{\psi^{\theta}}- X^{\psi^{\theta},\tau})_t
=
\Big((A^\star+B^\star K^{\theta}_t)(X^{\psi^{\theta}}- X^{\psi^{\theta},\tau})_t
+B^\star (K^{\theta}_t-K^{\theta,\tau}_t)  X^{\psi^{\theta},\tau}_t
\Big)\,\d t,
\q t\in [0,T].
$$
Proposition \ref{prop:riccati_conv_rate} shows 
$\|K^{\theta}_t-K^{\theta,\tau}_t\|_2\le CN^{-1}$
for all $t\in [0,T]$,
which along with Gronwall's inequality yields $\|X^{\psi^{\theta}}- X^{\psi^{\theta},\tau}\|_{\cS^2(\sR^n)}\le CN^{-1}$
for all $\theta\in \Theta$ and $N\in \sN$.
One can further prove that 
$\|U^{\psi^{\theta}}- U^{\psi^{\theta},\tau}\|_{\cS^2(\sR^d)}\le CN^{-1}$
with $U^{\psi^{\theta}}_t=\psi^{\theta}(t,X^{\psi^{\theta}}_t)$ and 
$U^{\psi^{\theta,\tau}}_t=\psi^{\theta,\tau}(t,X^{\psi^{\theta,\tau}}_t)$ for all $t\in [0,T]$.
Thus, we have 
$|\sE[V^{\psi^{{\theta}}}]-\sE[V^{\psi^{{\theta},\tau}}]|\le CN^{-1}$,
which along with $\|(\sE[V^{\psi^{{\theta}}}])^{-1}\|_2\le C_1$
implies a uniform bound of $\|(\sE[V^{\psi^{{\theta},\tau}}])^{-1}\|_2$ for all sufficiently large $N$.

Let us fix $N\in \sN\cap [n_0,\infty)$ and $\theta\in \Theta$ for the subsequent analysis.
The invertibility of $\sE[V^{\psi^{{\theta},\tau}}]$ implies that 
$\theta^\star=(\sE[V^{\psi^{{\theta},\tau}}])^{-1}\sE[Y^{\psi^{{\theta},\tau}}])$(cf.~\eqref{rewrTheta}). 
Then  by \eqref{reg_ls_discrete},
we can derive the following analogues of  
\eqref{eq:est}:
\begin{equation*}
\begin{split}
&\|\hat{\theta}-\theta^\star\|_2
\\
& = 
\|(V^{\psi^{{\theta,\tau}},\tau,m}+\tfrac{1}{m}I)^{-1}Y^{\psi^{{\theta,\tau}},\tau,m}
-(\sE[V^{\psi^{{\theta,\tau}}}])^{-1}\sE[Y^{\psi^{{\theta,\tau}}}]\|_2
\\
&\le 
\|(V^{\psi^{{\theta,\tau}},\tau,m}+\tfrac{1}{m}I)^{-1}-(\sE[V^{\psi^{{\theta,\tau}}}])^{-1}\|_2\|Y^{\psi^{{\theta},\tau},m,\tau}\|_2
%\\
%&\q 
+\|(\sE[V^{\psi^{{\theta,\tau}}}])^{-1}\|_2\|Y^{\psi^{{\theta,\tau},\tau},m}-\sE[Y^{\psi^{{\theta,\tau}}}]\|_2
\\
&\le
\|(V^{\psi^{{\theta,\tau}},\tau,m}+\tfrac{1}{m}I)^{-1}\|\|(\sE[V^{\psi^{{\theta,\tau}}}])^{-1}\|_2
\|Y^{\psi^{{\theta},\tau},m,\tau}\|_2
\|V^{\psi^{{\theta,\tau}},\tau,m}-\sE[V^{\psi^{{\theta,\tau}}}]+\tfrac{1}{m}I\|_2
\\
&\q 
+\|(\sE[V^{\psi^{{\theta,\tau}}}])^{-1}\|_2\|Y^{\psi^{{\theta,\tau},\tau},m}-\sE[Y^{\psi^{{\theta,\tau}}}]\|_2,
\end{split}
\end{equation*}
where $V^{\psi^{{\theta,\tau}},\tau,m}$ and 
$Y^{\psi^{{\theta,\tau}},\tau,m}$ are defined in 
\eqref{eq:rv_ls_psi_discrete}.
Note that 
\begin{align*}
\|V^{\psi^{{\theta,\tau}},\tau,m}-\sE[V^{\psi^{{\theta,\tau}}}]+\tfrac{1}{m}I\|_2
&\le
\|V^{\psi^{{\theta,\tau}},\tau,m}-
\sE[V^{\psi^{{\theta,\tau}},\tau}]\|_2
+\|\sE[V^{\psi^{{\theta,\tau}},\tau}]-
\sE[V^{\psi^{{\theta,\tau}}}]\|_2+\tfrac{1}{m},
\\
\|Y^{\psi^{{\theta,\tau},\tau},m}-\sE[Y^{\psi^{{\theta,\tau}}}]\|_2
&\le 
\|Y^{\psi^{{\theta,\tau},\tau},m}-
\sE[Y^{\psi^{{\theta,\tau},\tau}}]\|_2
+\|\sE[Y^{\psi^{{\theta,\tau},\tau}}]-
\sE[Y^{\psi^{{\theta,\tau}}}]\|_2,
\end{align*}
where for both inequalities,
the first term on the right-hand side
can be estimated by
Theorem \ref{thm:concentration_ls}
(uniformly in $N$),
and the second term
is of the magnitude $\cO(N^{-1})$
due to Lemma \ref{lemma:discrete_expectation}.
Hence, 
proceeding along the lines of the proof of 
Proposition \ref{theta_conc}
leads to  the desired result.
\end{proof}

\paragraph{Step 3: Proof of Theorem \ref{thm:regret_discrete}.}

%\begin{proof}[Proof of Theorem \ref{thm:regret_discrete}]
The proof follows from  similar arguments as that of Theorem  \ref{thm:regret_continuous},
and we only present the main steps here.
As 
% $B_0^\top QB_0,
% (B^\star)^\top QB^\star\in \sS^n_+$,
 (H.\ref{assum:ls}\ref{assum:non_degenerate}) holds with $\theta_0$ and 
 $\theta^\star$,
we can obtain from 
Propositions \ref{prop:non_degenerate}
and \ref{theta_conc_discrete}
 that,
 there exists
 a bounded set $\Phi_\eps\subset\sR^{(n+d)\t n}$ 
 and constants 
  $\bar{C}_1,\bar{C}_2\ge 1$, $n_0\in\sN$
  that 
for all $\theta\in  \Phi_\eps$,
$N\in \sN\cap [n_0,\infty)$
 and
 $\delta'\in (0,1/2)$, if 
$m\geq \bar{C}_1(-\ln\delta )$, 
 then   with probability at least $1-2\delta'$,
\bb 
|\hat{\theta}-\theta^\star|\leq \bar{C}_2\bigg(\sqrt{\frac{-\ln\delta'}{m}}+\frac{-\ln\delta' }{m}+\frac{(-\ln\delta')^{2}}{m^2}
+\frac{1}{N}
\bigg),
\ee
where 
$\hat{\theta}$ denotes the  right-hand side of \eqref{reg_ls_discrete}
 with the control $\psi^{\theta,\tau}$
 and stepsize $\tau=T/N$.
Then by proceeding along the lines of the proof of Theorem  \ref{thm:regret_continuous},
 there exists  $C_0>0$ and $n_0\in \sN$, such that for any given $\delta\in (0,\frac{3}{\pi^2})$,
if 
$m_0= C(-\ln \delta)$ with $C\ge C_0$ and $N_\ell\ge n_0$ for all $\ell\in \sN\cup\{0\}$,
then 
with   probability  at least $ 1-\frac{\pi^2}{3}\delta$,
\bb\label{eq:theta_ell_discrete}
|{\theta}_{\ell+1}-\theta^\star|\leq
 \bar{C}_2\bigg(\sqrt{\frac{-\ln\delta_\ell}{m_\ell}}+\frac{(-\ln\delta_\ell)}{m_\ell}+\frac{(-\ln\delta_\ell)^{2}}{m_\ell^2}+\frac{1}{N_\ell}\bigg),
\q  \fa \ell\in \sN\cup\{0\},
 \ee
 where $\delta_\ell=\delta/(\ell+1)^2$ and $m_\ell=2^\ell m_0$ for all $\ell$.
Consequently, we can conclude the desired regret  bound from Proposition 
\ref{prop:performance_gap_discrete}
(cf.~\eqref{eq:R_N_conts}),
with an additional term 
$\sum_{\ell=0}^{\ln M}m_\ell N_\ell^{-2}$ 
due to the time discretization errors in \eqref{item:performance_gap_discrete}
and \eqref{eq:theta_ell_discrete}.
%\end{proof}

%\bibliographystyle{plain}
\bibliography{ref_LQ_RL.bib}

\end{document}